\newtheorem{theorem}{Theorem}
\newtheorem{lemma}{Lemma}
\newtheorem{definition}{Definition}
\newtheorem{remark}{Remark}
\newtheorem{corollary}{Corollary}
\newcommand{\vect}[1]{\accentset{\rightharpoonup}{#1}}
\newcommand{\harpoon}{\overset{\rightharpoonup}}
\newcommand{\ppp}{\[\begin{aligned}}
\newcommand{\ooo}{\end{aligned}\]}
\begin{document}

\title{On the set of fixed points for NRS($m$)}
\author{Mario DeFranco}
\maketitle

\abstract{Let $f(z)$ be a degree $d$ polynomial with zeros $z_i$. For arbitrary $m$ we construct explicit set of fixed points (attractors) of NRS($m$), and prove a factored formula for the Jacobian at these points.  We prove that if NRS(2), when applied to $f$ with an arbitrary starting point, converges to a point $(w_0, w_1)$, then $w_0$ is of the form $z_i+z_j$ for some $i \neq j$. As a corollary, we prove a formula expressing the  elementary symmetric expansion of the function 
\[
\prod_{1\leq i < j \leq d} (z - z_i -z_j)
\] 
in the variables $z_i$ in terms of non-intersecting paths on certain directed graphs, using the Lindstr\"om-Gessel-Veinnot Lemma. }

\section{Introduction} 

Let $m\geq1$ be an integer. We prove some results about NRS($m$), the generalization of the Newton-Raphson-Simpson method defined in \cite{DeFranco 1}. We will use the same notation specified in that paper unless otherwise specified. 

Suppose $a_i \in \mathbb{C}$ and let $z_i$ denote the zeros of a degree $d$ polynomial $f(z)$:
\begin{align*}
f(z) =& \sum_{i=0}^d a_i z^i \\ 
=& a_0 \prod_{i=1}^d (1-\frac{z}{z_i})
\end{align*}
where we assume $a_0\neq 0$, $z_i \neq 0$ and  $a_m \neq 0$. Let $\mathrm{attractors}(m) \subset \mathbb{C}^m$ denote the set of attractors (fixed points) of NRS($m$).  
In Section \ref{subset}, we explicitly construct a subset $V(m) \subset \mathrm{attractors}(m)$. The subset $V(m)$ is indexed by subsets of size $m$ of 
\[
\{ z_1, \ldots, z_d\},
\]
so generically
\[
|V(m)| = {d \choose m}.
\]
Each point in $V(m)$ has coordinates that are Laurent poynomials in $z_i$. 
 
 In Section \ref{Jacobian value}, we explicitly construct a factored expression for the value of the Jacobian of $\harpoon{f}$ evaluated at each point in $V(m)$. It is well-known that the non-vanishing of this Jacobian implies quadratic convergence of the multi-dimensional Newton-Raphson-Simpson method. 
 
 Let $V(0,m)$ denote the set of the first coordinates of the points in $V(m)$, i.e. if 
 \[
 (\alpha_0, \ldots, \alpha_{m-1}) \in V(m) 
 \]
 then 
 \[
 \alpha_0 \in V(0,m).
 \]
 Similarly define $ \mathrm{attractors}(0,m)$.
In Section \ref{set eq}, we prove that 
\[
V(0,2) =  \mathrm{attractors}(0,2)
\]
by proving that there exists a degree-${d \choose 2}$ polynomial $g(x_0)$ in the ideal generated by $f_{0,2}(x_0, x_1) - x_0$ and $f_{1,2}(x_0, x_1)-x_1$. We express the coefficients in the expansion of $g(x_0)$ using non-intersecting (vertex-disjoint) paths in certain directed graphs using the Viennot-Lindstr\"om-Gessel Lemma. 

In Section \ref{related} we prove some more results about these graphs with vertex-disjoint paths. 

\section{$V(m) \subset \mathrm{attractors}(m)$} \label{subset}

\begin{definition}
For integer $k \geq 0$, 
let $e_k(t_1, \ldots, t_n)$ denote the elementary symmetric function 
\[
e_k(t_1, \ldots, t_r) = \sum_{1\leq i_1 < \ldots < i_k \leq r} \prod_{j=1}^k t_{i_j}
\] 
and  $h_k(t_1, \ldots, t_r)$ the complete homogeneous symmetric function
\[
h_k(t_1, \ldots, t_n) = \sum_{1\leq i_1 \leq \ldots \leq i_k \leq r} \prod_{j=1}^k t_{i_j}.
\] 
If $k >r$, then
\[
e_k(t_1, \ldots, t_r) =0. 
\]
Let $C(n,k)$ denote the set of compositions of $n$ into $k$ positive integer parts. 
For $c \in C(n,k)$, let $|c|$ denote $k$.
Let $e_c(t_1, \ldots, t_n)$ denote
\[
e_c(t_1, \ldots, t_n) = \prod_{i=1}^{|c|} e_{c(i)}(t_1, \ldots, t_n)
\]
and similarly for $h_c(t_1, \ldots, t_n)$. 
Define 
\[
\alpha_0 = \sum_{i=1}^m z_i
\]
and for $1 \leq l \leq m-1$
\[
\alpha_l = h_{l-1}(\frac{1}{z_{m+1}}, ..., \frac{1}{z_d}) (-\frac{a_{m-1}}{a_m})^l \frac{a_m}{a_0}(-1)^m\prod_{i=1}^m z_i . 
\]
 Permuting the $z_i$ in the expression for $\harpoon{\alpha}$ generically results in ${d \choose m}$ points. Call the resulting set of points $V(m)$. 
\end{definition}

We claim that 
\[
\alpha=(\alpha_0, \ldots, \alpha_{m-1}) \in \mathrm{attractors}(m)
\]
 i.e. 
\[
\vect{f}_m(\alpha) = \vect{0}
\]

\begin{remark}
We will use for $0 \leq i \leq d$
\[
(-1)^i\frac{a_i}{a_0} = e_i(\frac{1}{z_1}, \ldots, \frac{1}{z_d}).
\]
\end{remark}

\begin{lemma} \label{identities}
For integer $i \geq 0$ and composition $c$, we let $e_i$ and $e_c$ denote $e_i(t_1, \ldots, t_j)$ and $e_c(t_1, \ldots, t_n)$, and similarly for $h_i$ and $h_c$. 

\begin{equation} \label{e h}
\sum_{i=0}^n (-1)^i h_i e_{n-i} = 
\begin{cases}
1  \text{ if } n=0 \\ 
0  \text{ if } n > 0.
\end{cases}
\end{equation}

\begin{equation} \label{ec}
\sum_{c \in C(n)} (-1)^{|c|}e_{c}(\vect{t})  = (-1)^n h_n(\vect{t})
\end{equation}
and 
\begin{equation} \label{hc}
\sum_{c \in C(n)} (-1)^{|c|} h_c(\vect{t})  =(-1)^n e_n(\vect{t})
\end{equation}
\end{lemma}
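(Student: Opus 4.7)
The plan is to prove all three identities within a single generating-function framework. I first introduce the standard generating series
\[
E(x) = \sum_{i \geq 0} e_i(\vec t)\, x^i = \prod_{j} (1 + t_j x), \qquad H(x) = \sum_{i \geq 0} h_i(\vec t)\, x^i = \prod_{j} \frac{1}{1 - t_j x}.
\]
From the product expansions, the fundamental relation $E(-x)\, H(x) = 1$ follows by term-by-term cancellation in $\prod_j (1 - t_j x) \cdot \prod_j (1-t_j x)^{-1}$.

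Identity (\ref{e h}) then drops out immediately: reading off the coefficient of $x^n$ on both sides of $H(x)\, E(-x) = 1$ gives $\sum_{i=0}^n (-1)^i h_i\, e_{n-i}$ on the left and $1$ if $n=0$, $0$ otherwise on the right.

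For identities (\ref{ec}) and (\ref{hc}) I would recognize the composition sums as geometric series. Setting $\widetilde{E}(x) = E(x) - 1 = \sum_{i \geq 1} e_i x^i$, a composition $c = (c_1, \ldots, c_k)$ of $n$ into $k$ positive parts contributes to $[x^n]\widetilde E(x)^k$ exactly the monomial $e_{c_1} \cdots e_{c_k}$, so
\[
\sum_{c \in C(n)} (-1)^{|c|} e_c \;=\; [x^n] \sum_{k \geq 1} \bigl(-\widetilde E(x)\bigr)^{k} \;=\; [x^n] \left( \frac{1}{E(x)} - 1 \right).
\]
Since $1/E(x) = \prod_j (1 + t_j x)^{-1} = H(-x)$, its $x^n$ coefficient is $(-1)^n h_n$, and for $n \geq 1$ the constant $-1$ contributes nothing, yielding (\ref{ec}). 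Identity (\ref{hc}) follows by the symmetric argument with the roles of $E$ and $H$ interchanged, using the dual relation $1/H(x) = E(-x)$.

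None of the steps poses real difficulty; the whole lemma reduces to the single fact that $E(x)$ and $H(-x)$ are multiplicative inverses in $\mathbb{C}[\![x]\!]$. The only bookkeeping subtlety is the $n=0$ edge case in (\ref{ec}) and (\ref{hc}), which requires adopting the convention that $C(0)$ contains the empty composition (with $|c|=0$ and empty product $1$), so that the composition sum evaluates to $1 = h_0 = e_0$ there.
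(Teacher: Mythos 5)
Your proof is correct, and for identity \eqref{e h} it matches the paper's approach essentially verbatim: both express the statement as the coefficient of $x^n$ in the product of the $e$- and $h$-generating series, which telescopes to $1$.

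For identities \eqref{ec} and \eqref{hc} you take a genuinely different route. The paper dispatches these in one line, asserting that they ``follow from \eqref{e h} by straightforward induction'' (splitting a composition of $n$ off its first part $c(1)=j$, applying the inductive hypothesis to the tail, and then invoking \eqref{e h} to collapse the resulting sum $\sum_{j=1}^n(-1)^j e_j h_{n-j}=-h_n$). You instead stay inside the generating-function framework: you note that $\sum_{k\geq 1}\bigl(-\widetilde E(x)\bigr)^k = 1/E(x)-1 = H(-x)-1$ and read off $[x^n]$. This is cleaner and more self-contained --- it derives \eqref{ec} directly from the same multiplicative relation that gave \eqref{e h}, without a separate induction --- and it makes transparent why the sum over all composition lengths must collapse to a single term. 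The paper's inductive route has the minor advantage of literally using \eqref{e h} as a black box without introducing the auxiliary series $\widetilde E$; yours has the advantage of needing no induction at all. Your remark on the $n=0$ convention for $C(0)$ is a real subtlety that the paper glosses over, and flagging it is appropriate.

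Both proofs are valid; the difference is one of exposition rather than substance, since both rest on the single fact that $E(x)$ and $H(-x)$ are inverses in $\mathbb{C}[\![x]\!]$.
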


\begin{proof}
We first prove identity \eqref{e h}. We fix $n$ and let 
\[
e_i = e_i(t_1, \ldots, t_n) \text{ and } h_i = h_i(t_1, \ldots, t_n).
\]
The generating function for $e_i$ is
\begin{align*}
\prod_{i=1}^n (1-x t_i) &= \sum_{i=1}^n (-1)^i x^i e_i \\ 
= \sum_{i=1}^\infty (-1)^i x^i e_i
\end{align*}
where we have used $e_i =0$ for $i >n$.
The reciprocal of the generating function for $e_i$ is the generating function for $h_i$
\[
(\prod_{i=1}^n (1-x t_i))^{-1} = \sum_{i=0}^\infty  x^i h_i
\] 
where we use
\[
(1-x t_i)^{-1} = \sum_{j=0}^\infty x^j t_i^j. 
\]
Multiplying these two generating functions yields 
\[
1  = \sum_{j=0}^\infty x^j \sum_{i=0}^j (-1)^i e_i h_{j-i}.
\]
Comparing the coefficients of $x^j$ proves \eqref{e h}. 

Identities \eqref{ec} and \eqref{hc} follow from \eqref{e h} by straightforward induction. This completes the proof.
\end{proof}

\begin{lemma} \label{e z prod}
For $n \leq m$:
\[
e_n(\frac{1}{z_{1}}, ..., \frac{1}{z_d}) \prod_{i=1}^m z_i =\sum_{i=0}^n e_{m-n+i}(z_1, ..., z_m)e_{i}(\frac{1}{z_{m+1}}, ..., \frac{1}{z_d})
\]
and for $n \geq m$:
\[
e_n(\frac{1}{z_{1}}, ..., \frac{1}{z_d}) \prod_{i=1}^m z_i =\sum_{i=0}^m e_{i}(z_1, ..., z_m)e_{n-m+i}(\frac{1}{z_{m+1}}, ..., \frac{1}{z_d})
\]
\end{lemma}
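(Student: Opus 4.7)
The plan is to rewrite $e_n(\tfrac{1}{z_1}, \ldots, \tfrac{1}{z_d})$ by splitting the index set $\{1, \ldots, d\}$ into $\{1, \ldots, m\}$ and $\{m+1, \ldots, d\}$, then use the elementary identity that multiplying an elementary symmetric function of reciprocals by the product of all the variables gives a complementary elementary symmetric function. Concretely, I would first write
\[
e_n\left(\tfrac{1}{z_1},\ldots,\tfrac{1}{z_d}\right) = \sum_{k+l=n} e_k\left(\tfrac{1}{z_1},\ldots,\tfrac{1}{z_m}\right) e_l\left(\tfrac{1}{z_{m+1}},\ldots,\tfrac{1}{z_d}\right),
\]
which just records the choice of how many of the $n$ indices fall in each block.

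Next I would observe the duality
\[
\left(\prod_{j=1}^m z_j\right) e_k\left(\tfrac{1}{z_1},\ldots,\tfrac{1}{z_m}\right) = e_{m-k}(z_1, \ldots, z_m),
\]
valid for $0 \leq k \leq m$, which follows termwise: a subset $S \subset \{1, \ldots, m\}$ of size $k$ contributes $\prod_{j \in S} z_j^{-1} \prod_{j=1}^m z_j = \prod_{j \notin S} z_j$, and summing over $S$ reproduces $e_{m-k}(z_1, \ldots, z_m)$. Multiplying the split identity by $\prod_{i=1}^m z_i$ then gives
\[
\left(\prod_{i=1}^m z_i\right) e_n\left(\tfrac{1}{z_1},\ldots,\tfrac{1}{z_d}\right) = \sum_{k+l=n} e_{m-k}(z_1,\ldots,z_m)\, e_l\left(\tfrac{1}{z_{m+1}},\ldots,\tfrac{1}{z_d}\right).
\]

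Finally I would handle the two cases by reindexing. For $n \leq m$, the sum runs over $0 \leq k \leq n$ (since we also need $l = n-k \geq 0$), and substituting $i = l = n-k$ gives $m - k = m - n + i$, producing the first stated formula. For $n \geq m$, the constraint $k \leq m$ becomes binding (terms with $k > m$ vanish because $e_k$ of $m$ variables is zero), so the sum runs over $0 \leq k \leq m$; substituting $i = m-k$ gives $l = n - m + i$ and produces the second stated formula. No step here is really an obstacle; the only thing to watch is the case split and the reindexing conventions, which is why the lemma is stated with two separate formulas rather than one.
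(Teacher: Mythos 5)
Your proof is correct and fills in exactly the computation the paper leaves implicit; the paper's own proof is the one-liner ``expand the left side and collect terms,'' and your block-splitting of $e_n$, the duality $\bigl(\prod_{j=1}^m z_j\bigr) e_k(\tfrac{1}{z_1},\ldots,\tfrac{1}{z_m}) = e_{m-k}(z_1,\ldots,z_m)$, and the reindexing are precisely what ``expanding and collecting'' amounts to. No substantive difference in approach.
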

\begin{proof}
This follows from expanding the left sides and collecting terms. 
\end{proof} 

\begin{lemma} \label{block expr alpha}
For integer $k \geq 1$,
 \[
 \mathrm{expr}_m(\alpha;k) = (-1)^{k-1} e_{k}(z_1,...,z_m). 
 \]
\end{lemma}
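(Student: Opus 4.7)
The plan is to unwind the definition of $\mathrm{expr}_m(\alpha;k)$ from \cite{DeFranco 1}, substitute the explicit closed forms for $\alpha_0, \ldots, \alpha_{m-1}$, and simplify using the symmetric-function identities just established. The first move is to rewrite each polynomial coefficient $a_i$ that appears via the remark's identity $(-1)^i a_i / a_0 = e_i(1/z_1,\ldots,1/z_d)$, so that $\mathrm{expr}_m(\alpha;k)$ becomes a purely symmetric-function expression in the $z_i$. I would then apply Lemma \ref{e z prod} to each factor of the form $e_n(1/z_1,\ldots,1/z_d)\prod_{i=1}^m z_i$, splitting it into a sum of products $e_i(z_1,\ldots,z_m)\, e_j(1/z_{m+1},\ldots,1/z_d)$. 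This cleanly separates the contribution of the ``selected'' variables $z_1,\ldots,z_m$ from that of the ``remaining'' variables $z_{m+1},\ldots,z_d$.

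Next, substituting $\alpha_0 = e_1(z_1,\ldots,z_m)$ together with the product form $\alpha_l = h_{l-1}(1/z_{m+1},\ldots,1/z_d)\cdot C_l$ (where $C_l = (-a_{m-1}/a_m)^l (a_m/a_0)(-1)^m \prod_{i=1}^m z_i$ is a scalar depending only on the $a_i$ and $\prod_{i=1}^m z_i$), any product of $\alpha_l$'s contributes a factor $h_c(1/z_{m+1},\ldots,1/z_d)$ for an appropriate composition $c$. Pairing these $h_c$ factors with the $e_j(1/z_{m+1},\ldots,1/z_d)$ factors produced in the previous step, and expanding the $h_c$ (or $e_c$) via \eqref{ec} and \eqref{hc}, one gets sums of the shape $\sum_i (-1)^i h_i e_{n-i}$ in the variables $1/z_{m+1},\ldots,1/z_d$, which by identity \eqref{e h} vanish unless $n=0$. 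This is the mechanism by which all dependence on the non-selected roots, as well as on the coefficients $a_i/a_0$, is expected to disappear.

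The main obstacle is purely combinatorial bookkeeping: one must identify precisely which term survives the cascade of cancellations and verify it equals $(-1)^{k-1} e_k(z_1,\ldots,z_m)$. I expect the surviving contribution to come from a single distinguished index pairing in which the $\sum_i (-1)^i h_i e_{n-i}$ sum collapses to the $n=0$ case and contributes $1$, leaving exactly an $e_k(z_1,\ldots,z_m)$ factor after all the scalar factors $C_l$ and $\prod z_i$ have multiplied out. Reconstructing the sign $(-1)^{k-1}$ requires careful tracking through three layers: the $(-1)^m$ and $(-1)^l$ factors in the definition of $\alpha_l$, the alternating signs introduced by \eqref{e h} and \eqref{ec}/\eqref{hc}, and the index shift in Lemma \ref{e z prod}. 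This sign-tracking, rather than the conceptual structure, is the delicate part of the argument.
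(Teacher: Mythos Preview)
Your plan is essentially the paper's proof: substitute the $\alpha_l$, convert each $a_i/a_0$ to an elementary symmetric function in the $1/z_j$, apply Lemma~\ref{e z prod} to split off the $z_1,\ldots,z_m$ part, and collapse the remaining $h$--$e$ pairing in $1/z_{m+1},\ldots,1/z_d$ via identity~\eqref{e h}. One simplification you are missing: for $k\geq 2$ the quantity $\mathrm{expr}_m(x;k)$ is \emph{linear} in the $x_i$ (its $x_i$-coefficient is $(-a_{m-1}/a_m)^{-i}(-a_{m-1-i-(k-2)}/a_m)$), so after substituting $\vec\alpha$ each term carries only a single factor $h_{\bullet}(1/z_{m+1},\ldots,1/z_d)$, not a product $h_c$; consequently identities~\eqref{ec} and~\eqref{hc} are unnecessary here and the whole argument reduces to one clean application of~\eqref{e h}, with the surviving $j=0$ term giving $(-1)^{k-1}e_k(z_1,\ldots,z_m)$ directly.
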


\begin{proof} 
When $k=1$, we have 
\[
 \mathrm{expr}_m(x;1) = x_0.
\]
Evaluating at $\vect{\alpha}$ and applying the definition gives
\[
\alpha_0 = \sum_{i=1}^m z_i
\]
which is $e_1(z_1,...,z_m)$. 

For $k \geq 2$, we have by definition
\begin{align*}
 \mathrm{expr}_m(\vect{\alpha};k)  &=  \sum_{h=k-1}^{m-1}\mathrm{expr}_m(\vect{\alpha}; k,h)\\ 
 & =  \sum_{h=k-1}^{m-1} h_{h-(k-2)-1}(\frac{1}{z_{m+1}}, ..., \frac{1}{z_d} )(-1)^{m+1} \frac{a_{m-1-h}}{a_0} \prod_{i=1}^m z_i. \\
\end{align*}
We apply Lemma \ref{e z prod} and re-arrange this to obtain 
\begin{align*}
\sum_{j=0}^{m-k} (-1)^{k-1}e_{k+j}(z_1,...,z_m) \sum_{i=0}^j (-1)^i h_i(\frac{1}{z_{m+1}}, ..., \frac{1}{z_d} ) e_{j-i}(\frac{1}{z_{m+1}}, ..., \frac{1}{z_d} ).
\end{align*}
Applying identity \eqref{e h} yields 
\[
 (-1)^{k-1}e_{k}(z_1,...,z_m).
\]
This completes the proof. 
\end{proof}

\begin{lemma} \label{PT eval}
For $2\leq s \leq d-m+1$,
 \begin{align*}
\mathrm{PT}_m(\vect{\alpha};s) &= (-1)^{s-m} \frac{a_0 e_{s-1}(\frac{1}{z_{m+1}}, ..., \frac{1}{z_d} )}{a_m \prod_{i=1}^m z_i} \\
\mathrm{PT}_m(\vect{\alpha};1) &= 1- (-1)^m \frac{a_0}{a_m \prod_{i=1}^m z_i}\\
\mathrm{PT}_m(\vect{\alpha};0) &= \frac{a_{m-1}}{a_m} + \sum_{i=1}^m z_i 
 \end{align*} 
 
\end{lemma}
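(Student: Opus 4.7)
The plan is to unfold $\mathrm{PT}_m(\vect{\alpha};s)$ using the definition from \cite{DeFranco 1} and reduce the resulting sums via Lemma \ref{e z prod} and identity \eqref{e h}, mirroring the strategy used in the proof of Lemma \ref{block expr alpha}.

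First I would write $\mathrm{PT}_m(\vect{\alpha};s)$ as a double sum: an outer sum over the partial-tail index running up to the degree $d$ of $f$, and an inner sum (inherited from the coordinates $\alpha_l$) over the $h$-factors in the variables $1/z_{m+1},\ldots,1/z_d$ weighted by powers of $-a_{m-1}/a_m$. Each summand contains a coefficient ratio of the form $a_j/a_0$, which by the remark equals $(-1)^j e_j(1/z_1,\ldots,1/z_d)$. At this stage the entire expression is a sum of terms of the shape
\[
(-1)^{?}\, h_{\cdot}\!\left(\tfrac{1}{z_{m+1}},\ldots,\tfrac{1}{z_d}\right)\, e_{\cdot}\!\left(\tfrac{1}{z_1},\ldots,\tfrac{1}{z_d}\right)\,\textstyle\prod_{i=1}^m z_i,
\]
with an overall power of $a_{m-1}/a_m$ and $a_m/a_0$.

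Next I would apply Lemma \ref{e z prod} to split each $e_j(1/z_1,\ldots,1/z_d)\prod_{i=1}^m z_i$ into a sum of products $e_k(z_1,\ldots,z_m)\,e_{\cdot}(1/z_{m+1},\ldots,1/z_d)$. After interchanging the order of summation and collecting terms by the index on $e_k(z_1,\ldots,z_m)$, the inner sum over the $h$--$e$ pairing in the variables $1/z_{m+1},\ldots,1/z_d$ becomes precisely an alternating convolution of the form $\sum_i (-1)^i h_i e_{n-i}$. By identity \eqref{e h} this convolution vanishes except when $n=0$, at which point it equals $1$. The surviving contribution then assembles into $(-1)^{s-m}\,a_0 e_{s-1}(1/z_{m+1},\ldots,1/z_d)/(a_m\prod z_i)$, matching the claim for the generic range $2\leq s\leq d-m+1$.

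The cases $s=1$ and $s=0$ need separate bookkeeping because the partial-tail sum has boundary summands that are not fully telescoped by \eqref{e h}. For $s=1$, the $n=0$ case of \eqref{e h} contributes the constant $1$, while the remaining terms assemble into $-(-1)^m a_0/(a_m\prod z_i)$, giving the asserted formula. For $s=0$, an extra boundary summand arises from the top-degree terms of $f$ that escapes the $h$--$e$ cancellation, producing exactly the additional $a_{m-1}/a_m$; the coordinate $\alpha_0 = \sum z_i$ then enters as a direct, uncancelled piece.

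The main obstacle will be matching sign conventions: the factor $(-1)^{s-m}$ on the right-hand side and the alternating signs from the $e_j \leftrightarrow a_j/a_0$ conversion need to be tracked carefully through the re-indexing induced by Lemma \ref{e z prod}, and the asymmetry between the two regimes $n\leq m$ and $n\geq m$ of that lemma must be respected when the summation index crosses $m$. Once this bookkeeping is set up, the collapse to the claimed closed forms is a direct consequence of \eqref{e h}.
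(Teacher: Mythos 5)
Your outline captures the final cancellation step of the paper's argument correctly, but it misdescribes the structure you need to unfold and omits the two lemmas that make that unfolding tractable. In the paper, $\mathrm{PT}_m(\vect{\alpha};s)$ is not a simple double sum: it is a sum over compositions $c$ of products $\prod_j \mathrm{expr}_m(\vect{\alpha};c(j))$. The first thing the paper does is \emph{apply the result of} Lemma \ref{block expr alpha} (not just mirror its strategy) to replace each $\mathrm{expr}_m(\vect{\alpha};c(j))$ with $(-1)^{c(j)-1}e_{c(j)}(z_1,\ldots,z_m)$, and then apply identity \eqref{ec} to collapse the resulting composition sum $\sum_{c\in C(i)}(-1)^{|c|}e_c$ into $(-1)^i h_i(z_1,\ldots,z_m)$. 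Only after this reduction is there a clean alternating $e$--$h$ convolution for \eqref{e h} to annihilate.

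Your proposal instead substitutes the $\alpha_l$ formulas directly and describes the result as an inner sum over ``$h$-factors in $1/z_{m+1},\ldots,1/z_d$ weighted by powers of $-a_{m-1}/a_m$.'' Those $h$-factors live inside each individual coordinate $\alpha_l$, but what enters $\mathrm{PT}_m$ is $\mathrm{expr}_m$, which is itself a sum over $h$ of rescaled coordinates, and then a product of those over the parts of a composition. Bypassing Lemma \ref{block expr alpha} leaves you with a nested sum-of-products whose alternating structure is not visible, and identity \eqref{e h} cannot be applied directly; identity \eqref{ec} is also nowhere in your plan, even though it is what converts the composition sum into the single $h_i(z_1,\ldots,z_m)$ that later pairs against the $e$'s supplied by Lemma \ref{e z prod}. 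Your treatment of the main range $s\geq 2$ and the boundary cases $s=1,0$ is in the right spirit (one term escaping the $h$--$e$ cancellation contributes the extra constant or $a_{m-1}/a_m$), and the use of Lemma \ref{e z prod} and \eqref{e h} is exactly what the paper does at the end; the gap is entirely in the front end, where the composition-sum structure must be tamed by Lemma \ref{block expr alpha} and identity \eqref{ec} before the convolution argument can start.
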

\begin{proof} 
For $0\leq s \leq d-m+1$, by definition

\begin{align}
\mathrm{PT}(\alpha,s)&=\sum_{i=0}^{d-m-s+1} -\frac{a_{m-1+i+s}}{a_m}\mathbf{1}(i+s \geq 2)\sum_{c \in C(i)}  \prod_{j=1}^{\mathrm{length}(c)} \mathrm{expr}(\alpha;c(j)) \nonumber\\ 
                     &= \sum_{i=0}^{d-m-s+1} -\frac{a_{m-1+i+s}}{a_m}\mathbf{1}(i+s \geq 2)\sum_{c \in C(i)}  \prod_{j=1}^{\mathrm{length}(c)}(-1)^{c(j)-1} e_{c(j)}(z_1,...,z_m) \label{ref block expr alpha} \\ 
                     &= \sum_{i=0}^{d-m-s+1} -\frac{a_{m-1+i+s}}{a_m}\mathbf{1}(i+s \geq 2) (-1)^i\sum_{c \in C(i)}  (-1)^{|c|} e_{c}(z_1,...,z_m) \nonumber\\  
                     &= \sum_{i=0}^{d-m-s+1} -\frac{a_{m-1+i+s}}{a_m}\mathbf{1}(i+s \geq 2)h_{i}(z_1,...,z_m)     \label{ref ec}
\end{align}
where at line \eqref{ref block expr alpha} we use Lemma \ref{block expr alpha} and at line \eqref{ref ec} we have used identity \eqref{ec}.

Multiplying through by $\displaystyle  \prod_{i=1}^m z_i $ and applying Lemma \ref{e z prod} gives
\begin{align*}
 &\mathrm{PT}_m(\vect{\alpha};s) \prod_{i=1}^m z_i \\ 
 &= \frac{-a_0}{a_m}\sum_{j=0}^{d-m-s+1} (-1)^{m-1+s+j} \mathbf{1}(s+j \geq 2) h_{j}(z_1, ..., z_m) \sum_{i=0}^m e_i(z_1, ..., z_m)e_{s-1+i+j}(\frac{1}{z_{m+1}}, ..., \frac{1}{z_d} ).   
\end{align*} 

Re-arranging gives 
\[
(-1)^{s-m}\frac{a_0}{a_m} \sum_{j=0}^{d-m-s+1} e_{s-1+j}(\frac{1}{z_{m+1}}, ..., \frac{1}{z_d}) \sum_{i=0}^j (-1)^i h_i(z_1, ..., z_m)e_{j-i}(z_1, ..., z_m) \mathbf{1}(s+i \geq 2). 
\]
When $s \geq 2$, we apply identity \eqref{e h} to see that the sum reduces to  
\[
(-1)^{s-m}\frac{a_0}{a_m} e_{s-1}(\frac{1}{z_{m+1}}, ..., \frac{1}{z_d}). 
\]

When $s=1$, we have from line \eqref{ref ec}
\begin{align*}
 \mathrm{PT}_m(\vect{\alpha};1) &= \sum_{j=0}^{d-m-1+1}  (-\frac{a_{m-1+1+j}}{a_m}) h_{j}(z_1, ..., z_m)\mathbf{1}(1+j \geq 2)\\ 
 & =  -(-\frac{a_{m-1+1+0}}{a_m}) h_{0}(z_1, ..., z_m)+\sum_{j=0}^{d-m-1+1}  (-\frac{a_{m-1+1+j}}{a_m}) h_{j}(z_1, ..., z_m)\\ 
\end{align*}
which from the same reasoning above is equal to 
\[
1 - \frac{a_0}{(-1)^m a_m \prod_{i=1}^m z_i}.
\] 

Now for $s=0$, we have from above
\begin{align*}
 &\mathrm{PT}_m(\vect{\alpha};0) \prod_{i=1}^m z_i \\ 
 &= \frac{-1}{a_m}\sum_{j=2}^{d-m+1} (-1)^{m-1+j} h_{j}(z_1, ..., z_m) \sum_{i=0}^m e_i(z_1, ..., z_m)e_{-1+i+j}(\frac{1}{z_{m+1}}, ..., \frac{1}{z_d} ).   
\end{align*} 
Adding and subtracting the terms 
\[
(\frac{-1}{a_m}\prod_{i=1}^m z_i)(-\frac{a_{m-1+1}}{a_m})h_1(z_1,...,z_m) +\frac{(-1)^m}{a_m}(\prod_{i=1}^m z_i)\sum_{i=0}^{m-1} e_{i+1}(z_1, ..., z_m)e_i(\frac{1}{z_{m+1}}, ..., \frac{1}{z_d} )
\]
and re-arranging yields
\begin{align*}
&\frac{(-1)^{m}}{a_m} \sum_{j=1}^{d-m+1} e_{-1+j}(\frac{1}{z_{m+1}}, ..., \frac{1}{z_d}) \sum_{i=0}^j (-1)^i h_i(z_1, ..., z_m)e_{j-i}(z_1, ..., z_m) \mathbf{1}(s+i \geq 2)\\ 
& -(-\frac{a_{m-1+1}}{a_m})h_1(z_1,...,z_m)(\prod_{i=1}^m z_i)  - (-1)^{m} \frac{(-1)^{m-1}a_{m-1}}{a_m} (\prod_{i=1}^m z_i).
\end{align*}
Applying identity \eqref{e h} and simplifying yield
\[
(\prod_{i=1}^m z_i)( \frac{a_{m-1}}{a_m}+\sum_{i=1}^m z_i ) .
\]
This completes the proof.
\end{proof} 

\begin{theorem} \label{t alpha}
\[
\vect{f}_m(\vect{\alpha}) =  (0,\ldots, 0).
\]
\end{theorem}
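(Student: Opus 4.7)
The plan is to expand each component $f_{l,m}(\vect{\alpha})$ using the definition of $\vect{f}_m$ from \cite{DeFranco 1} in terms of $\mathrm{PT}_m$ and $\mathrm{expr}_m$, and then substitute the explicit evaluations proved in Lemmas \ref{block expr alpha} and \ref{PT eval}. That is, once we rewrite each coordinate of $\vect{f}_m$ at $\vect{\alpha}$ purely in terms of $\mathrm{PT}_m(\vect{\alpha}; s)$ and $\mathrm{expr}_m(\vect{\alpha}; k)$, no further reference to the original iteration is needed.

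First I would use Lemma \ref{PT eval} to replace each $\mathrm{PT}_m(\vect{\alpha}; s)$ by its closed form, giving an expression in $a_0, a_{m-1}, a_m$, $\prod_{i=1}^m z_i$, and $e_{s-1}(1/z_{m+1}, \ldots, 1/z_d)$. The definition of $\alpha_l$ contains precisely the complementary factors $(a_m/a_0)(-1)^m \prod z_i$ and $(-a_{m-1}/a_m)^l$, so each product $\alpha_l \cdot \mathrm{PT}_m(\vect{\alpha}; s)$ for $s \geq 2$ collapses to a clean combination of $h_{l-1}$ times $e_{s-1}$, both evaluated in $1/z_{m+1}, \ldots, 1/z_d$. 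The contribution of $\mathrm{expr}_m(\vect{\alpha}; k)$ pieces is handled by Lemma \ref{block expr alpha}, which turns them into $\pm e_k(z_1, \ldots, z_m)$.

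Next I would collect terms coordinate by coordinate. The edge cases $s = 0$ and $s = 1$ supply the shift $a_{m-1}/a_m + \sum z_i$ and the constant $1$, which account for the coordinate $\alpha_0 = \sum z_i$ and absorb the $(-a_{m-1}/a_m)$ factor appearing in each $\alpha_l$ with $l \geq 1$. For $s \geq 2$ the sum over the relevant range becomes an alternating combination of $h$'s and $e$'s in the variables $1/z_{m+1}, \ldots, 1/z_d$, which by identity \eqref{e h} of Lemma \ref{identities} telescopes to zero. This gives $f_{l,m}(\vect{\alpha}) = 0$ for each $l$.

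The main obstacle will be the careful bookkeeping of the index shifts and of the $(-1)^m$, $(-1)^l$, and $(-1)^{s-m}$ signs, together with the indicator $\mathbf{1}(i+s \geq 2)$ that forced us to split cases in Lemma \ref{PT eval}. A cleaner way to organize the computation, which I would attempt in parallel, is to encode $\sum_l \alpha_l T^l$ and $\sum_s \mathrm{PT}_m(\vect{\alpha};s) T^s$ as rational generating functions in a formal variable $T$; in that language the identity $\vect{f}_m(\vect{\alpha}) = \vect{0}$ reduces to a single generating-function equation whose verification is exactly the $e$-$h$ generating function identity used in the proof of Lemma \ref{identities}, collapsing the $m$ separate coordinate checks into one.
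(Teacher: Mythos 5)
Your approach matches the paper's proof: the paper likewise reduces the claim to $\alpha_i = f_{i,m}(\vect{\alpha})$ for each $i$, substitutes the closed forms from Lemma \ref{PT eval} (with Lemma \ref{block expr alpha} feeding into it), observes that the factors built into the definition of $\alpha_l$ make each product $\alpha_{i-j}\,(\tfrac{-a_{m-1}}{a_m})^j\,\mathrm{PT}_m(\vect{\alpha};j+1)$ collapse to $(\tfrac{-a_{m-1}}{a_m})^i(-1)^{j+1}e_j h_{i-1-j}$ in $1/z_{m+1},\ldots,1/z_d$, and then invokes identity \eqref{e h} exactly as you describe, after handling $s=0,1$ separately. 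One small slip worth flagging: you conclude ``$f_{l,m}(\vect{\alpha})=0$ for each $l$,'' but what is actually shown is $f_{l,m}(\vect{\alpha})=\alpha_l$, i.e.\ the $l$-th component of the residual system $\vect{f}_m$ vanishes; this is what the theorem's $\vect{f}_m(\vect{\alpha})=(0,\ldots,0)$ means.
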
 
\begin{proof}
We prove for $0 \leq i \leq m-1$
\[
\alpha_i = f_{i,m}(\alpha).
\]
Using Lemma \ref{PT eval} and the definition of $f_{i,m}(x)$ we check 
\[
\alpha_0 = -\frac{a_{m-1}}{a_m} +\mathrm{PT}_m(\vect{\alpha};0)  
\]
and 
\[
\alpha_1 =  \frac{-a_{m-1}}{a_m}+\alpha_1\mathrm{PT}_m(\vect{\alpha};1).  
\]
For $i \geq 2$ we must prove 
\[
\alpha_i = \sum_{j=0}^{i-1} \alpha_{i-j} (\frac{-a_{m-1}}{a_m})^j \mathrm{PT}_m(\vect{\alpha};j+1).
\]
 Using Lemma \ref{PT eval} on the right side gives 
 \begin{align*}
&(-1)^m \frac{a_m}{a_0}(\frac{-a_{m-1}}{a_m})^i h_{i-1}(\frac{1}{z_{m+1}}, ..., \frac{1}{z_d})(\prod_{i=1}^m z_i)(1-\frac{a_0}{(-1)^m a_m \prod_{i=1}^m z_i }) \\
&+(\frac{-a_{m-1}}{a_m})^i
 \sum_{j=1}^{i-1}  (-1)^{j+1}e_{j}(\frac{1}{z_{m+1}}, ..., \frac{1}{z_d}) h_{i-1-j}(\frac{1}{z_{m+1}}, ..., \frac{1}{z_d}))  \\ 
 &= \alpha_i + (\frac{-a_{m-1}}{a_m})^i
 \sum_{j=0}^{i-1}  (-1)^{j+1}e_{j}(\frac{1}{z_{m+1}}, ..., \frac{1}{z_d}) h_{i-1-j}(\frac{1}{z_{m+1}}, ..., \frac{1}{z_d}))\\ 
 & = \alpha_i. 
 \end{align*}
 This completes the proof.
\end{proof}

\section{Formula for $\det(J_{{\harpoon f}_m}(\harpoon \alpha))$  
} \label{Jacobian value}

\begin{lemma}For integer $1\leq i \leq m-1$,
\begin{align*}
&\frac{\partial \mathrm{PT}(x,s)}{\partial x_i} = \sum_{j=0}^{d-m-s+1} \frac{-a_{m-1+j+s}}{a_m}\mathbf{1}(j+s \geq 2) \\ 
&\times \sum_{k=2}^{m-i+1} (\frac{-a_{m-1}}{a_m})^{-i} (\frac{-a_{m-1-i-(k-2)}}{a_m}) \sum_{l=0}^{j-k} \left(\sum_{c \in C(l)} \prod_{n=1}^{|c|} \mathrm{expr}_m(x; c(n)) \right) \left(\sum_{c \in C(j-k-l)} \prod_{n=1}^{|c|} \mathrm{expr}_m(x; c(n)) \right) \\ 
\end{align*}
and
\begin{align*}
&\frac{\partial \mathrm{PT}(x,s)}{\partial x_0} = \sum_{j=0}^{d-m-s+1} \frac{-a_{m-1+j+s}}{a_m}\mathbf{1}(j+s \geq 2) \\ 
&\times \sum_{l=0}^{j-1} \left(\sum_{c \in C(l)} \prod_{n=1}^{|c|} \mathrm{expr}_m(x; c(n)) \right) \left(\sum_{c \in C(j-1-l)} \prod_{n=1}^{|c|} \mathrm{expr}_m(x; c(n)) \right).
\end{align*} 
\end{lemma}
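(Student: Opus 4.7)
The plan is to treat $\mathrm{PT}_m(x,s)$ as an explicit polynomial in $x_0,\ldots,x_{m-1}$ and differentiate term by term. The essential input is a closed form for $\mathrm{expr}_m(x;k)$ as an affine function of the $x_l$; once that is in hand, the rest is just the product rule together with the standard bijection between marked compositions and triples of compositions.

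First, I would extract from \cite{DeFranco 1} that $\mathrm{expr}_m(x;k)$ is linear in $x_0,\ldots,x_{m-1}$. Concretely $\mathrm{expr}_m(x;1)=x_0$, and for $k\geq 2$
\[
\mathrm{expr}_m(x;k) = \sum_{h=k-1}^{m-1}\mathrm{expr}_m(x;k,h), \qquad \mathrm{expr}_m(x;k,h) = \frac{-a_{m-1-h}}{a_m}(\frac{-a_{m-1}}{a_m})^{-(h-k+2)} x_{h-k+2}.
\]
Consistency with the block expression appearing inside the proof of Lemma \ref{block expr alpha} is automatic: evaluating at $\alpha$ turns $x_{h-k+2}$ into $\alpha_{h-k+2}$, and by the definition of $\alpha_l$ the remaining scalars collapse to exactly $h_{h-(k-2)-1}(\frac{1}{z_{m+1}},\ldots,\frac{1}{z_d})(-1)^{m+1}\frac{a_{m-1-h}}{a_0}\prod_{i=1}^m z_i$.

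Because each $\mathrm{expr}_m(x;k)$ is affine, reading off the coefficient of $x_i$ yields
\[
\frac{\partial\, \mathrm{expr}_m(x;k)}{\partial x_i} = (\frac{-a_{m-1}}{a_m})^{-i}(\frac{-a_{m-1-i-(k-2)}}{a_m})
\]
for $i\geq 1$ and $2\leq k\leq m-i+1$, and $0$ outside this range; for $x_0$ only $k=1$ contributes, with derivative $1$. Applying the product rule to
\[
\mathrm{PT}_m(x,s) = \sum_{j=0}^{d-m-s+1} \frac{-a_{m-1+j+s}}{a_m}\mathbf{1}(j+s\geq 2)\sum_{c\in C(j)}\prod_{n=1}^{|c|}\mathrm{expr}_m(x;c(n))
\]
and reorganizing by the pair (composition $c\in C(j)$, marked position $p$ with $c(p)=k$) then gives $\partial\mathrm{PT}_m/\partial x_i$ in the desired form, via the elementary bijection between such marked compositions and triples $(c_1,k,c_2)$ with $c_1\in C(l)$, $c_2\in C(j-k-l)$, $0\leq l\leq j-k$:
\[
\sum_{c\in C(j)}\sum_{p:c(p)=k}\prod_{n\neq p}\mathrm{expr}_m(x;c(n)) = \sum_{l=0}^{j-k}\Big(\sum_{c_1\in C(l)}\prod_n\mathrm{expr}_m(x;c_1(n))\Big)\Big(\sum_{c_2\in C(j-k-l)}\prod_n\mathrm{expr}_m(x;c_2(n))\Big).
\]
Substituting the derivative formula and restricting the range of $k$ to $2\leq k\leq m-i+1$ (respectively $k=1$ for $x_0$) yields the two displayed identities.

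The only real obstacle is indexing: verifying the shift $l=h-k+2$, the resulting identity $m-1-h = m-1-i-(k-2)$, and the range $2\leq k\leq m-i+1$ over which $x_i$ actually appears in $\mathrm{expr}_m(x;k)$. Beyond that, no new identities are needed; both formulas reduce to the product rule composed with the composition decomposition above.
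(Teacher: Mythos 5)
Your proof is correct and follows essentially the same route as the paper: apply the product rule to the composition sum defining $\mathrm{PT}_m(x,s)$, identify the range $2 \leq k \leq m-i+1$ in which $x_i$ appears in $\mathrm{expr}_m(x;k)$ (respectively $k=1$ for $x_0$), read off the derivative coefficient $(\frac{-a_{m-1}}{a_m})^{-i}(\frac{-a_{m-1-i-(k-2)}}{a_m})$, and reorganize via the marked-composition bijection $(c,p) \leftrightarrow (c_1,k,c_2)$. The only difference is that you make explicit the affine form $\mathrm{expr}_m(x;k,h)$ and the index shift $i=h-k+2$, which the paper leaves implicit by citing its earlier definition.
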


\begin{proof}
From the definition of $\mathrm{PT}(x,s)$ we differentiate using the product rule
\[
\frac{\partial }{\partial x_i} \sum_{c \in C(j)} \prod_{n=1}^{|c|} \mathrm{expr}_m(x; c(n)).
\]
Now $x_i$ for $i \geq 1$ appears in $\mathrm{expr}_m(x; k)$ if $2 \leq k \leq m-i+1$. For such a $k $ suppose there is a $c \in C(j)$ with $c(r)=k$ for some $r$. Then let $l$ be the integer such that 
\[
l = \sum_{n=1}^{r-1} c(n) \text{ and } j-k-l = \sum_{n=r+1}^{|c|} c(r).
\] 
Summing over all these possibilities gives 
\ppp
&\frac{\partial }{\partial x_i} \sum_{c \in C(j)} \prod_{n=1}^{|c|} \mathrm{expr}_m(x; c(n)) \\ 
&=\sum_{k=2}^{m-i+1} \frac{\partial}{\partial x_i}  \mathrm{expr}_m(x; k)\sum_{l=0}^{j-k} \left(\sum_{c \in C(l)} \prod_{n=1}^{|c|} \mathrm{expr}_m(x; c(n)) \right) \left(\sum_{c \in C(i-k-l)} \prod_{n=1}^{|c|} \mathrm{expr}_m(x; c(n)) \right). 
\ooo
Therefore
\begin{align*}
\frac{\partial \mathrm{PT}(x,s)}{\partial x_i} &= \sum_{j=0}^{d-m-s+1} \frac{-a_{m-1+j+s}}{a_m}\mathbf{1}(j+s \geq 2) \\ 
&\times \sum_{k=2}^{m-i+1} \frac{\partial}{\partial x_i}  \mathrm{expr}_m(x; k)\sum_{l=0}^{j-k} \left(\sum_{c \in C(l)} \prod_{n=1}^{|c|} \mathrm{expr}_m(x; c(n)) \right) \left(\sum_{c \in C(j-k-l)} \prod_{n=1}^{|c|} \mathrm{expr}_m(x; c(n)) \right). 
\end{align*}
and from the definition we have for $i \geq 1$
\[
\frac{\partial}{\partial x_i}  \mathrm{expr}_m(x; k) = (\frac{-a_{m-1}}{a_m})^{-i} (\frac{-a_{m-1-i-(k-2)}}{a_m}).
\]
This completes the proof for $i \geq 1$. 

For $i=0$, $x_0$ only appears in $\mathrm{expr}_m(x; 1)$. By the same reasoning we have 
\begin{align*}
\frac{\partial \mathrm{PT}(x,s)}{\partial x_0} &= \sum_{j=0}^{d-m-s+1} \frac{-a_{m-1+j+s}}{a_m}\mathbf{1}(j+s \geq 2) \\ 
&\times \sum_{l=0}^{j-1} \left(\sum_{c \in C(l)} \prod_{n=1}^{|c|} \mathrm{expr}_m(x; c(n)) \right) \left(\sum_{c \in C(j-1-l)} \prod_{n=1}^{|c|} \mathrm{expr}_m(x; c(n)) \right). 
\end{align*}
This completes the proof.
\end{proof}
\begin{corollary}
For $1 \leq i \leq m-1$
\ppp
&\frac{\partial \mathrm{PT}(\vect{x},s)}{\partial x_i} |_{\vect{x} = \vect{\alpha}}= \sum_{j=0}^{d-m-s+1} \frac{-a_{m-1+j+s}}{a_m}\mathbf{1}(j+s \geq 2) \\ 
&\times \sum_{k=2}^{m-i+1} (\frac{-a_{m-1}}{a_m})^{-i} (\frac{-a_{m-1-i-(k-2)}}{a_m}) \sum_{l=0}^{j-k}  h_l(z_1, ..., z_m) h_{j-k-l}(z_1, ..., z_m)\\ 
\ooo
and 
\ppp
&\frac{\partial \mathrm{PT}(\vect{x},s)}{\partial x_0} |_{\vect{x} = \vect{\alpha}} = \sum_{j=0}^{d-m-s+1} \frac{-a_{m-1+j+s}}{a_m}\mathbf{1}(j+s \geq 2) \\ 
&\times \sum_{l=0}^{j-1}  h_l(z_1, ..., z_m)  h_{j-1-l}(z_1, ..., z_m) .
\ooo
\end{corollary}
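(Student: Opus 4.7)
The plan is to apply the preceding Lemma directly and simplify each of the inner composition sums using Lemma~\ref{block expr alpha} together with identity~\eqref{ec}. Specializing $\vec{x} = \vec{\alpha}$ in the Lemma's two formulas leaves only the factors of the form
\[
S(l) := \sum_{c \in C(l)} \prod_{n=1}^{|c|} \mathrm{expr}_m(\vect{\alpha}; c(n))
\]
to evaluate, since the coefficients $-a_{m-1+j+s}/a_m$, the indicator $\mathbf{1}(j+s\geq 2)$, and the derivative factor $(-a_{m-1}/a_m)^{-i}(-a_{m-1-i-(k-2)}/a_m)$ all carry through unchanged.

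To evaluate $S(l)$, I would first use Lemma~\ref{block expr alpha} to substitute $\mathrm{expr}_m(\vect{\alpha}; c(n)) = (-1)^{c(n)-1} e_{c(n)}(z_1, \ldots, z_m)$. Because the parts of any $c \in C(l)$ sum to $l$, the signs telescope:
\[
\prod_{n=1}^{|c|} (-1)^{c(n)-1} = (-1)^{l-|c|}.
\]
Thus
\[
S(l) = (-1)^l \sum_{c \in C(l)} (-1)^{|c|} e_{c}(z_1, \ldots, z_m) = h_l(z_1, \ldots, z_m),
\]
where the last equality is identity~\eqref{ec} of Lemma~\ref{identities}. The edge case $l=0$ is consistent, since $C(0)$ consists of the empty composition and $S(0) = 1 = h_0$.

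Substituting $S(l) = h_l(z_1,\ldots,z_m)$ for the $C(l)$ factor and $S(j-k-l) = h_{j-k-l}(z_1,\ldots,z_m)$ for the $C(j-k-l)$ factor in the formula for $\partial \mathrm{PT}/\partial x_i$, and analogously $S(l) = h_l$ together with $S(j-1-l) = h_{j-1-l}$ in the formula for $\partial \mathrm{PT}/\partial x_0$, produces exactly the two displayed equations of the corollary. No step here presents a serious obstacle: the proof reduces to a substitution, a one-line sign count, and two applications of identity~\eqref{ec}, so the main thing to watch is that the telescoping of signs is carried out consistently for both of the inner composition sums.
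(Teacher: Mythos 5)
Your argument is correct and matches the paper's (very terse) proof, which simply cites Lemma~\ref{block expr alpha} and identity~\eqref{ec}: the key step in both is substituting $\mathrm{expr}_m(\vect{\alpha};k)=(-1)^{k-1}e_k(z_1,\ldots,z_m)$, telescoping the signs to $(-1)^{l-|c|}$, and recognizing the resulting composition sum as $h_l(z_1,\ldots,z_m)$ via identity~\eqref{ec}. Your sign bookkeeping and the $l=0$ edge case are handled correctly, so nothing is missing.
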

\begin{proof}
This follows from Lemma  \ref{block expr alpha} and identities \eqref{ec} and \eqref{hc}. 
\end{proof}

\begin{lemma} \label{hh to he}
For a non-negative integer $j \leq d$, 
\[
\sum_{i=0}^{d-j} a_{j+i} \sum_{k=0}^i h_{k}(z_1, ..., z_m)h_{i-k}(z_1, ..., z_m)  = \frac{(-1)^d}{\prod_{i=1}^d z_i} \sum_{k=0}^{d-j}(-1)^kh_{d-j-k}(z_1,...,z_m)e_k(z_{m+1}, ...,z_d ).
\]
\end{lemma}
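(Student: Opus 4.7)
The plan is to prove both sides equal a common coefficient of the rational function $B(t)/A(t)$, where
\[
A(t)=\prod_{i=1}^m(1-tz_i),\qquad B(t)=\prod_{i=m+1}^d(1-tz_i),\qquad A(t)B(t)=\prod_{i=1}^d(1-tz_i).
\]
All the ingredients have clean generating-function interpretations from standard symmetric-function identities:
\[
\frac{1}{A(t)}=\sum_{l\geq 0}h_l(z_1,\ldots,z_m)\,t^l,\qquad B(t)=\sum_{k\geq 0}(-1)^k e_k(z_{m+1},\ldots,z_d)\,t^k,
\]
and $A(t)B(t)=\sum_{k\geq 0}(-1)^k e_k(z_1,\ldots,z_d)\,t^k$.

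For the right-hand side, I would form the Cauchy product of $B(t)$ and $1/A(t)$ to obtain
\[
[t^{d-j}]\frac{B(t)}{A(t)}=\sum_{k=0}^{d-j}(-1)^k h_{d-j-k}(z_1,\ldots,z_m)\,e_k(z_{m+1},\ldots,z_d),
\]
which immediately identifies the right-hand side with $\tfrac{(-1)^d}{\prod_{i=1}^d z_i}\,[t^{d-j}]\,B(t)/A(t)$.

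For the left-hand side, the inner sum is $\sum_{k=0}^i h_k(z_1,\ldots,z_m)\,h_{i-k}(z_1,\ldots,z_m)=[s^i]\,1/A(s)^2$. To convert $a_{j+i}$ into a coefficient of the same generating object, I would use the factorisation $f(z)=a_d\prod_{i=1}^d(z-z_i)=a_d\,z^d\,A(1/z)\,B(1/z)$, which is equivalent to $a_k=a_d(-1)^{d-k}\,e_{d-k}(z_1,\ldots,z_d)$ and yields $a_{j+i}=a_d\,[w^{d-j-i}]\,A(w)B(w)$. Substituting and recognising the resulting expression as a Cauchy product gives
\[
\text{LHS}=a_d\sum_{i=0}^{d-j}\bigl([w^{d-j-i}]A(w)B(w)\bigr)\bigl([s^i]\,1/A(s)^2\bigr)=a_d\,[t^{d-j}]\,\frac{A(t)B(t)}{A(t)^2}=a_d\,[t^{d-j}]\,\frac{B(t)}{A(t)}.
\]
The truncation $0\leq i\leq d-j$ is the full convolution because $A(w)B(w)$ has degree $d$ while $1/A(s)^2$ has only non-negative powers of $s$, so no boundary terms are lost. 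Finally, the identity $a_d/a_0=(-1)^d/\prod_{i=1}^d z_i$, read off from the leading term of $f$, matches the two prefactors and closes the argument.

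The main obstacle is sign and constant bookkeeping: first pinning down the exact conversion $a_k=a_d(-1)^{d-k}e_{d-k}(z_1,\ldots,z_d)$ from the factorisation of $f$, and second verifying that the finite range of summation in the LHS really captures the entire Cauchy product once the supports of $A(t)B(t)$ (polynomial of degree $d$) and $1/A(t)^2$ (power series with only non-negative powers) are accounted for.
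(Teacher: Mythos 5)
Your generating-function argument is valid and takes a genuinely different route from the paper's. The paper clears $\prod_{i=1}^d z_i$ from both sides, rewrites $a_{j+i}\prod_{i=1}^d z_i$ as a sum of products $e_{d-j-i-g}(z_1,\ldots,z_m)\,e_g(z_{m+1},\ldots,z_d)$, and then runs an explicit term-grouping cancellation: it indexes the expanded left side by triples $(i,g,k)$, pairs each with $(i+n,g,k+n)$, and invokes $\sum_{i}(-1)^ih_ie_{n-i}=\delta_{n,0}$ to annihilate every group except the $k=0$, $l-i-g=0$ survivors. You package that same identity as $A(t)\cdot 1/A(t)=1$ and let Cauchy-product algebra do the cancellation, which is tidier and makes transparent that both sides are really $[t^{d-j}]\,B(t)/A(t)$ up to a constant. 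One shared piece of bookkeeping to watch: the constant $(-1)^d/\prod_{i=1}^d z_i$ on the right of the lemma equals $a_d$ only when $a_0=1$, since in general $a_d = a_0(-1)^d/\prod_{i=1}^d z_i$; the paper's own intermediate equality $a_{j+i}\prod_{i=1}^d z_i=(-1)^{j+i}\sum_g e_{d-j-i-g}e_g$ silently drops an $a_0$ for the same reason. So your ``matches the two prefactors'' step inherits the paper's implicit normalisation rather than introducing a fresh gap, but it would be cleaner to state $a_0=1$ (or carry the factor of $a_0$ through) explicitly.
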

\begin{proof}
We multiply through by 
\[
(-1)^d \prod_{i=1}^d z_i
\]
and use 
\[
a_{j+i}\prod_{i=1}^d z_i = (-1)^{j+i}\sum_{g=0}^{d-j-i}e_{d-j-i-g}(z_1, ..., z_m)e_g(z_{m+1}, ..., z_d).
\]
We now set $l=d-j$ and see that the lemma statement is equivalent to 
\begin{align*}
&\sum_{i=0}^{l} (-1)^{l-i} (\sum_{g=0}^{l-i} e_{l-i-g}(z_1, ..., z_m) e_{g}(z_{m+1}, ..., z_d) ) (\sum_{k=0}^i h_{k}(z_1, ..., z_m)h_{i-k}(z_1, ..., z_m) ) \\
&= \sum_{g=0}^{l}(-1)^g h_{l-g}(z_1,...,z_m)e_g(z_{m+1}, ...,z_d )
\end{align*}
We expand the left side and take a term indexed by the 3-tuple $(i,g,k)$  
\[
(-1)^{l-i}e_{l-i-g}(z_1, ..., z_m) e_{g}(z_{m+1}, ..., z_d) h_{k}(z_1, ..., z_m)h_{i-k}(z_1, ..., z_m). 
\]
For fixed $(i,g,k)$ we group together all the terms of the form 
\[
(i+n, g, k+n)
\]
for integer $n$. The sum of these terms is equal to 0 by identity \ref{e h} unless $k=0$ and $l-i-g=0$, in which case we only have one term in the group which is 
\[
(-1)^g h_{l-g}(z_1, ..., z_m)e_{g}(z_{m+1}, ..., z_d).
\]
The index $g$ ranges from 0 to $l$. This completes the proof. 
\end{proof}

\begin{lemma}For $ 1 \leq i\leq m-1$
 \ppp
&\frac{\partial \mathrm{PT}(\vect{x},s)}{\partial x_i} |_{\vect{x}=\vect{\alpha}} = (\frac{-a_{m-1}}{a_m})^{-i}  \frac{(-1)^d}{-a_m\prod_{n=1}^d z_n}\\ 
&\times \sum_{k=2}^{m-i+1} (\frac{-a_{m-1-i-(k-2)}}{a_m})\sum_{l=0}^{d-(m-1+k+s)} (-1)^l h_{d - (m-1+k+s)-l}(z_1, \ldots, z_m) e_l(z_{m+1}, \ldots, z_d)
\ooo

 \ppp
&\frac{\partial \mathrm{PT}(\vect{x},s)}{\partial x_0} |_{\vect{x} = \vect{\alpha}} = \frac{(-1)^d}{-a_m\prod_{n=1}^d z_n} \sum_{l=0}^{d-(m+s)} (-1)^l h_{d - (m+s)-l}(z_1, \ldots, z_m) e_l(z_{m+1}, \ldots, z_d)+\mathbf{1}(s=0)
\ooo
\end{lemma}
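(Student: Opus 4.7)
The plan is to derive both formulas as direct consequences of Lemma \ref{hh to he} applied to the evaluations given in the preceding corollary. That corollary has already isolated every dependence on the coordinates $x_i$, leaving sums of the form $\sum_j \frac{-a_{m-1+j+s}}{a_m} \mathbf{1}(j+s \geq 2) \sum_l h_l(z_1,\ldots,z_m) h_{\bullet - l}(z_1,\ldots,z_m)$, and Lemma \ref{hh to he} is precisely the identity that collapses such a convolution against the coefficients $a_\bullet$ into a sum of $h \cdot e$ products, producing the factor $\frac{(-1)^d}{\prod_{n=1}^d z_n}$ that appears in the claim.

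For $i \geq 1$, I would first pull the factors $(\frac{-a_{m-1}}{a_m})^{-i}$ and $(\frac{-a_{m-1-i-(k-2)}}{a_m})$, which do not depend on $j$, out of the $j$-sum and interchange the $j$- and $k$-summations. For each fixed $k \geq 2$ the inner convolution $\sum_{l=0}^{j-k} h_l h_{j-k-l}$ is empty unless $j \geq k \geq 2$, so on its support the indicator $\mathbf{1}(j+s \geq 2)$ is automatically $1$ and can be dropped. The substitution $j' = j-k$ then rewrites the inner sum as $\frac{-1}{a_m} \sum_{j'=0}^{d-(m-1+k+s)} a_{(m-1+k+s)+j'} \sum_{l=0}^{j'} h_l h_{j'-l}$, which is exactly the left-hand side of Lemma \ref{hh to he} with its parameter $j$ replaced by $m-1+k+s$. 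Applying the lemma and reinserting the prefactors yields the stated formula.

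For $i=0$ the argument is the same with $k$ specialized to $1$: after the shift $j' = j - 1$ the inner sum again reads $\sum_{l=0}^{j'} h_l h_{j'-l}$, and Lemma \ref{hh to he} with parameter $j = m+s$ delivers the main term. The only wrinkle is the indicator $\mathbf{1}(j+s \geq 2)$ at the boundary $j = 1$, equivalently $j' = 0$. When $s \geq 1$ the indicator is automatic and the lemma gives the answer immediately. When $s = 0$ the indicator excludes the $j'=0$ term, which would otherwise contribute $\frac{-a_m}{a_m} \cdot h_0 \cdot h_0 = -1$ to the unrestricted sum; subtracting this $-1$ from the lemma's output adds $+1$ to the result, accounting for the $+\mathbf{1}(s=0)$ in the claim.

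The main obstacle is the careful bookkeeping around the $j=1$ boundary when $i=0$ and $s=0$, together with verifying that the outer summation range $0 \leq j \leq d-m-s+1$ becomes $0 \leq j' \leq d-(m-1+k+s)$ under the shift $j' = j - k$ so that it matches the hypothesis of Lemma \ref{hh to he}. Beyond these arithmetic adjustments, no further identity is required; the content of the lemma is essentially a repackaging of Lemma \ref{hh to he} in the two shift-regimes dictated by the corollary.
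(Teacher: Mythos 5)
Your proof is correct and follows essentially the same route as the paper's: rearrange the Corollary's double sum to bring the $j$-sum innermost, shift $j' = j - k$ (resp.\ $j' = j-1$), observe that the indicator $\mathbf{1}(j+s\geq 2)$ is automatic once $k\geq 2$, apply Lemma~\ref{hh to he} with parameter $m-1+k+s$ (resp.\ $m+s$), and handle the boundary term $j'=0$ when $i=0$, $s=0$ by adding and subtracting $-1$ to produce $\mathbf{1}(s=0)$. Both the re-indexing bookkeeping and the boundary analysis match the paper's argument.
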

\begin{proof}
From Corollary 1 \eqref{} we re-arrange to obtain 
\ppp
&\frac{\partial \mathrm{PT}(\vect{x},s)}{\partial x_i} |_{\vect{x} = \vect{\alpha}}\\ 
& =\sum_{k=2}^{m-i+1} (\frac{-a_{m-1-i-(k-2)}}{a_m})\sum_{j=k}^{d-m-s+1}\frac{-a_{m-1+j+s}}{a_m} \mathbf{1}(j+s \geq 2) \sum_{l=0}^{j-k} h_{l}(z_1, \dots, z_m) h_{j-k-l}(z_{m+1}, \dots, z_{d}) 
\ooo
Now we re-index $j = k+l$ for $l \geq0$. Note that since the index $k$ starts at $2$, the condition 
\begin{equation} \label{s+j} 
s+j \geq 2
\end{equation}
is always satisfied. Applying Lemma \ref{hh to he} completes the proof for $i \geq1$. 

For $i=0$, we use the same reasoning, but the index $j$ now starts at $1$. At $j=1$ the condition \eqref{s+j} is not satisfied if $s=0$; the values $j=1$ and $s=0$ give exactly one term in the double sum that evaluates to $-1$, so we add and subtract $-1$ to obtain the term 
\[
\mathbf{1}(s=0).
\] 
This completes the proof. 
\end{proof}

\begin{lemma} \label{powers of a}
The determinant 
 \[
 \det(J_{\vect{f}_m})(\vect{\alpha})
 \]
 has factors of 
 \[
 a_m^{-m}, \quad \text{ and } \quad a_{m-1}^0.
 \] 
\end{lemma}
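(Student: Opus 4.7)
The plan is to analyze the $a_m$ and $a_{m-1}$ power structure of each Jacobian entry $J_{ik} := \partial f_{i,m}/\partial x_k \big|_{\vect{x} = \vect{\alpha}}$ by combining the partial-derivative formulas from the preceding lemma with the evaluations from Lemma \ref{PT eval}, and then read off both factorizations by summing exponents in the permutation expansion
\[
\det J_{\vect{f}_m}(\vect{\alpha}) = \sum_{\sigma \in S_m} \mathrm{sgn}(\sigma) \prod_{i=0}^{m-1} J_{i, \sigma(i)}.
\]

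For the factor $a_{m-1}^0$, the key observation is that $a_{m-1}$ enters the iteration only through the ratio $A := -a_{m-1}/a_m$: as $A^l$ in $\alpha_l$, as $A^j$ inside the sum defining $f_{i,m}$, and as $A^{-i}$ in $\partial \mathrm{expr}_m(x;k)/\partial x_i$. Tracking these factors through both the direct and the indirect contributions to $J_{ik}$ shows $J_{ik} = a_{m-1}^{e(i,k)} Q_{ik}$ with $Q_{ik}$ free of $a_{m-1}$, where $e(i,k) = i - k$ for $k \geq 1$ and $e(i,0) = i$. Letting $i^* = \sigma^{-1}(0)$, the total $a_{m-1}$-exponent of $\prod_i J_{i, \sigma(i)}$ is
\[
i^* + \sum_{i \neq i^*} (i - \sigma(i)) = \sum_{i=0}^{m-1} i - \sum_{k=1}^{m-1} k = 0.
\]
Since this holds for every $\sigma$, the determinant is independent of $a_{m-1}$.

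For the factor $a_m^{-m}$ I would perform analogous bookkeeping for the minimum $a_m$-exponent $p(i,k)$ of each entry, using $\mathrm{PT}(\vect{\alpha};s) \sim a_m^{-1}$ (with an extra $a_m^0$ piece when $s=1$), $\alpha_l \sim a_m^{1-l}$ from the $A^l(a_m/a_0)$ factor, and $\partial \mathrm{PT}(\vect{x};s)/\partial x_k|_{\vect{\alpha}} \sim a_m^{k-2}$ for $k \geq 1$ (or $a_m^{-1}$ for $k=0$, $s \geq 1$, together with the $\mathbf{1}(s=0)$ constant when $s=0$). A short casework on whether $i^* = 0$, $i^* = 1$, or $i^* \geq 2$ then verifies $\sum_i p(i,\sigma(i)) = -m$ uniformly in $\sigma$, so $\det J$ has minimum $a_m$-exponent at least $-m$, meaning $a_m^{-m}$ is a factor.

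The main obstacle is the bookkeeping for the ``exceptional'' rows $i \in \{0,1\}$ and column $k = 0$, whose power structure differs from the generic $i \geq 2$, $k \geq 1$ case: the direct terms of $f_{0,m}$ and $f_{1,m}$ have special forms, and the constant $1$'s in $\mathrm{PT}(\vect{\alpha};1)$ and $\mathbf{1}(s=0)$ create $a_m^0$ contributions at entries $(0,0)$, $(1,1)$, and $(i,i)$. The uniform identities $\sum_i e(i,\sigma(i)) = 0$ and $\sum_i p(i,\sigma(i)) = -m$ hold precisely because these exceptions conspire across rows and columns to match the generic exponent sums for every permutation $\sigma$.
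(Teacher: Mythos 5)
Your proposal follows essentially the same approach as the paper: read off the $a_{m-1}$- and $a_m$-exponents of each Jacobian entry from the definitions of $f_{i,m}$, $\alpha_l$, and $\partial\mathrm{PT}/\partial x_k$, then verify that every permutation term in the Laplace expansion of $\det J_{\vect{f}_m}(\vect{\alpha})$ has constant total exponent ($0$ for $a_{m-1}$, $-m$ for $a_m$). The paper records the same per-entry exponent table (in $1$-indexed form, with the same exceptional cases for row $1$, column $1$) and asserts the constant sum ``by induction'' where you do the telescoping directly; the substance is identical.
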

\begin{proof}
From the definitions we check that 
\begin{equation} \label{entry}
J_{\vect{f}_m}(\vect{\alpha})_{i,j}
\end{equation}
has a factor of 
\[
a_{m-1}^{i-j}.
\]
Using the definition of determinant has a sum over the symmetric group $S_m$, we see by induction that each of the $m!$ terms in this sum has a factor of $a_{m-1}^0$.  

Similarly, we see that $\eqref{entry}$ has a factor of 
\[
\begin{cases}
a_m^{j-i-1} \text{ if } i  \text{ and } j \geq 2 \\ 
a_m^{1-i} \text{ if } i\geq 2, j=1 \\  
a_m^{j-3} \text{ if } i=1, j\geq 2 \\  
a_m^{-1} \text{ if } i=j=1 \\ 
\end{cases}.
\]
We see by induction that each of the $m!$ terms in the sum for the determinant has a factor of $a_m^{-m}$. This completes the proof. 
\end{proof}

\begin{definition}For integers $n,n_1,n_2 \geq 0$ and indeterminates $x_i,y_i$, define
\[
\mu_n(x_1, \ldots, x_{n_1}; y_1, \ldots, y_{n_2}) = \sum_{l=0}^n (-1)^l h_l(x_1, \ldots, x_{n_1}) e_{n-l}(y_1, \ldots, y_{n_2})
\] 
\end{definition}

Lemma \ref{powers of a} allows us to factor out powers of $a_{m-1}$ and $a_{m}$ from the determinant, yielding 
\[
\det(J_{\vect{f}_m})(\vect{\alpha})  a_m^m  (\prod_{n=1}^d z_n)^m= \det (M)
\]
where $M$ is the following matrix. 

\begin{definition}
For $1 \leq j \leq m$ and $2 \leq i \leq m$, 
\ppp
M_{1,j} &= \sum_{k=0}^{\max(m-j-1,0)} a_k \mu_{d-(2m-j)-k}(z_1, \ldots, z_m; z_{m+1}, \dots z_d)\\
M_{i,j} &= (\prod_{n=1}^m z_n)\sum_{c=1}^{i-1} h_{c-1}(z_{m+1}, \ldots, z_d) \sum_{k=0}^{\max(m-j-1,0)} a_k \mu_{d-(2m-j)-k+c}(z_1, \ldots, z_m; z_{m+1}, \dots z_d)\\ 
&- \mathbf{1}(j>i) e_{i-j-1}(\frac{1}{z_{m+1}}, \ldots, \frac{1}{z_d}) \prod_{n=m+1}^d z_n
\ooo 
\end{definition}

Next we apply row and column operations to $M$ to express $\det(M)$ as $\det(U+V)$ for the following matrices $U$ and $V$. 

\begin{definition}
\[
U_{i,j} = 
\begin{cases} 
\mu_{d-2m+j-i+1}(z_1, \ldots, z_m; z_{m+1}, \dots z_d)  &\text{ if } i=1 \\  
\mu_{d-2m+j-i+1}(z_1, \ldots, z_m; z_{m+1}, \dots z_d)(\prod_{n=1}^m z_n) &\text{ if } i >1\\ 
\end{cases}
\]
 
\[
V_{i,j}=
\begin{cases} 
- \mu_{i-j-1}(\frac{1}{z_1}, \ldots, \frac{1}{z_m}; \frac{1}{z_{m+1}}, \ldots, \frac{1}{z_d} ) \prod_{n=m+1}^d z_n &\text{ if } j < i \\  
0 & \text{ if } j \geq i
\end{cases}
\]

\end{definition}

\begin{lemma} \label{det M}
 
\[
\det(M) = \det(U+V)
\] 
\end{lemma}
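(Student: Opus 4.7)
The plan is to prove $\det(M)=\det(U+V)$ by transforming $M$ into $U+V$ through a sequence of elementary row and column operations (each adding a multiple of one row or column to another), which preserve the determinant.

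The structure of $M$ splits naturally: each entry $M_{i,j}$ has a principal piece --- a truncated convolution $\sum_k a_k \mu_{d-2m+j-k+c}(z_1,\ldots,z_m;z_{m+1},\ldots,z_d)$ summed over $c=1,\ldots,i-1$, weighted by $h_{c-1}(z_{m+1},\ldots,z_d)$ and $\prod_{n=1}^m z_n$ --- plus a strictly lower-triangular correction $-\mathbf{1}(j<i) e_{i-j-1}(1/z_{m+1},\ldots,1/z_d)\prod_{n=m+1}^d z_n$ (where I read the stated indicator $\mathbf{1}(j>i)$ as $\mathbf{1}(j<i)$, since the former would force the term to vanish by $e_k=0$ for $k<0$). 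The target $U+V$ has the analogous split into $U_{i,j}$, a single $\mu$ term, and $V_{i,j}$, the strictly lower-triangular $\mu$-based correction.

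First I would perform column operations on $M$, from right to left, so that the truncated $k$-convolution in each row collapses to its $k=0$ contribution. The weight for subtracting column $j$ from column $j-k$ would be built from $a_k$, so that the cancellations happen uniformly across all rows. Second, I would apply row operations that telescope the $c$-sum: the difference of row $i$ and row $i-1$ leaves only the single $c=i-1$ term, which can then be further manipulated using the shift behavior of $\mu$ (available through the identities of Lemma \ref{identities}) so that the principal piece of row $i$ is brought into the form of $U_{i,j}$.

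Finally, I would track the extra contributions that these column and row operations drop onto the strictly lower-triangular part of the matrix. By the definition $\mu_n(1/z_1,\ldots,1/z_m;1/z_{m+1},\ldots,1/z_d)=\sum_l (-1)^l h_l(1/z_1,\ldots,1/z_m) e_{n-l}(1/z_{m+1},\ldots,1/z_d)$, these accumulated corrections should supply precisely the $l\geq 1$ terms needed to promote the starting $e_{i-j-1}(1/z_{m+1},\ldots,1/z_d)$ to $\mu_{i-j-1}(1/z_1,\ldots,1/z_m;1/z_{m+1},\ldots,1/z_d)$, producing $V_{i,j}$.

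The principal obstacle is the bookkeeping forced by the truncation of the inner $k$-sum at $\max(m-j-1,0)$, so the column operations must be organized to neither overshoot the truncation nor under-correct the higher-$k$ contributions.  Verifying that the accumulated lower-triangular corrections close up correctly will rely on identity (\ref{e h}) and, in collapsing intermediate double sums, on its iterated corollaries (\ref{ec}) and (\ref{hc}).
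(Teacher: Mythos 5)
Your plan is essentially the paper's proof: first column operations weighted by $a_{j_2-j_1}$ (performed from higher to lower column index) to collapse the inner $k$-convolution, then row operations to strip the $c$-sum down to a single $\mu$, and finally an inductive accounting of the leftover strictly-lower-triangular pieces using identity \eqref{e h} and its corollaries \eqref{ec}, \eqref{hc} to show they assemble into $V$. The only small divergence is in the row operations: the paper subtracts $h_{i_1-i_2}(z_{m+1},\ldots,z_d)\cdot\mathrm{row}(i_2)$ from $\mathrm{row}(i_1)$ (sweeping $i_2$ from $2$ to $m-1$), which directly eliminates the higher-$c$ terms in one pass, rather than taking a plain difference of consecutive rows and then needing the extra ``shift of $\mu$'' manipulation you allude to; with that adjustment your route matches the paper's step for step, including your (correct) reading of the indicator in $M_{i,j}$ as $\mathbf{1}(j<i)$.
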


\begin{proof}
 Let $\mathrm{row}(i)$ and $\mathrm{col}(j)$ denote the $i$th row and $j$th column of a matrix, respectively. First perform the successive column operations to $M$: 
starting with $j_1 = j_2-1$ and proceeding to $j_1=1$, perform
\[
 \mathrm{col}(j_1) \mapsto  \mathrm{col}(j_1)-a_{j_2-j_1} \mathrm{col}(j_2).
\] 
 Do this for each $j_2$ starting from $j_2=m-1$ and proceeding to $j_2 = 2$. 
 
 Such an operation removes all the terms in column $j_1$ of the form 
 \[
 a_{j_2-j_1} \mu_{\cdot} (z_1, \dots, z_m; z_{m+1}, \ldots, z_d).
 \]
Next perform the successive row operations to the resulting matrix: 
starting with $i_1 = i_2+1$ and proceeding to $i_1=m$, perform
\[
\mathrm{row}(i_1) \mapsto  \mathrm{row}(i_1)- h_{i_1-i_2}(z_{m+1}, \ldots, z_d) \mathrm{row}(i_2). 
\]
Do this for each $i_2$ starting from $i_2=2$ and proceeding to $2_2 = m-1$.
Such an operation removes all the terms in row $j_1$ of the form 
 \[
h_{i_1-i_2}(z_{m+1}, \ldots, z_d)  \mu_{\cdot} (z_1, \dots, z_m; z_{m+1}, \ldots, z_d).
 \]

Call the resulting matrix $M'$. We thus have that 
\[
M' = U + V'
\]
where we obtain the following formula for $V'$ by induction:
\[
V'_{i,j} = -(\prod_{n=m+1}^d z_n)\sum_{l=0}^{i-j-1} e_{l}(\frac{1}{z_{m+1}}, \ldots, \frac{1}{z_d}) \sum_{k=0}^l \left(\sum_{c \in C(k)} (-1)^{k+|c|}h_c(\frac{1}{z_{m+1}}, \ldots, \frac{1}{z_d})\right) \left(\sum_{c \in C(l-k)}  (-1)^{l-k+|c|}a_c\right). 
\] 
By identities \eqref{ec} and \eqref{hc} this is equal to 
\[
-(\prod_{n=m+1}^d z_n)\sum_{l=0}^{i-j-1} e_{l}(\frac{1}{z_{m+1}}, \ldots, \frac{1}{z_d}) \sum_{k=0}^l (-1)^{l-k}e_k(\frac{1}{z_{m+1}}, \ldots, \frac{1}{z_d})h_{l-k}(\frac{1}{z_1}, \ldots, \frac{1}{z_d}). 
\]
Now to the above sum we apply the identity 
\[
h_{k}(\frac{1}{z_1}, \ldots, \frac{1}{z_d})  = \sum_{n=0}^k h_{n}(\frac{1}{z_1}, \ldots, \frac{1}{z_m})h_{k-n}(\frac{1}{z_{m+1}}, \ldots, \frac{1}{z_d})
\]
and re-arrange the sum to obtain 
\[
\sum_{n_2=0}^{i-j-1} (-1)^{n_2}e_{i-j-1-n_2}(\frac{1}{z_{m+1}}, \ldots, \frac{1}{z_d}) \sum_{k=0}^{n_2}h_{n_2-k}(\frac{1}{z_1}, \ldots, \frac{1}{z_m})\sum_{n_1=0}^{k}(-1)^{n_1} e_{n_1}(\frac{1}{z_{m+1}}, \ldots, \frac{1}{z_d})h_{k-n_1}(\frac{1}{z_{m+1}}, \ldots, \frac{1}{z_d}).
\]
By identity \eqref{e h} the non-zero terms occur only when $k=0$. This yields the formula for $V$ and completes the proof. 
\end{proof} 

\begin{lemma} \label{mu z1}
Suppose $x_1 = y_{1}$. Then 
\[
\mu_n(z_1, \ldots, x_{n_1}; y_1, \ldots, y_{n_2}) = \mu_n(z_2, \ldots, x_{n_1}; y_2, \ldots, y_{n_2}). 
\] 
\end{lemma}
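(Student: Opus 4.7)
The plan is to recognize the sequence $\{\mu_n\}_{n\geq 0}$ as the Taylor coefficients of an explicit rational generating function and then observe that the hypothesis $x_1 = y_1$ cancels one factor from the numerator against one from the denominator.

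First I would compute the generating function $\Phi(t) = \sum_{n \geq 0} \mu_n(x_1, \ldots, x_{n_1}; y_1, \ldots, y_{n_2}) t^n$. Writing
\[
\Phi(t) = \sum_n t^n \sum_{l=0}^n (-1)^l h_l(x_1, \ldots, x_{n_1}) e_{n-l}(y_1, \ldots, y_{n_2})
\]
and regrouping as a Cauchy product, this factors as
\[
\Phi(t) = \left(\sum_{l \geq 0} (-t)^l h_l(x_1, \ldots, x_{n_1})\right)\left(\sum_{k \geq 0} t^k e_k(y_1, \ldots, y_{n_2})\right).
\]
Using the standard generating function identities $\sum_l h_l(x) t^l = \prod_{i=1}^{n_1}(1-tx_i)^{-1}$ and $\sum_k e_k(y) t^k = \prod_{j=1}^{n_2}(1+ty_j)$ (both of which are visible already from the proof of identity \eqref{e h}), I obtain
\[
\Phi(t) = \frac{\prod_{j=1}^{n_2}(1+ty_j)}{\prod_{i=1}^{n_1}(1+tx_i)}.
\]

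Next, if $x_1 = y_1$, the factor $(1+tx_1)$ in the denominator is identically equal to the factor $(1+ty_1)$ in the numerator, so they cancel and
\[
\Phi(t) = \frac{\prod_{j=2}^{n_2}(1+ty_j)}{\prod_{i=2}^{n_1}(1+tx_i)},
\]
which by the same computation run in reverse is the generating function for $\mu_n(x_2, \ldots, x_{n_1}; y_2, \ldots, y_{n_2})$. Comparing coefficients of $t^n$ yields the claimed equality.

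There is no real obstacle: once the Cauchy product is identified with a ratio of the two standard generating functions, the lemma is a one-line cancellation. The only mild bookkeeping point is the sign convention, namely evaluating $\sum_l h_l(x)(-t)^l$ as $\prod_i (1+tx_i)^{-1}$ (rather than $(1-tx_i)^{-1}$), so that the numerator and denominator use the same linear factors and cancellation is literally term-by-term.
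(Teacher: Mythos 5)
Your proof is correct. The paper itself states this lemma without a proof (evidently regarding it as routine), so there is no author's argument to compare against; your generating-function argument is a clean and complete way to supply the missing verification. The Cauchy-product identification
\[
\sum_{n\ge 0}\mu_n(x_1,\ldots,x_{n_1};y_1,\ldots,y_{n_2})\,t^n \;=\; \frac{\prod_{j=1}^{n_2}(1+t y_j)}{\prod_{i=1}^{n_1}(1+t x_i)}
\]
is the right one, and the sign convention you flag (using $(-t)$ in the $h$-series so both numerator and denominator carry the linear factors $1+t\,\cdot$) is exactly the point that makes the cancellation under $x_1=y_1$ literal rather than merely formal. One minor observation: the statement in the paper writes $z_1,z_2$ where it plainly means $x_1,x_2$ (consistent with the definition of $\mu_n$ and with how the lemma is later applied with $z_1=z_{m+1}$); your proof correctly treats these as the first arguments of each slot. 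An equally short alternative, closer in spirit to the rest of the paper, is to expand $h_l(x_1,\ldots)=\sum_{a\ge 0}x_1^a h_{l-a}(x_2,\ldots)$ and $e_{n-l}(y_1,\ldots)=e_{n-l}(y_2,\ldots)+y_1 e_{n-l-1}(y_2,\ldots)$ and let the resulting double sum telescope after substituting $y_1=x_1$; but the generating-function route avoids that bookkeeping entirely.
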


\begin{lemma} \label{lin combo}
Suppose $z_1 = z_{m+1}$. Then 
for any $1 \leq j \leq m$, 
\[
z_1M_{1,j}+ \sum_{i=2}^{m} e_{m-i}(\frac{1}{z_2}, \ldots, \frac{1}{z_m})M_{i,j} = 0. 
\]
\end{lemma}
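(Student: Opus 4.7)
My plan, under the hypothesis $z_1=z_{m+1}$, is to first apply Lemma \ref{mu z1} to every $\mu$-factor that occurs in either $M_{1,j}$ or any $M_{i,j}$; each such factor collapses to $\mu_n(z_2,\ldots,z_m;z_{m+2},\ldots,z_d)$, which I will denote $\tilde\mu_n$. I would also rewrite the elementary symmetric $e_{i-j-1}(1/z_{m+1},\ldots,1/z_d)$ appearing in the indicator term of $M_{i,j}$ via the single-variable split $e_n(y_0,y_1,\ldots)=e_n(y_1,\ldots)+y_0\,e_{n-1}(y_1,\ldots)$ with $y_0=1/z_1$, so that the $z_1$-dependence is made explicit.

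Next I would substitute into
\[
S_j \,:=\, z_1 M_{1,j} + \sum_{i=2}^m e_{m-i}(1/z_2,\ldots,1/z_m)\,M_{i,j}
\]
and interchange the order of summation inside each $M_{i,j}$ so that the index $c$ from $\sum_{c=1}^{i-1}$ becomes outer. For each fixed $c\ge 1$ the coefficient of $a_k\,h_{c-1}(z_{m+1},\ldots,z_d)\,\tilde\mu_{d-(2m-j)-k+c}$ in $S_j$ becomes $(\prod_{n=1}^m z_n)\sum_{l=0}^{m-c-1} e_l(1/z_2,\ldots,1/z_m)$. I would then combine these pieces, together with the row-$1$ contribution $z_1\sum_k a_k\,\tilde\mu_{d-(2m-j)-k}$, using the generating identity
\[
\sum_{n\ge 0}\tilde\mu_n\,t^n \;=\; \frac{\prod_{j=m+2}^d(1+tz_j)}{\prod_{i=2}^m(1+tz_i)},
\]
looking for a telescoping in the variables $1/z_2,\ldots,1/z_m$ that eliminates the denominator $\prod_{i=2}^m(1+tz_i)$ and reduces the whole expression to a polynomial whose relevant coefficient vanishes.

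Finally, I would treat the indicator contribution
\[
-\Bigl(\prod_{n=m+1}^d z_n\Bigr)\sum_{i=2}^{m} e_{m-i}(1/z_2,\ldots,1/z_m)\,\mathbf{1}(j>i)\,e_{i-j-1}(1/z_{m+1},\ldots,1/z_d)
\]
separately: after the split of $e_{i-j-1}$ into $z_1$-free and $z_1$-containing parts, I would apply identity \eqref{e h} with $\vec t=(1/z_2,\ldots,1/z_m)$ to collapse the sum on $i$, producing the precise residual needed to complete the cancellation against the $\tilde\mu$-part.

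The main obstacle I expect is the telescoping step: the partial sum $\sum_{l=0}^{m-c-1}e_l(1/z_2,\ldots,1/z_m)$ is not a standard symmetric-function identity target. Pinpointing the exact identity that makes the denominator $\prod_{i=2}^m(1+tz_i)$ drop out, or equivalently reorganizing the double sum so that identities \eqref{e h}, \eqref{ec}, \eqref{hc} apply iteratively by induction on the length of the $\tilde\mu$-tail, should be the technical heart of the argument.
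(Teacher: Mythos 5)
Your proposal starts along the same lines as the paper — invoke Lemma~\ref{mu z1} at $z_1=z_{m+1}$ to collapse each $\mu$-factor, and use the transfer $(\prod_{n=1}^m z_n)\,e_{m-i}(1/z_2,\ldots,1/z_m)=z_1 e_{i-1}(z_2,\ldots,z_m)$ — but then diverges crucially: you attempt to verify the relation directly on the entries $M_{i,j}$, which for $i\ge2$ carry a double inner sum over the block index $c$ and over $k$. As you observe, interchanging sums produces the partial-sum coefficient $\sum_{l=0}^{m-c-1} e_l$ (in $z_2,\ldots,z_m$ after the transfer), and there is no standard identity that makes this drop out. That obstruction is not incidental: it is exactly the complexity that the paper eliminates \emph{before} attempting the linear relation.

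The missing idea is Lemma~\ref{det M}. The paper first performs the triangular row and column operations that convert $M$ into $U+V$, where $U_{i,j}$ is a \emph{single} $\mu$-term (no inner sums over $c$ or $k$) and $V_{i,j}$ is a single $\mu$-term in the reciprocal variables. The linear relation $z_1(U+V)_{1,j}+\sum_{i\ge2}e_{m-i}(1/z_2,\ldots,1/z_m)(U+V)_{i,j}=0$ is then a clean, one-shot application of identity~\eqref{e h}: the $U$-part reduces to $z_1 e_{d-2m+j}(z_{m+2},\ldots,z_d)$, the $V$-part to its negative, and they cancel. In your version, by staying with $M$, you have superimposed the effect of those elementary operations back onto the entries, which reintroduces the inner $c$-sum and the partial sums $\sum_{l}e_l$ that you cannot telescope away. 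To repair the argument you should first establish the passage $M\rightsquigarrow U+V$ (or at minimum observe that the row relation you want is equivalent, up to the triangular change of basis from the row/column operations, to one on $U+V$), and only then run the symmetric-function cancellation. As written, your proposal identifies the obstruction correctly but does not surmount it, so the proof is incomplete.
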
 

\begin{proof}
We claim 
\[
e_{m-i}(\frac{1}{z_2}, \ldots, \frac{1}{z_m}) U_{i,j} =  z_1 e_{i-1}(z_2, \ldots,z_m) \sum_{l=0}^{d-2m+j-i+1} (-1)^l h_l(z_2, \ldots, z_m) e_{d-2m+j-i+1 - l}(z_{m+2}, \ldots, z_d).
\] 
For $i\geq 2$, we apply Lemma \ref{mu z1} when $z_1 = z_{m+1}$ to obtain  
\ppp
e_{m-i}(\frac{1}{z_2}, \ldots, \frac{1}{z_m}) U_{i,j} &= e_{m-i}(\frac{1}{z_2}, \ldots, \frac{1}{z_m}) (\prod_{n=1}^m z_n) z\sum_{l=0}^{d-2m+j-i+1} (-1)^l h_l(z_2, \ldots, z_m) e_{d-2m+j-i+1 - l}(z_{m+2}, \ldots, z_d). \\ 
\ooo
Then use
\[
e_{m-i}(\frac{1}{z_2}, \ldots, \frac{1}{z_m}) (\prod_{n=1}^m z_n) = z_1e_{i-1}(z_2, \ldots, z_m)
\]
to prove the claim for $i \geq 2$. 
For $i=1$, we have also by Lemma \ref{mu z1}
\[
z_1 U_{1,j} = z_1 \sum_{l=0}^{d-2m+j} (-1)^l h_l(z_2, \ldots, z_m) e_{d-2m+j - l}(z_{m+2}, \ldots, z_d).
\]
This proves the claim for $i =1$. 
Using the claim we express 
\ppp
&z_1 U_{1,j} +\sum_{i=2}^m e_{m-i}(\frac{1}{z_2}, \ldots, \frac{1}{z_m}) U_{i,j} \\ 
&= z_1 \sum_{i=1}^m e_{i-1}(z_2, \ldots,z_m) \sum_{l=0}^{d-2m+j-i+1} (-1)^l h_l(z_2, \ldots, z_m) e_{d-2m+j-i+1 - l}(z_{m+2}, \ldots, z_d).
\ooo
We re-arrange this to obtain 
\[
z_1 \sum_{l=0}^{d-2m+j}(-1)^l e_{d-2m+j-l}(z_{m+2, \ldots, z_d})\sum_{k=0}^{l} (-1)^k h_{k}(z_2, \ldots, z_m)e_{l-k}(z_2, \ldots, z_m)  
\]
where we use for $k \geq m$
\[
e_{k}(z_2, \ldots, z_m) = 0.
\]
Applying identity \eqref{e h} shows that the term for $l=0$ is the only non-zero contribution
\begin{equation} \label{U result}
z_1 e_{d-2m+j}(z_{m+2}, \ldots, z_d).
\end{equation}

Next consider 
\begin{equation} \label{V sum}
z_1 V_{1,j} +\sum_{i=2}^m e_{m-i}(\frac{1}{z_2}, \ldots, \frac{1}{z_m}) V_{i,j}. 
\end{equation}
Now 
\[
V_{1,j}=0
\] 
and for $ i \geq 2$ we apply Lemma \ref{mu z1} to obtain 
\ppp
V_{i,j} &= -z_1(\prod_{n=m+2}^d z_n) \sum_{l=0}^{i-j-1} (-1)^l h_l(\frac{1}{z_2}, \dots, \frac{1}{z_m})e_{i-j-1-l}(\frac{1}{z_{m+2}}, \ldots, \frac{1}{z_d}) \\
&= -z_1 \sum_{l=0}^{i-j-1} (-1)^l h_l(\frac{1}{z_2}, \dots, \frac{1}{z_m})e_{d-(m+1)-(i-j-1-l)}(z_{m+2}, \ldots, z_d) 
\ooo 
Thus expression \eqref{V sum} is equal to 
\[
\sum_{i=2}^m - e_{i-1}(\frac{1}{z_2}, \dots, \frac{1}{z_m})z_1 \sum_{l=0}^{i-j-1} (-1)^l h_l(\frac{1}{z_2}, \dots, \frac{1}{z_m})e_{d-(m+1)-(i-j-1-l)}(z_{m+2}, \ldots, z_d)
\]
which we re-arrange to obtain 
\[
-z_1 \sum_{l=0}^{m-j-1}(-1)^l e_{d-(m+1)-(m-j-1-l)}(z_{m+2}, \ldots, z_d)\sum_{k=0}^l (-1)^k h_k(\frac{1}{z_2}, \dots, \frac{1}{z_m}) e_{l-k}(\frac{1}{z_2}, \dots, \frac{1}{z_m}). 
\]
Applying identity \eqref{e h} shows that the term for $l=0$ is the only non-zero contribution 
\[
-z_1 e_{d-2m+j}(z_{m+2}, \ldots, z_d)
\]
which cancels out \eqref{U result}. Note that when $j=m$ both terms are 0. This completes the proof. 
\end{proof}

Now we can prove the factorization of $\displaystyle \det( J_{\vect{f}} (\vect{\alpha}))$. 
\begin{theorem} 
\begin{equation} \label{factorization}
\det( J_{\vect{f}} (\vect{\alpha})) = \frac{\prod_{j=1}^m \prod_{i=m+1}^d (z_i - z_j)}{ (a_m\prod_{i=1}^d z_i)^m } 
\end{equation}
\end{theorem}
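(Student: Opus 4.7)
The plan is to invoke the reduction $\det(J_{\vect f_m})(\vect\alpha)\cdot a_m^m\bigl(\prod_{n=1}^d z_n\bigr)^m = \det(M)$ stated just before the definition of $M$, and then via Lemma \ref{det M} to reduce the theorem to the polynomial identity
\[
\det(U+V) \;=\; \prod_{j=1}^m \prod_{i=m+1}^d (z_i - z_j);
\]
the theorem then follows by dividing through by $a_m^m(\prod_{n=1}^d z_n)^m$.

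The first half is a symmetry-and-vanishing argument. Each entry of $M$ is symmetric separately in $\{z_1,\ldots,z_m\}$ and in $\{z_{m+1},\ldots,z_d\}$: the $\mu_n$, the $h_{c-1}(z_{m+1},\ldots,z_d)$, and the combination $e_{i-j-1}(1/z_{m+1},\ldots,1/z_d)\prod_{n=m+1}^d z_n = e_{d-m-(i-j-1)}(z_{m+1},\ldots,z_d)$ all respect these symmetries, as do $\prod_{n=1}^m z_n$ and, via $(-1)^k a_k/a_0 = e_k(1/z_1,\ldots,1/z_d)$, the coefficients $a_k$ themselves. By Lemma \ref{lin combo}, the rows of $M$ satisfy a non-trivial linear relation at $z_1 = z_{m+1}$ with coefficient $1$ on row $m$, so $\det(M)$ vanishes on that hyperplane. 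The symmetries propagate the vanishing to every hyperplane $z_j = z_i$ with $1\le j \le m < i \le d$, and since the $m(d-m)$ linear forms $z_i - z_j$ are pairwise coprime, their product divides $\det(M)$ (after clearing the denominators introduced by the $a_k$ uniformly by multiplying by a power of $\prod_{i=1}^d z_i$).

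The remaining and main obstacle is to show that the cofactor is the constant $1$. I would handle this by a leading-coefficient computation inside $\det(U+V)$. Expanding by permutations and using that $V$ is strictly lower triangular, with entries that simplify via $e_l(1/z_{m+1},\ldots,1/z_d)\prod_{n=m+1}^d z_n = e_{d-m-l}(z_{m+1},\ldots,z_d)$ to honest polynomials in the outer variables, the sum reorganizes into a product of shifted-elementary-symmetric, Jacobi--Trudi type subdeterminants. Matching a single distinguished monomial, for instance the coefficient of $\prod_{i=m+1}^d z_i^{m}$, or equivalently specializing to a convenient limit such as $z_1,\ldots,z_m \to 0$ where the problem collapses to an explicit Vandermonde determinant, then pins the constant at $1$.

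The hardest part will be this constant-matching step: keeping track of the interleaved $U$- and $V$-contributions together with the mixed-degree behavior introduced by the $a_k$'s requires some care, and a sanity check against small cases (for example $m=1$, where $\det(M)$ collapses to the single entry $\mu_{d-1}(z_1;z_2,\ldots,z_d)=\prod_{i=2}^d(z_i-z_1)$) is advisable. The divisibility half, by contrast, is essentially automatic once Lemma \ref{lin combo} and the block symmetries are in hand.
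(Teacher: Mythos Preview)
Your proposal is correct and follows essentially the same route as the paper: reduce to $\det(U+V)$ via Lemma \ref{det M}, use Lemma \ref{lin combo} plus the two block symmetries to get divisibility by $\prod_{j\le m<i}(z_i-z_j)$, and pin the cofactor by computing the coefficient of $\prod_{i=m+1}^d z_i^{\,m}$. The paper carries out that last step by a direct permutation argument (only the cycle $\sigma(j)=j+1$, $\sigma(m)=1$ contributes the top monomial, with coefficient $(-1)^{m-1}\cdot(-1)^{m-1}=1$), which also supplies the degree bound you implicitly need; your suggested alternative of specializing $z_1,\dots,z_m\to 0$ is not safe here, since $V$ involves $h_l(1/z_1,\dots,1/z_m)$ and blows up in that limit.
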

\begin{proof}
Lemma \ref{det M} implies that left side of equation \eqref{factorization} is a symmetric polynomial $P$ in $z_{m+1}, \ldots, z_d$ whose coefficients are symmetric Laurent polynomials in $z_1, \ldots, z_m$. 

We claim the total degree of $P$ in $z_{m+1}, \ldots, z_d$ is $(m,\ldots, m)$. Each $z_k$ for $m+1 \leq k \leq d$ appears in $U_{i,j}$ with exponent 1 or 0, and the only entry of $U$ that  contains the term 
\begin{equation} \label{term}
\prod_{n=m+1}^d z_n
\end{equation}
is $U_{1,m}$. The coefficient of this term in $U_{1,m}$ is 1. Each entry $V_{i,j}$ for $i>j$ contains the term \eqref{term}. In the expansion of $\det(U+V)$ as a sum over the symmetric group $S_m$, for a $\sigma \in S_m$ take a product of entries 
\[
\prod_{i=1}^m (U+V)_{\sigma(j),j}.
\]
We consider which $\sigma$ yield 
\begin{equation} \label{max term}
\prod_{n=m+1}^d z_n^m.
\end{equation}
Suppose $1 \leq j \leq m-1$. If $\sigma(j) \leq j$ for then $(U+V)_{\sigma(j),j}$ does not contain the term \eqref{term}. Thus suppose $\sigma(j)> j$. Let $j_0$ be the greatest $j$ such that $\sigma(j)>j+1$. Then $\sigma(\sigma(j_0)-1)\leq j$ and thus 
\[
(U+V)_{\sigma(\sigma(j_0)-1), \sigma(j_0)-1}
\]  
does not contain term \eqref{term}. Therefore the only $\sigma$ that yields the highest degree term \eqref{max term} is given by $\sigma(j) =j+1$ and $\sigma(m)=1$. The coefficient of the term \eqref{term} in $P$ is thus 
\ppp
(-1)^{m-1} \mathrm{sgn}  
&= (-1)^{m-1} (-1)^{m-1} \\ 
&= 1.
\ooo

Lemma \ref{lin combo} gives non-trivial linear combination of the rows of $U+V$ when $z_1 = z_{m+1}$. Therefore $P$ is equal to 0 when $z_{m+1}=z_1$ and so has a factor of 
\[
z_{m+1}-z_1.
\]
By symmetry $P$ has factors of 
\[
(z_i - z_j)
\]
for each pair $i,j$ with $1 \leq j \leq m$ and $ m+1 \leq i \leq d$. 
Thus 
\[
P = L(z_1, \ldots, z_m )\prod_{j=1}^m \prod_{i=m+1}^d (z_i - z_j)
\]
where $\displaystyle L(z_1, \ldots, z_m)$ is some Laurent polynomial. From the above claim this $L$ is equal to 1. This completes the proof. 
\end{proof}

\section{$V(0,2) = \mathrm{attractors}(0,2)$} \label{set eq}

\begin{definition} \label{m n X}
For integers $m,n \geq 0$ and a sequence $\vec{y} = (y(i))_{i=0}^{m-n}$ of indeterminates $y(i)$
define the $m \times n$ matrix $B(m,n,\vec{y})$ by 
\[
B(m,n,\vec{y})_{i,j} = \begin{cases} 
y(i-j) \text{ if } 0 \leq i-j \leq m-n \\ 
0 \text{ otherwise } 
\end{cases}.
\]

Given sequences of integers $\vec{m} =(m(i))_{i=1}^k$ and $\vec{n} =(n(i))_{i=1}^l$, denote the lengths $k$ and $l$ by $|\vec{m}|$ and $\vec{n}|$ respectively. 
Given also a matrix $X$ of sequences $\vec{x}_{i,j}$ for $1 \leq i \leq k$ and $ 1 \leq j \leq l$ with 
\[
X_{i,j} = \vec{x}_{i,j} = (x_{i,j}(h))_{h=0}^{m(i) - n(j)},
\]
define the block matrix $M( \bar{X})$ to be the block matrix whose $(i,j)$-th block is the matrix $B(m(i), n(j), \vec{x}_{i,j})$. 

Let $\bar{X}$ denote the triple $(\vec{m},\vec{n}, X)$. 

We will use the special sequences 
\begin{align*}
\vec{m}_d &= (1+{d \choose 2}+2-2i)_{i=1}^{d-1} \\ 
\vec{m}_d' & =(1+{d \choose 2}-2i)_{i=1}^{d-1} \\ 
\vec{n}_d &= (1+{d-1 \choose 2} +1-j)_{j=1}^{d-1}
\end{align*}

\end{definition} 

\begin{definition}
Denote the composition $c$ of an integer $h$ into $r$ non-negative integer parts by  
\[
c = (c(1), c(2), \ldots, c(r)).
\] 
Let $|c|$ denote $r$. Let $C(h,r)$ denote the set of such compositions. 
\end{definition}

\begin{definition} \label{d null vec v}

Given $\vec{m}, \vec{n}$ and $X$ as in Definition \ref{m n X}, suppose $|\vec{m}| +1= |\vec{n}|=d-1$. 
Fix an integer $j_0$ for $1 \leq j_0 \leq d-1$, let $S(j_0)$ denote the set of bijections from $[d-1] \backslash \{ j_0 \}$ to $[d-2]$. Note that we may identify a permutation $\tau_\sigma$ of $[d-2]$ with an element $\sigma \in S(j_0)$ via 
\[
\sigma(j) = \begin{cases} \tau(i) &\text{ if } j<j_0\\ 
\tau(j-1) & \text{ if } j>j_0.
\end{cases}
\]
 For $\sigma \in S(j_0)$, define $\#\mathrm{inv}(\sigma)$ to be the number of involutions in the sequence 

\[
(\sigma(1), \ldots, \sigma(j_0-1), \sigma(j_0+1), \ldots, \sigma(d-1)),
\]

so the signature 
\[
\mathrm{sgn}(\tau_\sigma) = (-1)^{\# \mathrm{inv}(\sigma)}.
\]

 For $\sigma \in S(j_0)$ and $ c \in C(l,d-2)$ for any $l$, define $\pi(c,\sigma, \bar{X})$ to be 
\[
\pi(c,\sigma, \bar{X}) = (-1)^{\#\mathrm{inv}(\sigma)}\prod_{i=1}^{d-2} x_{i,\sigma^{-1}(i)}(m(i) - n(\sigma^{-1}(i))-c(i))
\]
where $x_{i,j}(h)$ denotes 0 if $h >m(i)-n(j)$ or if $h<0$. 
Let $\vec{v}_j(\bar{X})$ denote the vector 
\[
\vec{v}_j(\bar{X}) = (v_j(l,\bar{X}))_{l=0}^{n(j)-1}.
\]
where 
\begin{align} \label{vik}
v_{j_0}(l,\bar{X}) & = (-1)^{j_0-1}\sum_{c \in C(j_0, n(j) - 1-l,d-1)} \sum_{\sigma \in S(j_0)} \pi(c, \sigma,\bar{X}) \\ 
& = (-1)^{j_0-1}\sum_{c \in C( n(j_0) - 1-l,d-2)} \det (L(j_0,c,X))
\end{align}
where the matrix $L(j_0,c,\bar{X})$ is the submatrix obtained from $M(\bar{X})$ by removing column block $j_0$, and then from row block $i$, removing all rows except for the $(m(i) - c(l))$-th row in that row block.  
Let $\vec{v}(\bar{X})$ denote the block vector (concatenation) of $\vec{v}_1(\bar{X}), \ldots, \vec{v}_{d-1}(\bar{X})$. 
\end{definition}

\begin{lemma} \label{l sum u}
Let $W(d)$ be the $(d-2)\times (d-1)$ matrix 
\[
W(d)_{i,j} =d-2-2i+j. 
\]
Let $\sigma$ be a surjection 
\[
\sigma \colon [d-1] \rightarrow [d-2].
\]
There are exactly two integers $u, v \in [d-1]$ such that 
\[
\sigma(u) = \sigma(v).
\]
Then 
\begin{equation} \label{sum u}
\sum_{1 \leq i \leq d-1, i \neq u } W(d) _{ \sigma(i),i}  =  {d -1 \choose 2} +1- u 
\end{equation}
\end{lemma}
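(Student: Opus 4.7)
The plan is to prove the identity by a direct computation, using the defining formula $W(d)_{\sigma(i),i} = d-2-2\sigma(i)+i$. Expanding the sum over the $d-2$ indices $i \in [d-1]\setminus\{u\}$ yields
\[
\sum_{i \neq u} W(d)_{\sigma(i),i} = (d-2)^2 - 2\sum_{i \neq u} \sigma(i) + \sum_{i \neq u} i.
\]

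The crucial observation is that $\sigma$ restricted to $[d-1]\setminus\{u\}$ is a bijection onto $[d-2]$. Indeed, $\sigma$ is surjective and $|[d-1]| = |[d-2]|+1$, so exactly one value in $[d-2]$ has two preimages; those are the two integers $u$ and $v$. Removing $u$ leaves every element of $[d-2]$ with a preimage (the doubly-covered one still has $v$), and the resulting map between sets of equal size $d-2$ is injective, hence bijective. Consequently,
\[
\sum_{i \neq u} \sigma(i) = 1 + 2 + \cdots + (d-2) = \binom{d-1}{2},
\]
independently of $\sigma$ and $u$. The second sum is elementary: $\sum_{i \neq u} i = \binom{d}{2} - u$.

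Substituting these two sums into the expansion gives
\[
(d-2)^2 - 2\binom{d-1}{2} + \binom{d}{2} - u,
\]
and routine arithmetic reduces the first three terms to $\binom{d-1}{2}+1$, completing the proof. There is no real obstacle; the only content is the bijection argument in the previous paragraph, and the rest is a short symbolic simplification.
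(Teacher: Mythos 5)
Your proof is correct. Expanding $W(d)_{\sigma(i),i}=d-2-2\sigma(i)+i$ over the $d-2$ indices $i\neq u$ is legitimate, the observation that $\sigma$ restricted to $[d-1]\setminus\{u\}$ is a bijection onto $[d-2]$ (so $\sum_{i\neq u}\sigma(i)=\binom{d-1}{2}$) is exactly right, and the final arithmetic $(d-2)^2-2\binom{d-1}{2}+\binom{d}{2}-u=\binom{d-1}{2}+1-u$ checks out.

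You take a somewhat more streamlined route than the paper. The paper first verifies the identity for the particular ``monotone'' surjection $\sigma(i)=i$ for $i<u$, $\sigma(i)=i-1$ for $i\geq u$, by directly summing the two ranges, and then argues that any other admissible $\sigma$ is reached from this one by transposing values at non-$u$ positions, each transposition leaving the sum unchanged. Your version collapses the base case and the invariance argument into a single step: since $W(d)_{\sigma(i),i}$ is affine in $\sigma(i)$ with a coefficient that does not depend on $i$, the whole sum depends on $\sigma$ only through $\sum_{i\neq u}\sigma(i)$, which the bijection forces to be the fixed constant $\binom{d-1}{2}$. Both proofs ultimately rest on this same invariance, but yours makes the invariance explicit as a closed-form formula and thereby avoids the two-stage (base case plus transposition) structure of the paper's argument; it is shorter and arguably easier to check. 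Nothing is gained or lost in generality.
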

\begin{proof} 
For $ 1 \leq u \leq d-1$, suppose $\sigma$ is of the form 
\[
\sigma(i) = \begin{cases} 
i \text{ if } i < u \\ 
i-1 \text{ if } i \geq u
\end{cases}.
\]
Then the left side of \eqref{sum u} becomes 
\begin{align*}
\sum_{i=1}^{u-1} (d-2-i) + \sum_{i=u+1}^{d-1} d-i &= -(u-1)+\sum_{i=2}^{d-1} (d-i) \\ 
&= -(u-1)+ {d-1 \choose 2}.
\end{align*}
This proves \eqref{sum u} for such $\sigma$. 
Now any $\sigma$ with 
\[
|\sigma^{-1}(\sigma(u))|=2
\]
can be obtained from the $\sigma$ described above by a sequence of transpositions of the values at the non-$u$ elements. Such a transposition, by construction of $W(d)$, does not change the sum \eqref{sum u}. This completes the proof. 
\end{proof}

\begin{lemma} \label{l null vec v}
Let $\bar{X}$ denote the triple $(\vec{m}_d', \vec{n}_d, X)$ for an arbitrary matrix of indeterminates.
Then the vector $\vec{v}(\bar{X})$ from Definition \ref{d null vec v} is in the null space of $M(\bar{X})$. 
\end{lemma}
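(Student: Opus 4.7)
The plan is to verify $M(\bar X)\vec v(\bar X) = 0$ row by row. Fix a row $r$ of $M(\bar X)$, sitting in some row block $i_0$ at position $r_0$ within that block, and split the inner product along column blocks:
\[
r \cdot \vec v(\bar X) \;=\; \sum_{j_0 = 1}^{d-1} r|_{j_0} \cdot \vec v_{j_0}(\bar X),
\]
where $r|_{j_0}$ denotes the restriction of $r$ to the columns of block $j_0$. Substituting the determinantal formula \eqref{vik} for $v_{j_0}(l,\bar X)$ and interchanging the order of summation, the inner sum $r|_{j_0} \cdot \vec v_{j_0}(\bar X)$ becomes $(-1)^{j_0-1}$ times a sum, over compositions $c$, of exactly the cofactor expansions of $(d-1)\times(d-1)$ augmented matrices $\widetilde L(j_0, c, \bar X)$ obtained by adjoining $r|_{j_0}$ as an extra row to $L(j_0, c, \bar X)$. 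This reduces the lemma to the statement that, for each composition $c$, the alternating sum over $j_0$ of $\det \widetilde L(j_0, c, \bar X)$ vanishes.

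The key recognition is that this alternating sum is precisely the block-Laplace expansion, across column blocks, of a single $(d-1)\times(d-1)$ determinant of a submatrix $N_{c,r}$ of $M(\bar X)$: namely, the matrix whose rows consist of the single row $m(i)-c(i)$ selected from each row block $i$, together with the row $r$ itself adjoined as an extra row. Since $r$ is already a row of $M(\bar X)$ inside row block $i_0$ at position $r_0$, the matrix $N_{c,r}$ contains two identical rows whenever $c$'s selection in block $i_0$ coincides with $r_0$, so $\det N_{c,r} = 0$ for such $c$. For the remaining compositions, the contributions pair up and cancel by exchanging which copy of row $r$ is being counted, so the total alternating sum collapses to zero.

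The main obstacle is reconciling sign conventions. The block sign $(-1)^{j_0-1}$ from \eqref{vik} must be matched against the positional signs arising from Laplace expansion across column blocks of varying widths $n(j)$, together with the row-position signs coming from the choice of row $m(i)-c(i)$ within each row block. These are precisely the quantities that Lemma \ref{l sum u} tallies via the identity $\sum_{i} W(d)_{\sigma(i),i} = \binom{d-1}{2}+1-u$. I expect Lemma \ref{l sum u} to be invoked to verify that the column-block sign $(-1)^{j_0-1}$, the varying column-block widths, and the row-position offsets within each row block combine to give exactly the sign of the enlarged permutation appearing in the expansion of $\det N_{c,r}$, at which point the vanishing of $\det N_{c,r}$ closes the argument.
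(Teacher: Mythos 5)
Your overall picture --- a sign-reversing cancellation reminiscent of ``a determinant with two equal rows vanishes'' --- is the right intuition, and the paper's argument is indeed a sign-reversing involution on the index data $(k,j,c,\sigma)$. But your specific reduction does not go through. You claim that for each fixed composition $c$, the alternating sum over $j_0$ is the Laplace expansion of a single $(d-1)\times(d-1)$ matrix $N_{c,r}$ and therefore vanishes; yet you also note that $N_{c,r}$ has two equal rows only when $c$'s selection in block $i$ happens to coincide with the row $r$. Those two statements are in tension: for generic $c$ the matrix $N_{c,r}$ has no repeated rows, its determinant is a nonzero polynomial in the indeterminates $x_{i,j}(\cdot)$, and the per-$c$ alternating sum does \emph{not} vanish. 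The cancellation is unavoidably \emph{across} compositions. In the paper's involution the partner of $(k,j,c,\sigma)$ is $(k',j',c',\sigma')$ with $c'(l)=c(l)$ for $l\neq i$ but $c'(i)=m(i)-n(j)-(s-k)$: the composition changes at index $i$. What is being swapped is the role of the adjoined row (the factor $x_{i,j}(s-k)$) with that of the selected row of block $i$ (the factor $x_{i,\sigma^{-1}(i)}(m(i)-n(\sigma^{-1}(i))-c(i))$), and this swap moves the term to a different $c$. The product of entries is invariant under the swap; the work is in checking the sign and the validity of the new index tuple.

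Your attribution of Lemma~\ref{l sum u}'s role is also off. The sign-reversal of the involution is verified directly by an inversion count: moving $i$ from position $j$ to position $j'$ in the sequence $(\sigma(1),\ldots,\sigma(d-1))$ changes the inversion status of exactly the $|j-j'|-1$ intervening positions, which forces a single net sign flip. Lemma~\ref{l sum u} enters at a different, essential point: proving $k'\geq 0$, i.e.\ that the image of the involution is actually a term appearing in the sum. If this range condition failed, the putative partner would simply not exist and the cancellation would break. Summing condition~\eqref{nc 3} over $l$ and invoking Lemma~\ref{l sum u} gives $n(j')-1-k' \leq {d-1 \choose 2}+1-j' = n(j')-1$, hence $k'\geq 0$. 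This is a genuinely combinatorial bound on the total size of the composition, not a bookkeeping of Laplace-expansion signs.
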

\begin{proof}
We must prove that 
\[
\sum_{j=1}^{d-1}\sum_{k=0}^{n(j)-1}x_{i,j}(s-k)v_j(k,\bar{X})=0
\]
 for each $i$ and $s$. 
 
Fix $i$ and $s$. For each $k$ and $j$, expand $v_j(k)$ according to \eqref{vik} and choose a $c \in C(n(j) - 1-k,d-2)$ and $\sigma \in S(j)$. We thus have a set of terms indexed by the 4-tuple 
\[
(k,j,c,\sigma)
\]
where we assume
\begin{align}
0 &\leq s-k \leq m(i) - n(j)  \label{nc 1} \\ 
0 &\leq k \leq n(j)-1 \label{nc 2} \\ 
0 &\leq c(l) \leq m(l) - n(\sigma^{-1}(l)) \text{ for } 1 \leq l \leq d-2.  \label{nc 3}
\end{align}
for otherwise the corresponding term is 0. We construct a sign-changing involution by constructing indices $k', j', c'$ and $\sigma'$.

 Let 
 \[
 j' = \sigma^{-1}(i)
 \]
 and let 
 \[
 k'=s -  (m(i) - n(\sigma^{-1}(i)) - c(i)).
 \]
  Let $c'$ be the composition obtained from $c$ by setting 
\[
c'(i) = m(i) - n(j) -(s-k).
\] 
Let $\sigma' \in S(j')$ be determined as follows. Take the sequence 
\begin{equation} \label{sigma seq}
(\sigma(1), \ldots, \sigma(j-1), i, \sigma(j+1), \ldots, \sigma(r+1))
\end{equation}
and remove the entry of the form $\sigma(j')$, i.e. the other element that is equal to $i$. The resulting sequence is a permutation of $[d-2]$ which we identify with an element of $S(j')$. Note 
\begin{align*} 
\sigma'^{-1}(i) &= j \\ 
 \sigma^{-1}(i) &= j'\\ 
 c' &\in C(n(j') -1 - k', d-2).
 \end{align*}  

Assuming $(k,j,c,\sigma)$ satisfies conditions \eqref{nc 1}, \eqref{nc 2}, and \eqref{nc 3}, we prove that $(k',j',c',\sigma')$ does as well. For \eqref{nc 1},  the inequality 
\[
0 \leq s-k' \leq m(i) - n(j')
\]
follows from 
\[
0 \leq c(i)  \leq m(i) - n(\sigma^{-1}(i)) = m(i) - n(j').
\]

For \eqref{nc 3}, the only condition that is changed is when $l =i$, and 
\[
0 \leq c'(i) \leq m(i) - n(\sigma'^{-1}(i)))
\]
follows from 
\[
0 \leq s-k \leq m(i) - n(j) =  m(i) - n(\sigma'^{-1}(i)). 
\]

For \eqref{nc 2}, 
\[
0 \leq \sum_{l=1}^{d-2} c'(l) = n(j') - 1- k'
\]
which implies $k \leq n(j') -1$. To prove 
\begin{equation} \label{k' bound}
0 \leq k' ,
\end{equation}
consider the matrix $W(d)$ as in Lemma \ref{sum u}. The entry $W(d)_{i,j}$ is equal to $m(i) - n(j)$; if this number is non-negative, it is the maximum value of a composition part chosen for the $(i,j)$-th block that results in a non-zero term (i.e. condition \eqref{nc 3}). A negative entry $W(d)_{i,j}$ corresponds to an all-zero block matrix at position $(i,j)$. Condition \eqref{nc 3} implies $x_{l, \sigma'^{-1}(l)}(c'(l)) \neq 0$, so we must have
\[
c'(l) \leq W(d)_{l, \sigma'^{-1}(l)}.
\]
Summing $1 \leq l \leq d-2$, we have 
\begin{align}
n(j') - 1 -k' &= \sum_{l=1}^{d-2} c'(l)  \nonumber \\ 
   &\leq \sum_{l=1}^{d-2} W(d)_{l, \sigma'^{-1}(l)} \nonumber \\ 
   & = {d-1 \choose 2}+1 - j' \label{use sum u} \\ 
   & = n(j') - 1 \nonumber 
\end{align}
where we have applied Lemma \ref{sum u} at line \eqref{use sum u}. This proves \eqref{k' bound}. 

Now we prove that the involution is sign-reversing, i.e.. 
\begin{equation} \label{sign change}
(-1)^{j-1+ \#\mathrm{inv}(\sigma)} = -(-1)^{j'-1+ \#\mathrm{inv}(\sigma')} .
\end{equation}
The only elements in the sequence \eqref{sigma seq} that change their inversion status with respect to $i$ under the change from $\sigma$ to $\sigma'$ are those between the $j$-th and $j'$-th positions. There are $|j-j'|-1$ such positions. Suppose $k_1$ of these elements are inversions in $\sigma$ and $k_2$ are inversions in $\sigma'$. Therefore 
\begin{align*}
 \#\mathrm{inv}(\sigma) -  \#\mathrm{inv}(\sigma') &= k_1 - k_2\\ 
    &  \equiv k_1+k_2 \mod 2, 
\end{align*}
and 
\[
k_1+k_2 = |j-j'|-1.
\]
This proves \eqref{sign change} and completes the proof. 
\end{proof}

\begin{definition} 
With $\bar{X}$ as in Lemma \ref{l null vec v}, we call $v(\bar{X})$ the rank-independent null vector of $M(\bar{X})$. 
\end{definition}

\begin{definition} \label{d Xf}


For integers $0 \leq j \leq d$,
\[
\vec{x}_{j} = (x_{j}(l))_{l=0}^{d-j}
\]
where 
\[
x_{j}(l) = -t^{\lfloor \frac{j-1}{2} \rfloor}{l+\lfloor \frac{j-1}{2} \rfloor \choose \lfloor \frac{j-1}{2} \rfloor }\frac{a_{l+j}}{a_2},
\]
\[
t = \frac{a_0}{a_1},
\]
and 
\[
{ -1 \choose -1}
\]
denotes 1. 

Let $\emph{system}(f)$ denote the $M(\vec{m}_d, \vec{n}_d, X)$ where 
\[
X_{i,j} = \vec{x}_{2i-j}.
\]
Let $\mathrm{system}'(f)$ denote the matrix obtained from $\emph{system}(f)$ by removing the first row block. Then  $\mathrm{system}'(f)$ is satisfies the premises of Lemma \ref{l null vec v}; let $v(f)$ denote the corresponding rank-independent null vector of $\mathrm{system}'(f)$.
\end{definition} 

\begin{definition}
For an integer $1 \leq i \leq d-1$, $1 \leq r\leq d$,let
\begin{align*}
\mathrm{source}(i) &= (\lceil \frac{i+1}{2} \rceil,  -\lfloor \frac{i-1}{2} \rfloor ) \\ 
\mathrm{sink}(i, r) &=(i, d-i-r)\\ 
\end{align*}
denote points in $\mathbb{Z}^2$. 

Let $\emph{coord}_1$ denote the mapping 
\[
\emph{coord}_1 \colon \mathrm{sink}(i, r) \mapsto i.
\]
 \end{definition} 
 
 \begin{definition} \label{d graphs}
Fix integers $1 \leq i_0,j_0 \leq d-1$. Let $k \geq 0$ be an integer. 

For $c \in C(k,d-1)$, let $\vec{B}_{c,d}$ denote the sequence
\begin{align*} 
\vec{B}_{c, d} = (\mathrm{sink}(i,c(i)))_{i=1}^{d-1}, 
\end{align*}
and for $c \in C(k,d-2)$ let $\vec{B}_{i_0,c,d}$ denote the sequence 
\[
\vec{B}_{i_0, c, d} = (\mathrm{sink}(i+\mathbf{1}(i\geq i_0),c(i)))_{i=1}^{d-2}, 
\]

We will define sets of directed graphs. Let the vertex set for each graph be $\mathbb{Z}^2$. Let $E$ denote the set of directed edges of the form
\[
((i,j) \rightarrow  (i,j+1))
\]
or 
\[
((i,j) \rightarrow (i+1, j+1))
\]
for $i,j \in \mathbb{Z}$. 

For $c \in C(k,d-1)$, let $\mathrm{VD}(c)$ denote the set of directed graphs $G$ that consist of $d-1$ paths $G_1, \ldots, G_{d-1}$, such that path $G_l$ begins at $\mathrm{source}(l)$, and either

a) $G_l$ ends at a vertex $\mathrm{sink}(G_l) \in B_{ c,d}$ with each edge in $E$, or

b) $l=2i$ , $c(i) = d$, and $G_{l}$ consists of one directed edge to $\mathrm{sink}(G_{l}) = \mathrm{sink}(i,d)$; 
and the set of paths that satisfy a) are pairwise vertex disjoint.

Let $\mathrm{VD}(i_0, j_0, c)$ denote the set of directed graphs $G$ that consist of $d-2$ paths $G_1, \ldots, G_{d-2}$, such that path $G_l$ begins at  $\mathrm{source}(l + \mathbf{1}(l \geq j_0))$ and either

a) $G_l$ ends at a vertex $\mathrm{sink}(G_l) \in B_{c,d}$ with each edges in $E$, or

b) $l+\mathbf{1}(l \geq j_0) =2i$ for some integer $i$ , $c(i-\mathbf{1}(i \geq i_0)) = d$, and $G_{l}$ consists of one directed edge to $\mathrm{sink}(G_{l}) = \mathrm{sink}(i,d)$; 
and the set of paths that satisfy a) are pairwise vertex disjoint.

For $G \in \mathrm{DG}(i_0,j_0,c)$ or $G \in \mathrm{DG}(c)$, let $|G|$ denote the number of paths in $G$.

Let $\#\mathrm{inv}(G)$ denote the number of inversions in the sequence
\[
(\emph{coord}_1(\mathrm{sink}(G_l))_{l=1}^{|G|}.
\]
Let $\epsilon(i,j)$ denote the integer 
\[
\epsilon(i,j) = \lfloor \frac{j}{2} \rfloor - (i-1)+\sum_{l=1}^{d-1} (l-1 - \lfloor \frac{l}{2} \rfloor)
\]
Define 
\[
w(c) = \prod_{ l = 1}^{|c|} (\frac{-a_{d-c(l)}}{a_2})
\]

\end{definition}

\begin{lemma} \label{l vd paths}

For $1 \leq j_0 \leq d-1$,
\begin{equation} \label{vd paths}
v_{j_0}(n(j_0) - k, f) =(-1)^{j_0-1} (\frac{a_0}{a_1})^{\epsilon(1,j_0)} \sum_{c \in C(k,d-2)} w(c)\sum_{G \in \mathrm{VD}(1,j_0,c)} (-1)^{\# \mathrm{inv}(G)}
\end{equation}
\end{lemma}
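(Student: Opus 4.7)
The plan is to apply the Lindstr\"om-Gessel-Viennot (LGV) lemma to each determinant in the expansion of $v_{j_0}(n(j_0)-k, f)$ coming from Definition \ref{d null vec v}. First I would unfold $v_{j_0}(n(j_0)-k, f)$ as $(-1)^{j_0-1}$ times a sum over compositions $c$ of $\det L(j_0, c, X')$, where $L(j_0, c, X')$ is the $(d-2)\times(d-2)$ matrix whose $(i,j')$-entry is $x_{2i+2-\phi(j')}(\vec{m}_d'(i) - \vec{n}_d(\phi(j')) - c(i))$ and $\phi \colon [d-2] \to [d-1]\setminus\{j_0\}$ is the order-preserving bijection.

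Next I would compute this entry in closed form by substituting Definition \ref{d Xf}. Using the identities $\vec{m}_d'(i) - \vec{n}_d(j) = d - 2 - 2i + j$ and $\lfloor(2i+1-j)/2\rfloor = i - \lfloor j/2\rfloor$, a direct calculation gives
\[
L(j_0, c, X')_{i, j'} = -t^{\,i - \lfloor \phi(j')/2\rfloor}\binom{d - i - 2 - c(i) + \lceil \phi(j')/2 \rceil}{i - \lfloor \phi(j')/2\rfloor}\,\frac{a_{d-c(i)}}{a_2},
\]
where $t = a_0/a_1$. By standard lattice-path enumeration the binomial factor counts paths on the edge set $E$ from $\mathrm{source}(\phi(j'))$ to $\mathrm{sink}(i+1, c(i))$, with each diagonal step weighted by $t$. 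The degenerate one-edge case of Definition \ref{d graphs} b) (triggered by $c(i)=d$ and $\phi(j')=2(i+1)$) falls out of the convention $\binom{-1}{-1}=1$, yielding the single surviving entry $x_0(0)=-a_1/a_2$.

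Third I would invoke LGV. Treating $L(j_0, c, X')$ as a path-count matrix with rows indexed by sinks $\mathrm{sink}(i+1, c(i))$ and columns by sources $\mathrm{source}(\phi(j'))$, expanding the determinant and applying LGV cancellation leaves precisely the signed sum over vertex-disjoint path families $G \in \mathrm{VD}(1, j_0, c)$. The sign $(-1)^{\#\mathrm{inv}(G)}$ agrees with the permutation sign in the determinant, since the sequence $(\mathrm{coord}_1(\mathrm{sink}(G_l)))_{l=1}^{d-2} = (\tau(l) + 1)_{l=1}^{d-2}$ has the same inversion count as the underlying $\tau$, and the product of sink weights $\prod_i (-a_{d-c(i)}/a_2)$ is exactly $w(c)$.

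Finally I would extract the overall $t$-power. The total exponent $\sum_{i=1}^{d-2}(i - \lfloor \phi(\tau(i))/2\rfloor)$ is independent of $\tau$, because $\phi\circ\tau$ traverses $[d-1]\setminus\{j_0\}$ as $\tau$ ranges over permutations of $[d-2]$; it evaluates to
\[
\binom{d-1}{2} - \sum_{l \in [d-1]\setminus\{j_0\}}\lfloor l/2\rfloor = \lfloor j_0/2\rfloor + \sum_{l=1}^{d-1}(l - 1 - \lfloor l/2\rfloor) = \epsilon(1, j_0).
\]
Pulling this $\tau$-independent power of $t$ out of the sum, and combining with the previous steps, yields the lemma. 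The hard part is the bookkeeping: verifying the closed form for the matrix entries (with the odd/even parity split for $\phi(j')$), confirming that the degenerate entries truly match case b) of Definition \ref{d graphs}, and checking the sign match between LGV and the inversion count defined via $\mathrm{coord}_1$ under the chosen source-sink pairing.
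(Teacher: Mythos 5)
Your computations are on the right track and essentially follow the paper's route: unfold $v_{j_0}(n(j_0)-k,f)$ into a sum of determinants $\det L(j_0,c,\bar X)$, compute the entries in closed form from Definition \ref{d Xf}, factor out $w(c)$ and a power of $t=a_0/a_1$, recognize the remaining binomial entries as path counts, apply Lindstr\"om–Gessel–Viennot, and verify that the $t$-power equals $\epsilon(1,j_0)$. Your entry formula and your $\tau$-independence argument for the $t$-exponent both check out against the definitions, and your degenerate-case observation about $\binom{-1}{-1}=1$ matching the single-edge paths of case b) is correct.

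However, there is a genuine gap in your third step. You write that ``applying LGV cancellation leaves precisely the signed sum over vertex-disjoint path families $G\in\mathrm{VD}(1,j_0,c)$,'' but the set $\mathrm{VD}(1,j_0,c)$ is \emph{not} the set of pairwise vertex-disjoint families: by Definition \ref{d graphs}, only the paths of type a) are required to be pairwise disjoint, while a degenerate type-b) edge (a single diagonal edge from $\mathrm{source}(2i)$ to $\mathrm{sink}(i,d)$) is allowed to intersect a type-a) path passing through $\mathrm{source}(2i)$. The classical LGV involution, applied naively, would try to cancel such an intersecting configuration by swapping tails at the intersection point; but the swapped partner would contain the path $\mathrm{source}(2i+1)\to\mathrm{source}(2i)\to\mathrm{sink}(i,d)$, which is \emph{not} among the lattice paths counted by your binomial matrix (since the degenerate column contributes only the one-edge path, via $\binom{-1}{-1}=1$). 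This is why those non-disjoint families survive and must be included in $\mathrm{VD}(1,j_0,c)$. Your proof as written does not address this, and without it the LGV step does not actually produce the sum over $\mathrm{VD}(1,j_0,c)$. The paper's proof handles exactly this point, and you would need to add the corresponding argument to make your proof complete.
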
 
\begin{proof}

Let
\[
\bar{X} = (\vec{m}_d', \vec{n}_d, X)
\]
where 
\[
X_{i,j} = \vec{x}_{2+2i-j}.
\]

It follows from Definitions \ref{d null vec v} and \ref{d Xf} of $v_{j_0}(m(1) - n(j_0) - l, f)$, that, for a given $c$, we may factor out of $\det(L(j_0,c,\bar{X}))$ 
\[
\det( L(c,j_0,\bar{X})) = (\frac{a_0}{a_1})^{\epsilon(1,j)} w(c) \det( L'(c,j_0,\bar{X})) 
\] 
where 
\begin{align} 
L'(c,j_0,\bar{X})_{i,j} &= {m(i)-n(j) - c(i)+\lfloor \frac{2+2i-j-1}{2} \rfloor \choose \lfloor \frac{2+2i-j-1}{2} \rfloor }
\end{align}
and the above binomial coefficient is equal to 
\[
 \begin{cases} 
1 &\text{ if } j=2+2i \text{ and } c(i) = d  \label{1}\\ 
 \#\{\text{paths in $E$ from source($j$ + \textbf{1}($j \geq j_0$) to sink($i+1$)} \} &\text{ otherwise } 
\end{cases}.
\]
This first case above corresponds to the case $b)$ in Definition \ref{d graphs}. Applying the proof of the Lindstr\"om-Gessel-Viennot Lemma \cite{Gessel Viennot} and \cite{Lind}, we obtain that 
\[
\det (L'(c,j_0,\bar{X})) = \sum_{G \in \text{VD}(1,j_0, c)} (-1)^{\# \mathrm{inv}(G)};
\]
this sum includes graphs $G$ that contain intersecting paths $G_{l_1}$ and $G_{l_2}$, where $G_{l_1}$ is one edge from source($2i$) to sink($i,d)$, for some $i$, and $G_{l_2}$ begins at source($2i+1$) and passes through the point source($2i$). Such a graph is not canceled out by another in the sum, because, according to the Lindstr\"om-Gessel-Viennot involution, this graph maps to a $G'$ with a path 
\[
\mathrm{source}(2i+1) \rightarrow \mathrm{source}(2i) \rightarrow \mathrm{sink}(i,d),
\]
and such paths are not counted in $L'(c,j_0,\bar{X})$. This completes the proof. 
\end{proof}

\begin{definition}

From the definitions of the auxiliary functions we have the following formulas:
\begin{align*}
f_{0,2}(x_0,x_1) -x_0&= \sum_{i=-1}^{d-1}\sum_{j=0}^{\lfloor \frac{i}{2} \rfloor}  -\frac{a_{i+1}}{a_2}((\frac{-a_0}{a_2})(\frac{-a_1}{a_2})^{-1} x_1)^{j} x_0^{i-2j}{i-j \choose j}\\ 
f_{1,2}(x_0, x_1) -x_1&= \sum_{i=-2}^{d-2}\sum_{j=-1}^{\lfloor \frac{i}{2} \rfloor}  -\frac{a_{i+2}}{a_2}((\frac{-a_0}{a_2})(\frac{-a_1}{a_2})^{-1} x_1)^{j} x_0^{i-2j}x_1{i-j \choose j}.
\end{align*}

Define the function
\[
P(f, m)(z) = \prod_{S \subset [d], |S|=m} (z - \sum_{i \in S}z_i)
\]
\end{definition}

 \begin{definition} \label{d support}
 For a polynomial $p(x_0, \ldots, x_{m-1})$, define a monomial support of $p$ to  be a set of $m$-tuples of integers such that if an $m$-tuple $\vec{\beta}$
 \[
\harpoon{\beta} = (\beta_i)_{i=0}^{m-1} 
 \] 
 is not in a monomial support of $p$, then the coefficient of 
 \[
 \prod_{i=0}^r x_i^{\beta_i}
 \]
 in $p$ is 0.   
 
Let $d\geq 2$ be an integer. Let $p_0(x_0, x_1)$ and $p_1(x_0, x_1)$ be two polynomials such that a monomial support of $p_0$ is 
\[
\{ (i_0,i_1) \colon {0 \leq i_0 \leq d -1-2i_1, 0 \leq i_1} \}
\]
and a monomial support of $p_1$ is 
\[
\{ (i_0,i_1) \colon {0 \leq i_0 \leq d -2i_1, 0 \leq i_1} \}.
\]
We say that these polynomials have $(d,2)$-\emph{support}.
 \end{definition}

\begin{lemma} \label{l exists g}
Let $d\geq 2$ be an integer. Then there exist  polynomials $p_0,p_1$, and $g$ with coefficients in $R$ such that 
\begin{equation} \label{exists g}
p_0(x_0, x_1) (f_{0,2}(x_0, x_1)-x_0) + p_1(x_0, x_1) (f_{1,2}(x_0, x_1)- x_1)  = g(x_0)
\end{equation}
where $g(x_0)$ has monomial support $[0,{ d \choose 2}]$ and whose coefficient of $x_0^{d \choose 2}$ is 
\begin{equation} \label{lc}
(-\frac{a_d}{a_2})^{d-1}  (\frac{a_0}{a_1})^{\epsilon(1,1)}.
\end{equation}

\end{lemma}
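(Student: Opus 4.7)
The plan is to interpret $\mathrm{system}(f)$ as the coefficient matrix of the linear map sending a pair of polynomials $(p_0, p_1)$ to the coefficients of $p_0(f_{0,2}-x_0) + p_1(f_{1,2}-x_1)$, organized first by $x_1$-degree (indexed by row block $i$) and then by $x_0$-degree (indexed within each row block). The rank-independent null vector $\vec{v}(f)$ from Lemma \ref{l null vec v}, applied to $\mathrm{system}'(f)$, will then kill all $x_1^k$ contributions with $k \geq 1$, leaving only the first row block (corresponding to $x_1^0$), which encodes $g(x_0)$.

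First I would establish the dictionary between $\mathrm{system}(f)$ and polynomial multiplication. From the explicit formulas for $f_{0,2}-x_0$ and $f_{1,2}-x_1$ given just before Definition \ref{d support}, a direct match with the sequences $\vec{x}_j$ in Definition \ref{d Xf} shows that the coefficient of $x_0^a x_1^b$ in $f_{0,2}-x_0$ equals $x_{2b+1}(a)$, while that in $f_{1,2}-x_1$ equals $x_{2b}(a)$. Arranging the blocks of $\vec{v}(f)$ as the coefficients of
\[
p_0(x_0, x_1) = \sum_{k \geq 1} V_{2k-1}(x_0)\, x_1^{k-1}, \qquad p_1(x_0, x_1) = \sum_{k \geq 1} V_{2k}(x_0)\, x_1^{k-1},
\]
where $V_j(x_0) = \sum_{l=0}^{n(j)-1} v_j(l,f)\, x_0^l$, the Toeplitz structure of the blocks $B(m(i), n(j), \vec{x}_{2i-j})$ shows by a convolution calculation that the $u$-th entry of row block $i$ of $\mathrm{system}(f) \cdot \vec{v}(f)$ is precisely the coefficient of $x_0^u x_1^{i-1}$ in $p_0(f_{0,2}-x_0) + p_1(f_{1,2}-x_1)$.

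Second, Lemma \ref{l null vec v} gives $\mathrm{system}'(f) \cdot \vec{v}(f) = 0$, which under the dictionary means $p_0(f_{0,2}-x_0) + p_1(f_{1,2}-x_1)$ has no $x_1^k$ terms for $k \geq 1$ and is therefore a polynomial $g(x_0)$ in $x_0$ alone; its coefficients form the first row block of $\mathrm{system}(f) \cdot \vec{v}(f)$, so $g$ has monomial support contained in $[0, m(1)-1] = [0, {d \choose 2}]$. For the leading coefficient, the top entry of the first row block is identified via Lemma \ref{l vd paths} with the extremal composition $c = (d, d, \ldots, d)$ in the expansion of $v_1(\cdot, f)$, in which every path takes option $b)$ of Definition \ref{d graphs}; since $w((d, \ldots, d)) = (-a_d/a_2)^{d-1}$ and all other path configurations contribute strictly lower $x_0$-degrees, the coefficient of $x_0^{{d \choose 2}}$ equals $(-a_d/a_2)^{d-1}(a_0/a_1)^{\epsilon(1,1)}$ as claimed. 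The hardest step will be verifying the dictionary precisely---aligning the block indices $(i,j)$ with the bidegrees $(a,b)$ via the parity of $j$, and confirming that the extremal path configuration is the unique contributor to the top-degree term.
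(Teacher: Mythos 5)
Your overall architecture matches the paper's: interpret $\mathrm{system}(f)$ as the coefficient matrix of the linear map $(p_0,p_1)\mapsto p_0(f_{0,2}-x_0)+p_1(f_{1,2}-x_1)$ graded by $x_1$-degree, use the null vector $\vec{v}(f)$ from Lemma~\ref{l null vec v} applied to $\mathrm{system}'(f)$ to kill every $x_1^k$ with $k\geq 1$, and read $g(x_0)$ off the first row block. Your dictionary between the coefficients of $x_0^ax_1^b$ in $f_{0,2}-x_0$, $f_{1,2}-x_1$ and the sequences $\vec{x}_{2b+1}(a)$, $\vec{x}_{2b}(a)$ is correct and is essentially what the paper does.

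However, your leading-coefficient computation has a genuine error. You identify the top-degree term with the composition $c=(d,\ldots,d)$ ``in which every path takes option~b)'' and claim $w((d,\ldots,d))=(-a_d/a_2)^{d-1}$. Both parts are wrong. By Definition~\ref{d Xf}, $w(c)=\prod_{l=1}^{|c|}(-a_{d-c(l)}/a_2)$ with $|c|=d-2$, so $w((d,\ldots,d))=(-a_0/a_2)^{d-2}$, which is neither the claimed value nor the required one; moreover, for $d\geq 3$ the composition $(d,\ldots,d)$ has total $d(d-2)>{d-1\choose 2}$, so it does not even arise in $v_1(\cdot,f)$. The entry of $v_1$ that contributes to the $x_0^{d\choose 2}$-coefficient is $v_1\bigl({d-1\choose 2},f\bigr)$, for which $n(1)-1-l=0$, so the only composition is the \emph{all-zero} one $c=(0,\ldots,0)$, giving $w(c)=(-a_d/a_2)^{d-2}$; every path then takes option~a), and there is exactly one vertex-disjoint configuration, with no inversions. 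The missing $(d-1)$-th factor of $-a_d/a_2$ does not come from the weight at all---it comes from the single nonzero entry of the row $\mathrm{system}(f)_{1+{d\choose 2}}$, namely $-a_d/a_2$ sitting at column $1+{d-1\choose 2}$ of block $(1,1)$, a fact you must verify separately (e.g.\ via the Toeplitz structure and $x_0(l)=0$ for $l\geq 1$) rather than absorb silently into the weight.
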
 
\begin{proof} 
For integers $0\leq j \leq \lfloor \frac{d-1}{2} \rfloor$ and $N \geq 0$, the set 
\[
\{ (i_0,i_1) \colon 0 \leq i_0 \leq N+d-1+2-2i_1 \text{ and } 0 \leq i_1\}
\]
is a monomial support of both 
\begin{equation} \label{C0}
x_1^{j=1}(\sum_{k=0}^{N+2-2j} C_{0,j,k} x_0^k) (f_{0,2}(x_0, x_1)-x_0)
\end{equation}
and 
\begin{equation} \label{C1}
x_1^{j-1}(\sum_{k=0}^{N-1+2-2j} C_{1,j,k} x_0^k) (f_{1,2}(x_0, x_1)-x_1)
\end{equation}
where $C_{0,j,k}, C_{1,j,k} \in R$. For $1 \leq i,j \leq d-1$, we construct a block matrix such that the $(h,k)$-th entry in the $(i,j)$-th block is the coefficient of 
\[
x_1^{i-1}x_0^{h-1}
\] 
in the expansion of 
\[
x_1^{\lfloor \frac{j-1}{2} \rfloor} x_0^k (f_{(j-1) \mod 2,2}(x_0, x_1) -x_{(j-1) \mod 2} ),
\]
where the $(i,j)$-th block has dimensions $m(i) \times n(j)$. Choosing $N = {d-1 \choose 2}$, this block matrix is equal to system$(f)$. Thus removing the first row block from $\text{system}(f)$ yields the matrix $\text{system}'(f)$, and the entries of the null vector $\vec{v}(f)$ specified by Lemma \ref{l vd paths} become the coefficients $C_{0,j,k}$ and $C_{1,j,k}$ of expressions \eqref{C0} and \eqref{C1}. This proves that the polynomial $g(x_0)$ has the stated monomial support, because the only non-zero monomials occur from the rows of the first block of system$(f)$, and these monomials have no power of $x_1$. 

Now we prove that $g(x_0)$ has degree ${d \choose 2}$. The leading coefficient of $g(x_0)$ is then 
\[
\text{system}(f)_{1+{d\choose 2} } \cdot \vec{v}(f)
\]
where $\text{system}(f)_{1+{d\choose 2} }$ is the $(1+{d\choose 2} ) $-th row of $\text{system}(f)$. This row has all zero entries, except for the entry $-\frac{a_d}{a_2}$ at column $1 + {d-1 \choose 2}$ in block (1,1). Therefore the leading coefficient of $x_0^{{d \choose 2}}$ in $g(x_0)$ is 
\[
-\frac{a_d}{a_2} v_{1}({d -1\choose 2},f)
\]
By assumption 
\[
\frac{a_d}{a_2}  \neq 0.
\]
To evaluate $ v_{1}({d -1\choose 2},f)$ we apply Lemma \ref{l vd paths}. There is only one $c \in C(1,0, d-2)$, the $c$ with $c(i) =0$ for all $i$. And for this $c$, there is only one directed graph $G \in \mathrm{VD}(c,d)$. This $G$ has 
\[
\#\mathrm{inv}(G) = 0.
\]   
Therefore the coefficient of $x_0^{{d \choose 2}}$ in $g(x_0)$ is \eqref{lc}. This completes the proof.  
\end{proof}

\begin{theorem} \label{t v02}
Let $f(z) $ 
\[
f(z) = \sum_{=0}^d a_i z^i \in \mathbb{C}[z]
\]
be a degree $d \geq 2$ polynomial with zeros $z_1, \ldots, z_d$ such that $a_1\neq 0, a_2 \neq 0$. Recall the set $V(0,2)$ 
\begin{equation} \label{V0}
V(0,2) = \{z_i + z_j \colon 1 \leq i < j \leq d \}.
\end{equation}
Suppose the point $(\alpha_0, \alpha_1)$ is an attractor point for \emph{NRS(2)} applied to $f(z)$ with arbitrary starting point. Then 
\[
\alpha_0  \in V(0,2)
\]
\end{theorem}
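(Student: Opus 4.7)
The plan is to leverage Lemma \ref{l exists g}, which exhibits a univariate polynomial $g(x_0)$ of degree exactly $\binom{d}{2}$ lying in the ideal generated by $f_{0,2}(x_0,x_1)-x_0$ and $f_{1,2}(x_0,x_1)-x_1$, thereby reducing the problem to locating the roots of $g$. Since $(\alpha_0,\alpha_1)$ is a fixed point of NRS(2), both ideal generators vanish at that point, and substituting $(\alpha_0,\alpha_1)$ into identity \eqref{exists g} immediately yields $g(\alpha_0)=0$.

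Next I would show that every element of $V(0,2)$ is a root of $g(x_0)$. By Theorem \ref{t alpha} applied with $m=2$, for each pair $i<j$ the point of $V(2)$ indexed by $\{z_i,z_j\}$ is an attractor, so by the very same substitution argument its first coordinate $z_i+z_j$ satisfies $g(z_i+z_j)=0$. Thus $g$ vanishes on the entire set $V(0,2)$. Under the hypotheses $a_0,a_1,a_2,a_d\neq 0$ the leading coefficient \eqref{lc} is nonzero and $\deg g = \binom{d}{2} = |V(0,2)|$ (counted as pairs), so whenever the sums $z_i+z_j$ are pairwise distinct the roots of $g$ must be exactly the elements of $V(0,2)$, giving the desired conclusion $\alpha_0 \in V(0,2)$ in that generic case.

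To handle collisions $z_i+z_j=z_k+z_l$, I would view both $g(x_0)$ and $P(f,2)(x_0)=\prod_{i<j}(x_0-z_i-z_j)$ as polynomials in $x_0$ whose coefficients lie in $\mathbb{C}[z_1,\ldots,z_d,a_d]$ (the remaining $a_i$ being expressed in the $z_l$'s via Vieta's formulas). The proportionality
\[
g(x_0) \;=\; \bigl(-\tfrac{a_d}{a_2}\bigr)^{d-1}\bigl(\tfrac{a_0}{a_1}\bigr)^{\epsilon(1,1)}\, P(f,2)(x_0),
\]
which is valid on the Zariski-dense subset of parameter space where the sums $z_i+z_j$ are all distinct, then extends to an identity of polynomials in $x_0,z_1,\ldots,z_d,a_d$ on the whole parameter space. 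Consequently the roots of $g(x_0)$, counted with multiplicity, are always exactly the $\binom{d}{2}$ values $z_i+z_j$, and no genericity assumption on $f$ is required to conclude $\alpha_0\in V(0,2)$.

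The main obstacle is precisely this last degeneracy step: the relation $g(\alpha_0)=0$ does not by itself preclude spurious roots of $g$ when several sums coincide, so one has to justify the polynomial-identity extension with some care, tracking the coefficients of $g$ coming out of Lemma \ref{l exists g} as genuine polynomial functions of the defining data rather than as values attached to a particular numeric $f$.
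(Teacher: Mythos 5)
Your proposal follows essentially the same route as the paper: both derive $g(\alpha_0)=0$ from Lemma \ref{l exists g}, use Theorem \ref{t alpha} to show all $\binom{d}{2}$ sums $z_i+z_j$ are roots of $g$, match degrees and leading coefficients in the generic case, and then extend the resulting identity $g = (-a_d/a_2)^{d-1}(a_0/a_1)^{\epsilon(1,1)}P(f,2)$ to all parameters by the open/Zariski-dense set argument (the paper phrases this as analytic continuation after clearing denominators with $a_1^{\epsilon(1,1)}a_2^{d-1}$). The structure, key lemmas invoked, and the way the degenerate case is handled all coincide with the paper's proof.
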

\begin{proof}
As NRS(2) is the two-dimensional Newtown-Raphson-Simpson method applied to the system of polynomials $f_{0,2}(x_0, x_1) - x_0$ and $f_{1,2}(x_0, x_1)- x_1$, the set of attractor points of NRS(2) is the set of common zeros of that system.
Thus we need to prove that if
\begin{equation}\label{common zero}
 f_{0,2}(\alpha_0, \alpha_1)  - \alpha_0 = f_{1,2}(\alpha_0, \alpha_1) - \alpha_1 = 0
\end{equation}
then $\alpha_0 \in V(0,2)$.

Suppose condition $C$: all the numbers $z_i+z_j$ are distinct and all the $z_i$ are non-zero. Let $V_0$ denote the set of all such $\alpha_0$ satisfying \eqref{common zero}. With $g(x_0)$ from Lemma \ref{l exists g}, then we must have
\[
g(\alpha_0) = 0.
\]
From Theorem \ref{t alpha}, there are at least ${ d \choose 2}$ choices for $\alpha_0$. Again from Lemma \ref{l exists g}, $g(x_0)$ has degree ${ d \choose 2}$. This implies that, assuming condition $C$, that $\alpha_0 \in V(0,2)$ and that 
\begin{equation} \label{g P}
g(x_0) = (-\frac{a_d}{a_2})^{d-1}  (\frac{a_0}{a_1})^{\epsilon(1,1)} P(f,2)
\end{equation}

Now equating the coefficients of the monomials in $x_0$ and $x_1$ on both sides of equation \eqref{exists g} and multiplying through by $a_1^{\epsilon(1,1)} a_2^{d-1}$, we obtain polynomial equations in the variables $z_i$ that are true whenever condition $C$ is true. But condition $C$ describes an open set in $\mathbb{C}^d$, so by analytic continuation these equations must be true on all of $\mathbb{C}^d$. Therefore \eqref{g P} holds for all choices of $z_i$, and therefore \eqref{V0} does as well. This completes the proof. 
\end{proof}

\section{Related results} \label{related}

\begin{lemma} \label{l all paths} For an integer $0 \leq k \leq {d \choose 2}$, 
\begin{equation} \label{all paths}
\emph{system}(f)_{1+{d \choose 2} - k} \cdot v (X\mid_f) = (\frac{a_0}{a_1})^{\epsilon(1,1)} \sum_{c \in C(k,d-1)} w(c) \sum_{G \in \mathrm{VD}(c)} (-1)^{\# \mathrm{inv}(G)}.
\end{equation}
\end{lemma}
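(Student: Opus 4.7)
The plan is to expand the dot product term by term, use Lemma \ref{l vd paths} to express each component $v_j(l,f)$ as a signed sum over VD configurations in $\mathrm{VD}(1,j,c')$, and then combinatorially interpret the entry from the first row block of $\mathrm{system}(f)$ as a weighted single lattice path ending in row $1$. Attaching this extra path to the $d-2$ paths of each $G'\in\mathrm{VD}(1,j,c')$ produces a $(d-1)$-path system on all sources, and the Lindstr\"om-Gessel-Viennot lemma then identifies the signed sum with the right-hand side.

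Concretely, the expansion is
\[
\mathrm{system}(f)_{1+\binom{d}{2}-k}\cdot v(f) \;=\; \sum_{j=1}^{d-1}\sum_{l=0}^{n(j)-1} x_{2-j}\bigl(\tbinom{d}{2}-k-l\bigr)\, v_j(l,f),
\]
where the row entries come from the $(1,j)$-block of $\mathrm{system}(f)$. Substituting Lemma \ref{l vd paths} leaves a triple sum over $j$, over $c'\in C(n(j)-1-l,d-2)$, and over $G'\in\mathrm{VD}(1,j,c')$. From Definition \ref{d Xf}, the row entry equals
\[
x_{2-j}(m)\;=\;-t^{-\lfloor j/2\rfloor}\binom{m-\lfloor j/2\rfloor}{-\lfloor j/2\rfloor}\frac{a_{d-c_1}}{a_2},
\]
with $m=\binom{d}{2}-k-l$ and $c_1=d-(m+2-j)$. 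The binomial coefficient is exactly the number of lattice paths in $E$ from $\mathrm{source}(j)$ to $\mathrm{sink}(1,c_1)$ for $j=1$, records the single case b edge from $\mathrm{source}(2)$ to $\mathrm{sink}(1,d)$ when $j=2$ and $c_1=d$, and vanishes otherwise. The coefficient $a_{d-c_1}/a_2$ is then exactly the missing sink weight that promotes $w(c')$ to $w(c)$ for the composition $c\in C(k,d-1)$ with $c(1)=c_1$ and $c(i+1)=c'(i)$, while the identity $\epsilon(1,j)=\epsilon(1,1)+\lfloor j/2\rfloor$ absorbs the $t^{-\lfloor j/2\rfloor}$ into the uniform prefactor $(a_0/a_1)^{\epsilon(1,1)}$.

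For each fixed $c\in C(k,d-1)$, the collection of pairs (extra path from $\mathrm{source}(j)$ to $\mathrm{sink}(1,c(1))$ together with a configuration $G'\in\mathrm{VD}(1,j,c')$), summed over $j$, assembles into all $(d-1)$-path systems on $\mathrm{source}(1),\ldots,\mathrm{source}(d-1)$ landing on the sinks prescribed by $c$. Applying the Lindstr\"om-Gessel-Viennot sign-reversing involution to the combined systems cancels the non-vertex-disjoint ones in pairs, producing $\sum_{G\in\mathrm{VD}(c)}(-1)^{\#\mathrm{inv}(G)}$. The sign bookkeeping works because, given $G\in\mathrm{VD}(c)$ with path $G_j$ ending at $\mathrm{sink}(1,c(1))$ (so the sink-row permutation sends $j$ to $1$), we have $\#\mathrm{inv}(G)=(j-1)+\#\mathrm{inv}(G')$, which matches the factor $(-1)^{j-1}$ from $v_j(l,f)$ times $(-1)^{\#\mathrm{inv}(G')}$. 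The most delicate step is verifying that the LGV involution stays within this decomposition, i.e.\ does not accidentally swap the row-$1$ path with a path ending in rows $\geq 2$, and that the source-jumping exceptional graphs noted in the proof of Lemma \ref{l vd paths} thread through to the case b edges appearing in $\mathrm{VD}(c)$ rather than being double-counted.
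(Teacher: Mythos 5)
Your expansion of the dot product, the use of Lemma \ref{l vd paths}, and the identification of which first-row-block entries are nonzero (only $j=1$, with value $-a_{d-c_1}/a_2$, and $j=2$, with a single case-b contribution when $c_1=d$) all match the paper's proof. The bookkeeping of the $t^{-\lfloor j/2\rfloor}$ power against $\epsilon(1,j)$ and of $(-1)^{j-1}$ against the extra inversion is also correct and is exactly what the paper does.

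The gap is in your final step. You claim that concatenating the extra path with a configuration $G'\in\mathrm{VD}(1,j,c')$ ``assembles into \emph{all} $(d-1)$-path systems on $\mathrm{source}(1),\dots,\mathrm{source}(d-1)$'' and then invoke the Lindstr\"om-Gessel-Viennot sign-reversing involution to cancel the non-vertex-disjoint ones. That premise is false, and the LGV cancellation is neither needed nor available: the configurations $G'$ coming out of Lemma \ref{l vd paths} are \emph{already} vertex-disjoint (that is the content of $\mathrm{VD}(1,j,c')$), so you are not starting from the full set of $(d-1)$-path systems. Applying LGV to a proper subset would not in general produce the right answer, since a crossing system might have its cancelling partner outside the subset. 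What actually happens — and what the paper's proof asserts — is that concatenation is a \emph{direct bijection} onto $\bigcup_{c}\mathrm{VD}(c)$: for $j=1$ the adjoined path from $\mathrm{source}(1)=(1,0)$ to $\mathrm{sink}(1,c(1))$ is the unique horizontal path and stays in the row $i=1$, while every path of $G'\in\mathrm{VD}(1,1,c')$ starts at a source with first coordinate $\geq 2$ and, since the first coordinate is non-decreasing along $E$-edges, never enters row $1$; for $j=2$ the adjoined case-b edge is by definition exempt from the vertex-disjointness requirement, so again nothing can cross. Conversely, every $G\in\mathrm{VD}(c)$ has $G_1$ ending at $\mathrm{sink}(1,c(1))$ when $c(1)\leq d-1$ (it is the only path whose source lies in row $1$) or, when $c(1)=d$, $G_2$ is the case-b edge to $\mathrm{sink}(1,d)$ (since $\mathrm{sink}(1,d)=(1,-1)$ is unreachable from $\mathrm{source}(1)=(1,0)$ by $E$-edges). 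This is the onto-ness your argument still owes; the ``most delicate step'' you flag at the end is precisely this point, and flagging it without resolving it leaves a genuine hole. Replace the LGV step with this direct bijection and the proof agrees with the paper's.
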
 
\begin{proof} 
The only non-zero blocks in the first row of blocks in $\text{system}(d)$ are the first and second.

Consider the first block. From Lemma \ref{l vd paths}, $v_1( {d-1 \choose 2}-l, \bar{X})$ consists of a sum over $c \in C(1,l,d)$ and a sum over directed graphs $G \in \text{VD}(1,1, c)$. Multiplying this $v_1( {d-1 \choose 2}-l, \bar{X})$ with the entry 
\[
\text{system}(f)_{1+{d \choose 2} - k, 1+{d-1 \choose 2}-l} = -\frac{a_{d-k+l}}{a_2}
\]
corresponds to extending $c$ to an element  $c' \in C(k,d)$ with $c'(1) = k-l$ and adjoining to $G$ the path from $\mathrm{source}(1)$ to $\mathrm{sink}(1,k-l)$, creating $G' \in \text{VD}(c')$. 

Consider the second block. From Lemma \ref{l vd paths}, $v_2( {d-1 \choose 2}-1-l) \mid_f$ consists of a sum over $c \in C(2,l,d)$ and a sum over directed graphs $G  \in \text{VD}(1,2, c)$.  Multiplying this $v_2( {d-1 \choose 2}-1-l, \bar{X})$ with the entry 
\[
\text{system}(f)_{1+{d \choose 2} - k, 1+2{d-1 \choose 2} -l} =  \mathbf{1}(k-l=d)(\frac{a_0}{a_1})^{-1}(- \frac{a_{0}}{a_2})
\]
corresponds to extending $c$ to an element in $c' \in C(k,d)$ with $c'(1) = d$ and adjoining to $G$ the path from source(2) to sink(1,$d$), creating $G' \in \text{VD}(c')$. By definition, terms in $v_2(X \mid_f)$ have one more factor of $-1$ than do terms in  $v_2(X \mid_f)$; this -1 corresponds to the one more inversion in $G'$ than in $G$ created by adjoing the path. And since
\[
\epsilon(1,2) -1 = \epsilon(1,1)
\]
the power of $\frac{a_0}{a_1}$ agrees with that in \eqref{all paths}. 

The set $ \bigcup_{c \in C(k,d-1)} \text{VD}(c)$ is equal to the set of the $G'$ created above. This completes the proof. 
\end{proof}

\begin{definition} 

Let $\mathrm{DSG}(d)$ denote the set of directed graph $\mathcal{G}$ on $d$ vertices 

\[
V=\{1, \ldots, d\}
\]
such that there are no parallel edges and no 1- or 2-cycles. we call such graphs \emph{directed simple graphs}. 

Denote a directed edge $(i,j)$ as an edge going from $i$ to $j$ and denote the edge set of $\mathcal{G}$ as $E(\mathcal{G})$. Define
\[
w(\mathcal{G}) = \prod_{(i,j) \in E(\mathcal{G})} z_i.
\]

\end{definition}

\begin{theorem} \label{t simple graph gen}
\begin{equation} \label{simple graph gen}
\sum_{\mathcal{G} \in \mathrm{DSG}(d), |E(\mathcal{G})| = l} w(\mathcal{G})= \sum_{c \in C(l, d-1)}e_c(\vec{z})\sum_{G \in \mathrm{VD}(c)} (-1)^{\#\mathrm{inv}(G)}
\end{equation}
\end{theorem}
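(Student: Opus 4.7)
The plan is to read the coefficient of $x_0^{\binom{d}{2}-l}$ in the polynomial $g(x_0)$ of Lemma \ref{l exists g} in two different ways, identifying the two resulting expressions with the left and right hand sides of \eqref{simple graph gen}. For the left hand side, I observe that specifying a directed simple graph on $[d]$ amounts to choosing, independently for each unordered pair $\{i,j\}$ with $i<j$, one of three options: omit the edge (weight $1$), include $i\to j$ (weight $z_i$), or include $j\to i$ (weight $z_j$); the parallel-edge and 2-cycle prohibitions rule out all other possibilities. Tracking the edge count with a formal variable $t$ gives
\[
\sum_{k\geq 0} t^k \sum_{\substack{\mathcal{G}\in\mathrm{DSG}(d)\\ |E(\mathcal{G})|=k}} w(\mathcal{G}) \;=\; \prod_{1\leq i<j\leq d}\bigl(1 + t(z_i + z_j)\bigr),
\]
so the coefficient of $t^l$ on the right is $e_l(\{z_i+z_j\}_{i<j})$, and the LHS of \eqref{simple graph gen} equals this symmetric function.

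For the right hand side I extract $[x_0^{\binom{d}{2}-l}]\,g(x_0)$ two ways. By equation \eqref{g P} in the proof of Theorem \ref{t v02},
\[
g(x_0) = \Bigl(-\tfrac{a_d}{a_2}\Bigr)^{d-1}\Bigl(\tfrac{a_0}{a_1}\Bigr)^{\epsilon(1,1)}\prod_{1\leq i<j\leq d}(x_0-z_i-z_j),
\]
so
\[
[x_0^{\binom{d}{2}-l}]\,g(x_0) = \Bigl(-\tfrac{a_d}{a_2}\Bigr)^{d-1}\Bigl(\tfrac{a_0}{a_1}\Bigr)^{\epsilon(1,1)}(-1)^l \, e_l\bigl(\{z_i+z_j\}_{i<j}\bigr).
\]
On the other hand, Lemma \ref{l all paths} gives the same coefficient as
\[
\Bigl(\tfrac{a_0}{a_1}\Bigr)^{\epsilon(1,1)}\sum_{c\in C(l,d-1)} w(c)\sum_{G\in\mathrm{VD}(c)}(-1)^{\#\mathrm{inv}(G)}.
\]
Equating and cancelling the common factor $(a_0/a_1)^{\epsilon(1,1)}$ yields the intermediate identity
\[
\Bigl(-\tfrac{a_d}{a_2}\Bigr)^{d-1}(-1)^l \, e_l\bigl(\{z_i+z_j\}_{i<j}\bigr) = \sum_{c\in C(l,d-1)} w(c)\sum_{G\in\mathrm{VD}(c)}(-1)^{\#\mathrm{inv}(G)}.
\]

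The last step is to rewrite $w(c)$ in terms of $e_c(\vec{z})$. Writing $f(z) = a_d\prod_i(z-z_i)$ and matching coefficients gives $e_k(\vec{z}) = (-1)^k a_{d-k}/a_d$, so for $c\in C(l,d-1)$ (with $|c|=d-1$),
\[
e_c(\vec{z}) = \prod_{i=1}^{d-1} e_{c(i)}(\vec{z}) = (-1)^l \prod_{i=1}^{d-1} \frac{a_{d-c(i)}}{a_d};
\]
compared with $w(c) = \prod_{i=1}^{d-1}(-a_{d-c(i)}/a_2)$ this gives $w(c) = (-1)^l(-a_d/a_2)^{d-1}\,e_c(\vec{z})$. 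Substituting into the intermediate identity and cancelling the common factor $(-1)^l(-a_d/a_2)^{d-1}$ recovers
\[
e_l\bigl(\{z_i+z_j\}_{i<j}\bigr) = \sum_{c\in C(l,d-1)} e_c(\vec{z})\sum_{G\in\mathrm{VD}(c)}(-1)^{\#\mathrm{inv}(G)},
\]
which combined with the first paragraph completes the proof. The main obstacle I anticipate is the sign and normalization bookkeeping when passing between $w(c)$ and $e_c(\vec{z})$, together with the observation that the cancellation of $(-a_d/a_2)^{d-1}$ is legitimate as an identity of rational functions in the indeterminates $z_1,\ldots,z_d$, both sides being polynomial multiples of this common factor.
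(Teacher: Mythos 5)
Your proposal is correct and follows essentially the same route as the paper's proof: both read off the coefficient of $x_0^{\binom{d}{2}-l}$ in $g(x_0)$ once via Lemma \ref{l all paths} (paths) and once via the factorization $g \propto P(f,2)$ from Theorem \ref{t v02}, then identify the elementary-symmetric expansion of $\prod_{i<j}(z-z_i-z_j)$ with the DSG sum by the same ``edge $(i,j)$ picks $z_i$ from the factor $z-(z_i+z_j)$'' observation. The paper states the normalization step (``removing the powers of $a_0/a_1$ and the denominators of $a_2$'') only informally, and your explicit computation $w(c)=(-1)^l(-a_d/a_2)^{d-1}e_c(\vec z)$ is exactly the bookkeeping the paper leaves to the reader.
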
 
\begin{proof} 
In the right side of equation \eqref{all paths}, removing the powers of $\frac{a_0}{a_1}$ and the denominators of $a_2$ yields the right side \eqref{simple graph gen}. Lemmas \ref{l exists g}, \ref{l all paths} and Theorem \ref{t v02} show that this expression is the coefficient of $(-1)^lz^{{d \choose 2}-l}$ in $P(f,2)(z)$. And this coefficient is equal to the left side of \eqref{simple graph gen}, for the edge $(i,j)$ of $\mathcal{G}$ corresponds to choosing $z_i$ from the factor 
\[
z - (z_i  +z_j).
\]
This completes the proof.
\end{proof}

\subsection{A formula for the Newton series of an arbitrary product of binomial coefficients} \label{ss bin prod}
This section proves Theorem \ref{t su}, which arises from counting graphs in the sum on the left side of \eqref{simple graph gen}.
 \begin{lemma} \label{l bin sum} 
 For non-negative integers $l, a$ and $b$,
 \begin{equation} \label{bin sum}
\sum_{i=1}^l {a \choose i} {b \choose l-i} = {a+b \choose l}.
 \end{equation} 
 \end{lemma}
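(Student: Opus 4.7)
The plan is to recognize this as the classical Chu--Vandermonde convolution identity (with the caveat that the sum as written starts at $i=1$; the $i=0$ term contributes ${a \choose 0}{b \choose l} = {b \choose l}$, which should be included, so I will treat the intended range as $0 \le i \le l$ and note this at the end). I would prove it by a generating-function argument, which is the cleanest route and avoids a combinatorial case analysis on negative or fractional parameters.

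First, I would write down the binomial expansion
\[
(1+x)^a = \sum_{i \ge 0} \binom{a}{i} x^i, \qquad (1+x)^b = \sum_{j \ge 0} \binom{b}{j} x^j,
\]
valid for any non-negative integers $a,b$ (with $\binom{a}{i}=0$ for $i>a$). Second, I would multiply these two identities and use the standard Cauchy product to get
\[
(1+x)^a (1+x)^b = \sum_{l \ge 0} \left( \sum_{i=0}^{l} \binom{a}{i}\binom{b}{l-i} \right) x^l.
\]
Third, I would invoke the elementary rule $(1+x)^a(1+x)^b = (1+x)^{a+b}$ and apply the binomial theorem to the right-hand side to obtain
\[
(1+x)^{a+b} = \sum_{l \ge 0} \binom{a+b}{l} x^l.
\]
Finally, comparing coefficients of $x^l$ in the two expressions yields \eqref{bin sum}.

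The proof is essentially immediate and presents no obstacle; the only subtlety is the indexing of the sum. Since the statement has $\sum_{i=1}^l$ rather than $\sum_{i=0}^l$, a brief remark is warranted: the $i=0$ term equals $\binom{b}{l}$, and the identity as commonly used (and as it will be used in subsequent applications in this section) is the full convolution $\sum_{i=0}^l \binom{a}{i}\binom{b}{l-i} = \binom{a+b}{l}$. I would state the proof in that form and regard the $i=1$ lower bound in the excerpt as a typographical convention.
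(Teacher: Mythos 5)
Your proof is correct, and you also correctly flag that the lower bound $i=1$ is a typo for $i=0$ (the $i=0$ term $\binom{b}{l}$ is needed for the identity to hold; the paper's own proof implicitly sums over all $i$). Your route is different from the paper's: you use the generating-function argument, multiplying $(1+x)^a(1+x)^b=(1+x)^{a+b}$ and comparing coefficients of $x^l$, whereas the paper gives the classical combinatorial double-count: $\binom{a+b}{l}$ counts size-$l$ subsets of $[a+b]$, and grouping such subsets $U$ by the number $i$ of elements of $U$ lying in $[a]$ gives $\binom{a}{i}\binom{b}{l-i}$ for each $i$. The combinatorial argument is shorter and requires nothing beyond the definition of the binomial coefficient; your generating-function argument is essentially as short, mechanizes the convolution structure (which fits well with the Cauchy-product machinery recurring throughout this section), and generalizes more readily to non-integer upper parameters if that were ever needed. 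Either proof is fine here.
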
 
 \begin{proof}
There are ${a+b\choose l}$  subsets of size $l$ of the set $[a+b]$. There are ${a \choose i} {b \choose l-i}$ subsets $U$ of size $l$ such that $i$ elements of $U$ are also in $[a]$. Summing over all $i$ yields equation \eqref{bin sum}. This proves the lemma.  
 \end{proof}
 
 \begin{lemma} \label{l rec}
  For integers $m, n \geq 0$, let $x_i$ and $y_j$ be integers for $1\leq i \leq m$ and $1\leq j \leq n$ such that 
  \[
  \sum_{i=1}^m x_i = \sum_{j=1}^n y_j = A
  \]
  for some integer $A$.
   Then 
  \begin{equation} \label{rec}
  \sum_{ \substack{y_j=\sum_{i=1}^m a_{i,j} \\ x_i =\sum_{j=1}^n a_{i,j} } } \frac{1}{\prod_{i=1}^m \prod_{j=1}^n a_{i,j}!} = \frac{A!}{\prod_{i=1}^m  x_i! \prod_{j=1}^n  y_j!}.
  \end{equation}
 \end{lemma}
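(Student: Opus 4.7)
The plan is to induct on $n$, using Lemma \ref{l bin sum} (in iterated Vandermonde--Chu form) as the key arithmetic step. A more symmetric combinatorial proof exists---after multiplying through by $A!$, both sides count pairs consisting of an ordered partition of an $A$-element set into blocks of sizes $x_1,\ldots,x_m$ together with an ordered partition into blocks of sizes $y_1,\ldots,y_n$---but the immediately preceding Lemma \ref{l bin sum} signals that the inductive route is what is wanted.

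The base case $n=1$ is immediate: the constraints force $a_{i,1}=x_i$ with $y_1 = A$, so both sides equal $1/\prod_i x_i!$. For the inductive step, isolate the last column. For each fixed vector $(a_{1,n},\dots,a_{m,n})$ of non-negative integers summing to $y_n$, the remaining $m\times(n-1)$ submatrix has row sums $x_i - a_{i,n}$ and column sums $y_1,\dots,y_{n-1}$, with common total $A - y_n = \sum_{j<n} y_j$. Applying the inductive hypothesis to this reduced system gives
\[
\sum_{(a_{i,j})_{j<n}}\frac{1}{\prod_{i,\,j<n}a_{i,j}!} = \frac{(A-y_n)!}{\prod_i (x_i - a_{i,n})!\,\prod_{j<n}y_j!}.
\]
Multiplying by the outer factor $1/\prod_i a_{i,n}!$ and using $1/\bigl(a_{i,n}!(x_i - a_{i,n})!\bigr)=\binom{x_i}{a_{i,n}}/x_i!$, the full left side of \eqref{rec} rearranges to
\[
\frac{(A-y_n)!}{\prod_i x_i!\,\prod_{j<n}y_j!}\;\sum_{\sum_i a_{i,n}=y_n}\prod_{i=1}^m \binom{x_i}{a_{i,n}}.
\]

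It remains to show that the inner sum equals $\binom{A}{y_n}$. This is a short induction on $m$ from Lemma \ref{l bin sum}: for $m=2$ it is exactly that lemma, and the inductive step merges the last two factors via $\sum_{a+b=c}\binom{x_{m-1}}{a}\binom{x_m}{b}=\binom{x_{m-1}+x_m}{c}$ before applying the $m-1$ case. Substituting $\binom{A}{y_n}=A!/\bigl(y_n!(A-y_n)!\bigr)$ collapses the $(A-y_n)!$ and yields precisely $A!/\bigl(\prod_i x_i!\prod_j y_j!\bigr)$, closing the outer induction. The main obstacle is purely bookkeeping: one must check that the reduced system continues to satisfy the hypothesis $\sum_i x_i' = \sum_{j<n}y_j$ (which is automatic from $\sum_i a_{i,n}=y_n$), and that the summation ranges are genuinely over non-negative integers---so any $a_{i,n} > x_i$ contributes zero via the vanishing of $\binom{x_i}{a_{i,n}}$, matching the factorial convention implicit in the original sum.
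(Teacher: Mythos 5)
Your proof is correct, and it takes a route closely related to but organized differently from the paper's. The paper's primary induction is on $m$: it first establishes the lemma in the special case $m=2$ for all $n$ by a sub-induction on $n$ (applying Lemma \ref{l bin sum} once per step), and then closes the general $m$ case by peeling off the last row, applying the $(m-1)$ hypothesis, and invoking the $n=2$ instance (which follows from the $m=2$ case by symmetry). Your primary induction is instead on $n$: you peel off the last column, apply the $(n-1)$ hypothesis to the reduced system, and factor the residual arithmetic into the multi-factor Vandermonde identity $\sum_{a_1+\cdots+a_m=c}\prod_i\binom{x_i}{a_i}=\binom{\sum_i x_i}{c}$, which you prove separately by a one-dimensional induction from Lemma \ref{l bin sum}. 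After clearing factorials, your multi-factor Vandermonde is precisely the $n=2$ instance of the lemma, so the two proofs are essentially dual decompositions of the same double induction; yours is arguably tidier in that it isolates a self-contained, cleanly stated sub-lemma rather than carrying the full lemma around in its $m=2$ form. Your aside about the double-counting interpretation (ordered partitions into blocks of prescribed sizes) is also correct, though neither argument uses it, and the bookkeeping points you flag about the reduced system and the vanishing conventions are handled correctly.
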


\begin{proof} 
We use induction on $m$.
Suppose $m=1$. Then $x_1=\sum_{j=1}^n y_j$ and equation \eqref{rec} becomes 
\[
\frac{1}{\prod_{j=1}^n y_j!} = \frac{x_1!}{x_1! \prod_{j=1}^n y_j!}.
\] 
Likewise for $n=1$.

We claim \eqref{rec} is true for $m=2$ and all $n$. We use induction on $n$. First suppose $m=n=2$. This case follows directly from Lemma \ref{bin sum}. 

Next assume \eqref{rec} is true for $m=2$ and some $n \geq 2$. Then 
\begin{align*}
  \sum_{ \substack{y_j=\sum_{i=1}^2 a_{i,j} \\ x_i =\sum_{j=1}^{n+1} a_{i,j} } } \frac{1}{\prod_{i=1}^2 \prod_{j=1}^{n+1} (a_{i,j}!)} &=   
  \sum_{y_{n+1}=a_{1,n+1}+ a_{2,n+1} }\frac{1}{a_{1,n+1}! a_{2,n+1}!}\sum_{ \substack{y_j=\sum_{i=1}^2 a_{i,j} \\ x_i -a_{i,n+1}=\sum_{j=1}^{n} a_{i,j} } } \frac{1}{\prod_{i=1}^2 \prod_{j=1}^{n+1} (a_{i,j}!)}. \\ 
     &\text{ Applying the induction hypothesis yields } \\ 
  &= \sum_{y_{n+1}=a_{1,n+1}+ a_{2,n+1} }\frac{1}{a_{1,n+1}! a_{2,n+1}!} \frac{(x_1+x_2 -y_{n+1})!}{(x_1 -a_{1,n+1})! (x_2 -a_{2,n+1})!\prod _{j=1}^n y_j!}\\
  &=   \frac{(x_1+x_2 -y_{n+1})!}{x_1! x_2!\prod _{j=1}^n y_j!}  \sum_{a_{1,n+1}=0 }^{y_{n+1}} {x_1 \choose a_{1,n+1}} {x_2  \choose y_{n+1} -a_{1,n+1} } \\
   &\text{ Applying Lemma \ref{l bin sum} with $l = y_{n+1}, a=x_1, b=x_2$ yields } \\ 
  &= \frac{(x_1+x_2)!}{x_1 ! x_2 ! y_{n+1}!\prod _{j=1}^n y_j!}
\end{align*}
This complete the claim for $m=2$. Likewise for $n=2$.

Now for some $m_0 \geq 2$, suppose \eqref{rec} is true for $m = m_0$ and all $n$. Then 
\[
  \sum_{ \substack{y_j=\sum_{i=1}^{m_0+1} a_{i,j} \\ x_i =\sum_{j=1}^{n} a_{i,j} } } \frac{1}{\prod_{i=1}^{m_0+1} \prod_{j=1}^{n} (a_{i,j}!)} =   
  \sum_{x_{m_0+1}=\sum_{j=1}^n a_{m_0+1,j}}\frac{1}{\prod_{j=1}^n a_{m_0+1,j}!}\sum_{ \substack{y_j-a_{m_0+1,j}=\sum_{i=1}^{m_0} a_{i,j} \\ x_i =\sum_{j=1}^{n} a_{i,j} } } \frac{1}{\prod_{i=1}^{m_0} \prod_{j=1}^{n} (a_{i,j}!)} 
  \] 
   \text{ Applying the induction hypothesis yields }
\begin{align*}
 & = \sum_{x_{m_0+1}=\sum_{j=1}^n a_{m_0+1,j}} \frac{1}{\prod_{j=1}^n a_{m_0+1,j}! )}\frac{((\sum_{j=1}^n y_j)-x_{m_0+1})!}{\prod_{i=1}^m x_i! \prod_{j=1}^n (y_j - a_{m_0+1,j})! }\\ 
  &= \frac{((\sum_{j=1}^n y_j)-x_{m_0+1})!}{\prod_{i=1}^m x_i! } \sum_{x_{m_0+1}=\sum_{j=1}^n a_{m_0+1,j}} \frac{1}{ \prod_{j=1}^n  a_{m_0+1,j}!  (y_j - a_{m_0+1,j})!} \\ 
  \end{align*}
   \text{ Applying the result for $m$ or $n=2$ yields }\
   \begin{align*}
  &=\frac{((\sum_{j=1}^n y_j)-x_{m_0+1})!}{\prod_{i=1}^m x_i! } \frac{(\sum_{j=1}^n y_j)!}{x_{m_0+1}!(\sum_{j=1}^n y_j)-x_{m_0+1})! \prod_{j=1}^n y_j! } \\ 
  &= \frac{(\sum_{j=1}^n y_j)!}{\prod_{i=1}^{m_0+1} x_i!  \prod_{j=1}^n y_j! }.
  \end{align*} 
  This completes the proof. 
\end{proof} 

\begin{definition} 
For an integer $r$, let $\nu$ 
\[
 \nu= (\nu(1), \ldots, \nu(r))
\]
denote an $r$ tuple of integers. Let $N(r)$ denote the set of such $\nu$.  Let $s$
\[
s = (s(U))_{U \subseteq [r]}
\]
 denote a $(2^r -1)$-tuple of integers $s(U)$ indexed by the non-empty subsets $U$ of $[r]$. Given $\nu \in N(r)$ and an integer $k$, let $S(\nu,k)$ denote the set of all $s$ such that for each $i$, $1 \leq i \leq r$,
 \[
\nu(i) = \sum_{i \in U \subseteq [r]} s(U)
 \] 
and 
\[
k = \sideset{}{'}\sum_{U \subseteq [r]} s(U).
\] 

For integers $0 \leq j <r$, let $t = t(V)_{\emptyset \neq V \subseteq [r-j]}$be a $(2^{r-j}-1)$-tuple of integers indexed by the nonempty subsets of $[r-j]$. Suppose $0 \leq i \leq r-j$. Then define $\nu(t,i)$ 
\[
\nu(t,i) = \nu(i) - \sum_{i \in V} t(V)
\]  
\end{definition} 

\begin{theorem}\label{t su} For integers $d\geq0$ and $r \geq 1$ and with $\nu \in N(r)$,
\begin{equation} \label{su}
\sum_{k=0}^\infty (d)_k \sum_{s \in S(\nu,k)}  \frac{1}{\sideset{}{'}\prod_{U\subseteq [r]}s(U)!} = \prod_{i=1}^r {d \choose \nu(i)}.
\end{equation}
\end{theorem}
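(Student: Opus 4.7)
The plan is to prove Theorem \ref{t su} by a generating function identity based on the binomial theorem. I introduce formal variables $y_1, \ldots, y_r$ and consider the substitution
\[
z = \prod_{i=1}^r (1+y_i) - 1 = \sideset{}{'}\sum_{U \subseteq [r]} \prod_{i \in U} y_i.
\]
By the multinomial theorem,
\[
z^k = \sideset{}{'}\sum_{\sum_U s(U) = k} \frac{k!}{\sideset{}{'}\prod_U s(U)!} \prod_{i=1}^r y_i^{\sum_{U \ni i} s(U)}.
\]
Extracting the coefficient of $\prod_{i=1}^r y_i^{\nu(i)}$ yields
\[
[y_1^{\nu(1)} \cdots y_r^{\nu(r)}] z^k = \sum_{s \in S(\nu,k)} \frac{k!}{\sideset{}{'}\prod_U s(U)!}.
\]

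Next I apply the identity $(1+z)^d = \sum_{k \geq 0} \frac{(d)_k}{k!} z^k$. Since $1+z = \prod_{i=1}^r (1+y_i)$ by construction, we have
\[
(1+z)^d = \prod_{i=1}^r (1+y_i)^d,
\]
and therefore
\[
[y_1^{\nu(1)} \cdots y_r^{\nu(r)}] (1+z)^d = \prod_{i=1}^r [y_i^{\nu(i)}] (1+y_i)^d = \prod_{i=1}^r { d \choose \nu(i)}.
\]
Equating the coefficient of $\prod_i y_i^{\nu(i)}$ on both sides of $(1+z)^d = \sum_k \frac{(d)_k}{k!} z^k$ and substituting the formula from the first paragraph gives
\[
\prod_{i=1}^r { d \choose \nu(i)} = \sum_{k=0}^\infty \frac{(d)_k}{k!} \sum_{s \in S(\nu,k)} \frac{k!}{\sideset{}{'}\prod_U s(U)!} = \sum_{k=0}^\infty (d)_k \sum_{s \in S(\nu,k)} \frac{1}{\sideset{}{'}\prod_U s(U)!},
\]
which is exactly the claim. (Note the outer sum is effectively finite, since $S(\nu,k)$ is empty for $k > \sum_i \nu(i)$.)

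I do not foresee a substantial obstacle; the argument reduces the identity to two routine expansions. The only bookkeeping item to verify carefully is the translation between the multinomial indexing $\{s(U)\}_{U \neq \emptyset}$ and the constraints defining $S(\nu,k)$: the constraint $\sum_{U \ni i} s(U) = \nu(i)$ corresponds precisely to extracting $y_i^{\nu(i)}$, and the constraint $\sum_U s(U) = k$ corresponds to working with the single power $z^k$. If one preferred a purely combinatorial proof, the right side counts ordered $r$-tuples $(A_1, \ldots, A_r)$ of subsets of $[d]$ with $|A_i| = \nu(i)$; classifying each such tuple by the block sizes $s(U) = |\{x \in [d] : \{i : x \in A_i\} = U\}|$ for non-empty $U$ produces a multinomial coefficient $\frac{d!}{(d-k)! \, \sideset{}{'}\prod_U s(U)!} = \frac{(d)_k}{\sideset{}{'}\prod_U s(U)!}$, and summing over $s \in S(\nu,k)$ and $k$ recovers the left side. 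Either presentation would fit, but I favor the generating function version for brevity.
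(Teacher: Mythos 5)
Your generating-function argument is correct and takes a genuinely different route from the paper's. The paper proves identity \eqref{su} by induction on $r$: it peels off $\nu(r+1)$, regroups the sum as a sum over compositions $t$ of $\nu(r+1)$ indexed by subsets of $[r]$, and then applies a multi-index refinement of the Vandermonde convolution (Lemma~\ref{l rec}, itself proved by a double induction from Lemma~\ref{l bin sum}) through a chain of $r$ change-of-variable steps that collapse the auxiliary variables one at a time. Your proof instead reads off the whole identity from a single substitution: with $z=\prod_{i=1}^r(1+y_i)-1$, you compare the coefficient of $\prod_i y_i^{\nu(i)}$ in $(1+z)^d=\sum_k (d)_k z^k/k!$ with the coefficient in the product form $\prod_i(1+y_i)^d$, and the multinomial expansion of $z^k$ produces exactly the inner sum over $S(\nu,k)$. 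Both prove the same fact, but yours dispenses with Lemmas~\ref{l bin sum} and~\ref{l rec} entirely and is considerably shorter and more conceptual; the double-counting restatement you mention at the end (classifying ordered $r$-tuples $(A_1,\dots,A_r)$ of subsets of $[d]$ by the block sizes $s(U)=|\{x:\{i:x\in A_i\}=U\}|$) is the bijective shadow of the same identity and would serve equally well. One bookkeeping point worth making explicit: in the multinomial expansion only nonnegative $s(U)$ appear and the outer sum over $k$ terminates at $k=\sum_i \nu(i)$, which squares with the paper's (implicit) convention that terms with a negative $s(U)$ vanish.
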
 
\begin{proof} 
We use induction on $r$. For $r=1$, the only non-zero term from the left of \eqref{su} occurs when $s(\{1\}) = k = \nu(1)$, yielding 
\[
\frac{(d)_{\nu(1)}}{\nu(1)!} = {d \choose \nu(1)}.
\] 
Assume \eqref{su} is true for some $r \geq 1$. Let $C(a,b)$ denote the set of compositions of the integer $a$ into $b$ possibly nonzero parts. Then 
\begin{align} \label{su ind}
 \sum_{k=0}^\infty (d)_k \sum_{s \in S(\nu,k)}  \frac{1}{\prod_{U\in [r+1]}s(U)!} =  \sum_{t \in C(\nu(r+1), 2^r)}\frac{1}{\prod_{V \subseteq [r]} t(V)!}\sum_{k=0}^\infty (d)_k \sum_{s \in S(\nu(t),k-\nu(r+1))}  \frac{1}{\prod_{U\subseteq [r]}s(U)!} 
\end{align}
the outer sum on the right is a sum over compositions $t$, where $t$ is an $2^r$-tuple of integers 
\[
t = (t(V))_{V \subseteq [r]}
\]
indexed by all of the subsets $V \subseteq [r]$ such that 
\[
\sum_{V \subseteq [r]} t(V) = \nu(r+1),
\]
where the variable $t(V)$ on the right side corresponds to $s(\{r+1\} \cup V)$ on the left side. 
Expressing 
\[
(d)_k = (d)_{\nu(r+1)} (d-\nu(r+1))_{k-\nu(r+1)}, 
\]
the right side of \eqref{su ind} becomes 
\[
\sum_{t \in C(\nu(r+1); 2^r)}\frac{(d)_{\nu(r+1)}}{\prod_{V \subseteq [r]} t(V)!}\sum_{k=0}^\infty (d-\nu(r+1))_{k-\nu(r+1)} \sum_{s \in S(\nu(t),k-\nu(r+1))}  \frac{1}{\prod_{U\subseteq [r]}s(U)!} 
\]
which, after applying the induction hypothesis, yields 
\[
(d)_{\nu(r+1)}\sum_{t \in C(\nu(r+1); 2^r)}\frac{1}{\prod_{V \subseteq [r]} t(V)!} \prod_{i=1}^r {d - \nu(r+1) \choose \nu(t,i)}.
\]
In the above sum we express 
\[
t(\emptyset) = \nu(r+1)-\sideset{}{'}\sum_{V \subseteq [r]} t(V)
\]
to obtain
\begin{align} \label{su claim}
&\sum_{t \in T(2^r-1)} \frac{1}{(\nu(r+1)-\sideset{}{'}\sum_{V \subseteq [r]} t(V))! \sideset{}{'}\prod_{V \subseteq [r]} t(V)!} \prod_{i=1}^r {d - \nu(r+1) \choose \nu(t,i)}
\end{align}
where $T(2^r-1)$ denotes the set of $(2^r-1)$-tuples of integers with each tuple indexed by the set of nonempty subsets of $[r]$. We claim that \eqref{su claim} is equal to 
\[
 \frac{1}{\nu(r+1)!}\prod_{i=1}^r {d \choose \nu(i)}.
\]

To \eqref{su claim} we apply $r$ steps, where each step consists of a change of variables and an application of Lemma \ref{rec}. We denote the $j$-the set of variables by $t_j(V)$ where $\emptyset \neq V\subseteq [r-j]$ and let 
\[
t_j = (t_j(V))_{\emptyset \neq V \subseteq [r-j]}
\]
 We claim that after the $j$-th step we obtain that \eqref{su claim} is equal to expression $E(j)$ defined by 
\begin{equation} \label{su claim j}
E(j) =\prod_{i=r-j+1}^{r} {d \choose \nu(i)}\sum_{t_j \in T(2^{r-j}-1)} \frac{1}{(\nu(r+1)-\sideset{}{'}\sum_{V \subseteq [r-j]} t_j(V))! \sideset{}{'}\prod_{V \subseteq [r-j]} t_j(V)!} \prod_{i=1}^{r-j} {d - \nu(r+1) \choose \nu(t_j,i)}.
\end{equation}
For $j=0$, expression \eqref{su claim j} is equal to \eqref{su claim}, with $t_0$ denoting $t$. We therefore describe the $(j+1)$th step. Define the change of variables 
\[
t_{j}(V ) \mapsto t_{j+1}(V)  - t_{j}(V \cup \{ r-j \}) 
\]
for each $V \subseteq [r-j-1], V \neq \emptyset$. After this change of variables, all variables are of the form either $t_{j+1}(V)$ for $V \subseteq [r-j-1], V \neq \emptyset$ or $t_j(V \cup \{ r-j\}), V \subseteq [r-j-1] $. The $(j+1)$-th step then consists of summing over the variables $t_j(V)$ by applying Lemma \ref{rec}.
 
 Now for $1\leq i \leq r-j-1$ we have
 \[
\nu(t_j,i) = \sum_{}\nu(t_{j+1},i)
 \]
 because $i \in V\cup \{r-j\}$ if and only if $i \in V$.  Therefore the dependence on $t_j(V)$ in the summand of \eqref{su claim j} is 
 \begin{align}
 &\frac{1}{(\nu(r+1) - t(\{r-j\})-\sideset{}{'}\sum_{V \subseteq [r-j-1]}t_{j+1}(V))!}\times \\
 &\frac{1}{t_j(\{r-j\})!}\times \\
 &\sideset{}{'}\prod_{V\subseteq [r-j-1]}\frac{1}{(t_j(V \cup \{r-j\})!} \times\\
 &\sideset{}{'}\prod_{V\subseteq [r-j-1]}\frac{1}{ (t_{j+1}(V) - t_j(V \cup \{r-j\})!}\times\\
 &\frac{(d-\nu(r+1))!}{(\nu(r-j) - \sum_{V \subseteq [r-j-1]}t(V\cup \{r-j\}))!(d - \nu(r+1)- \nu(r-j)+\sum_{V \subseteq [r-j-1]} t(V\cup \{ r-j\}) )!} 
 \end{align}
 
 Now we sum over all the variables of the form $t(V\cup \{r-j\})$ for $V\subseteq [r-j-1]$ by applying Lemma \ref{rec} with $m=2$ and $n=2^{r-j-1}+1$;  and 
 \begin{align*}
 a_{1,1} &= t_j(\{r-j\})\\
 a_{2,1} &= \nu(r+1) - t(\{r-j\})-\sideset{}{'}\sum_{V \subseteq [r-j-1]}t_{j+1}(V))\\
 a_{1,i} &= t_j(V_i \cup \{r-j\})\\
 a_{2,i} &= (t_{j+1}(V_i) - t_j(V_i \cup \{r-j\})!\\
 a_{1,2^{r-j-1}+1}&= (\nu(r-j) - \sum_{V \subseteq [r-j-1]}t(V\cup \{r-j\}))\\
 a_{2,2^{r-j-1}+1} &= d - \nu(r+1)- \nu(r-j)+\sum_{V \subseteq [r-j-1]} t(V\cup \{ r-j\}) )
  \end{align*}
 where have used some ordering $V_i$ of the nonempty subsets of $[r-j-1]$ with $i \geq 2$.  Applying the result of Lemma \ref{rec} to expression \eqref{su claim j} yields $E(j+1)$. $E(r)$ is equal to 
\[
\frac{1}{\nu(r+1)!}\prod_{i=1}^r {d \choose \nu(i)}
\]
which, times $(d)_{\nu(r+1)}$, is equal to the right side of \eqref{su}. This completes the proof.

\end{proof} 

\begin{definition} 
For integers $m\geq n \geq 0$, define an $(m-n)$-multiset $\lambda$ 
\[
\lambda = (\lambda(n+1), \lambda(n+2), \ldots, \lambda(m))
\]
to be a multiset consiting of $m-n$ integers (counted with multiplicity) such that $1 \leq \lambda(i-1)\leq \lambda(i) \leq i$. Let $L(m,n)$ denote the set of such $(m-n)$-multisets. Let $\mathrm{mult}_i(\lambda)$ denote the number of times $i$ appears in $\lambda$.  Define the integer $c(\lambda)$ by 
\[
c_m(\lambda) = \prod_{i=1}^m {m-i+1  - \sum_{j=i+1}^m \mathrm{mult}_j(\lambda)\choose \mathrm{mult}_i(\lambda)}
\]

Define $L(m,m+1)$ to consist of the empty multiset $\emptyset$ with $c(\emptyset)=1$. Note that we may identify a multiset $\lambda \in L(m,n)$ with the same multiset in $L(m+1,n+1)$. 
\end{definition}

 \begin{lemma}\label{l s exp}
For an integer $m \geq 0$ and an indeterminate $d$,
  \begin{equation} \label{s exp}
  \prod_{i=1}^{m} (d-i+1+\sum_{j=1}^i s_j) = \sum_{i=0}^{m} (d)_{i} \sum_{\lambda \in L(m,i)} c_m(\lambda) \prod_{j=1}^{m} (s_j)_{\mathrm{mult}_j(\lambda)}.  
  \end{equation} 
   \end{lemma}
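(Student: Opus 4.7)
The plan is to prove \eqref{s exp} by induction on $m$, with the base case $m=0$ trivial (both sides equal $1$). The key ingredient is the recursion
\[
c_{m+1}(\lambda) = c_m(\lambda) + \sum_{v\,:\,\mathrm{mult}_v(\lambda) \geq 1} c_m(\lambda \setminus \{v\}),
\]
with the convention $c_m(\mu) = 0$ whenever $|\mu| > m$ or $\mathrm{mult}_j(\mu) > 0$ for some $j > m$. I will establish this by interpreting $c_m(\lambda)$ as the number of arrangements of the multiset $\lambda$ into positions $\{1,\ldots,m\}$ (with $m-|\lambda|$ positions left empty) such that the value at any filled position $k$ is at most $k$, and conditioning on the content of position $m+1$.

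Granting the inductive hypothesis, I multiply both sides of \eqref{s exp} by $(d-m+s_1+\cdots+s_{m+1})$. Writing $S_{m+1} = s_1 + \cdots + s_{m+1}$ and using $(d-m+S_{m+1})(d)_i = (d)_{i+1} + (i-m+S_{m+1})(d)_i$, reindexing shows that the coefficient of $(d)_i$ on the left is
\[
[i \geq 1]\, A_{i-1} + [i \leq m](S_{m+1} + i - m)\, A_i, \qquad A_i := \sum_{\mu \in L(m,i)} c_m(\mu) \prod_{j=1}^m (s_j)_{\mathrm{mult}_j(\mu)}.
\]
The induction thus reduces to showing this equals $B_i := \sum_{\lambda \in L(m+1,i)} c_{m+1}(\lambda) \prod_{j=1}^{m+1} (s_j)_{\mathrm{mult}_j(\lambda)}$ for each $0 \leq i \leq m+1$.

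I will verify this identity by applying the $c_{m+1}$-recursion to $B_i$. The $c_m(\lambda)$ piece yields exactly $A_{i-1}$, since the support convention restricts the summation to $\lambda \in L(m,i-1)$ when nonzero and forces the $i=0$ contribution to vanish. For the $\sum_v c_m(\lambda \setminus \{v\})$ piece I reparametrize $\mu = \lambda \setminus \{v\}$, so that $\mu$ ranges over $L(m,i)$ and $v$ over $[m+1]$; the containment $\mu \cup \{v\} \in L(m+1,i)$ follows directly from the multiplicity characterizations of these sets, and for $i=m+1$ no valid pairs exist, so the piece vanishes. Applying $(s_v)_{k+1} = (s_v - k)(s_v)_k$ simplifies each term to $(s_v - \mathrm{mult}_v(\mu))\prod_{j=1}^m(s_j)_{\mathrm{mult}_j(\mu)}$, and summing over $v \in [m+1]$ produces the factor $\sum_{v=1}^{m+1}(s_v - \mathrm{mult}_v(\mu)) = S_{m+1} - |\mu| = S_{m+1} + i - m$, matching the target.

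The main obstacle is establishing the $c_{m+1}$-recursion cleanly; once in hand, the remaining manipulation is a direct reparametrization coupled with the falling-factorial identity.
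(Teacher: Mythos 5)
Your overall induction scheme---multiplying the inductive hypothesis by $\bigl(d - m + \sum_{j=1}^{m+1} s_j\bigr)$, expanding via $(d)_i(d-i) = (d)_{i+1}$, reindexing, and reducing to the coefficient identity $B_i = [i\geq 1]A_{i-1} + [i\leq m](S_{m+1} + i - m)A_i$---coincides with the paper's proof. Where you diverge is in how the key recursion
\[
c_{m+1}(\lambda) \;=\; c_m(\lambda) + \sum_{v\,:\,\mathrm{mult}_v(\lambda)\geq 1} c_m(\lambda \setminus \{v\})
\]
is established. The paper arrives at the same recursion by comparing coefficients of $\prod_j (s_j)_{\mathrm{mult}_j(\lambda)}$, then proves the resulting identity among products of binomial coefficients by a nested induction on an auxiliary quantity $E(l)$, invoking Pascal's rule $\binom{x+1}{n} = \binom{x}{n} + \binom{x}{n-1}$ at each step. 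You instead give a direct combinatorial interpretation of $c_m(\lambda)$ as counting constrained placements of the multiset $\lambda$ into positions $\{1,\ldots,m\}$ (each filled position $k$ receiving a value $\leq k$), and read off the recursion by conditioning on the content of position $m+1$. Your route sidesteps the paper's inner induction, and the same placement interpretation also justifies cleanly the support conventions you invoke---that $c_m(\lambda)$ vanishes unless $\lambda \in L(m,\cdot)$, and that $\mu \cup \{v\} \in L(m+1,i)$ whenever $\mu \in L(m,i)$ and $v \leq m+1$---points the paper leaves implicit. Both arguments are correct; yours is shorter and more conceptual, while the paper's stays purely formal with binomial identities.
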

 \begin{proof}
  
We use induction on $m$. Equation \eqref{s exp} is true for $m=0$. Assume it is true for some $m \geq 0$. Let $A_{m,i}$ denote
\[
A_{m,i} = \sum_{\lambda \in L(m,i)} c_m(\lambda) \prod_{j=1}^{m} (s_j)_{\mathrm{mult}_j(\lambda)}.  
\] 
Then by the induction hypothesis we have 
\begin{equation} \label{s exp ind}
\prod_{i=1}^{m+1} (d-i+1+\sum_{j=1}^i s_j) = (\sum_{i=0}^m (d)_i A_{m,i}) (d-m+\sum_{j=1}^{m+1}s_j)
\end{equation}
It is sufficient to prove that the right side of the above equation is equal to 
\[
 (\sum_{i=0}^{m+1} (d)_i A_{m+1,i}).
\]
Using 
\[
(d)_i d = (d)_{i+1}+ i (d)_i
\]
we expand the right side of \eqref{s exp ind} and re-index to obtain 
\[
\sum_{i=0}^{m+1} (d)_i (A_{m,i-1}+ A_{m,i}(i-m+\sum_{j=1}^{m+1} s_j)).
\]
Therefore we must prove
\begin{equation} \label{s exp prove}
A_{m+1,i}=A_{m,i-1}+ A_{m,i}(i-m+\sum_{j=1}^{m+1} s_j).
\end{equation}

By definition, for $\lambda \in L(m,i)$, 
\[
\sum_{i=0}^m \mathrm{mult}_i(\lambda) = m-i.
\]
We thus have 
\begin{align*}
A_{m,i}(i-m+\sum_{j=1}^{m+1} s_j) &= \left (\sum_{\lambda \in L(m,i)} c_m(\lambda) \prod_{j=1}^{m} (s_j)_{\mathrm{mult}_j(\lambda)}\right)(i-m+\sum_{j=1}^{m+1} s_j)\\ 
&=  \sum_{\lambda \in L(m,i)} c_m(\lambda)( \prod_{j=1}^{m} (s_j)_{\mathrm{mult}_j(\lambda)})(\sum_{j=1}^{m+1}s_j - \mathrm{mult}_j(\lambda))\\ 
&= \sum_{\lambda \in L(m,i)} c_m(\lambda)( \sum_{k=1}^{m+1}\prod_{j=1}^{m+1} (s_j)_{\mathrm{mult}_j(\lambda)+\delta_{j,k}})
\end{align*}
where $\delta_{j,k}$ denotes the Kronecker delta. Next, given a fixed $\lambda \in L_{m+1,i}$, we compare the coefficients of  

\[
\prod_{j=1}^{m} 
(s_j)_{\mathrm{mult}_j(\lambda)}
\]
in \eqref{s exp prove}. The term $A_{m+1,i}$ contributes 
\[
c_{m+1}(\lambda),
\]
the term $A_{m,i-1}$ contributes 
\[
c_m(\lambda),
\]
and $A_{m,i}(i-m+\sum_{j=1}^{m+1} s_j)$ contributes 
\[
\sum_{i=1}^{m+1}c_m(\lambda - \{i\})
\]
where 
\[
\mathrm{mult}_i(\lambda - \{ i\})  = \mathrm{mult}_i(\lambda) -1
\]
and if 
\[
\mathrm{mult}_i(\lambda - \{ i\}) = -1
\]
then 
\[
c_m(\lambda - \{i\}) = 0. 
\]

Thus we must prove 
\begin{align} \label{s exp l}
\prod_{i=1}^{m+1} {m-i+2  - \sum_{j=i+1}^m \mathrm{mult}_j(\lambda)\choose \mathrm{mult}_i(\lambda)} &= \prod_{i=1}^{m} {m-i+1  - \sum_{j=i+1}^m \mathrm{mult}_j(\lambda)\choose \mathrm{mult}_i(\lambda)} \\ \nonumber
&+ \sum_{k=1}^{m} \prod_{i=1}^{m} {m-i+1  - \sum_{j=i+1}^m (\mathrm{mult}_j (\lambda)- \delta_{j,k})\choose \mathrm{mult}_i(\lambda) - \delta_{i,k})}.
\end{align}
For any integer $0\leq l \leq m$, we claim that the right side of \eqref{s exp l} is equal to $E(l)$, where 

and
\begin{align*}
E(l)  = F(l) G(l) + H(l)
\end{align*}
where 
\begin{align*}
F(l) &=  ( \prod_{i=1}^{l} {m-i+2  - \sum_{j=i+1}^m \mathrm{mult}_j(\lambda)\choose \mathrm{mult}_i(\lambda)})  \\ 
G(l) &= ( \prod_{i=l+1}^{m} {m-i+1  - \sum_{j=i+1}^m \mathrm{mult}_j(\lambda)\choose \mathrm{mult}_i(\lambda)}) \\ 
H(l) &= \sum_{k=l+1}^{m} \prod_{i=1}^{m} {m-i+1  - \sum_{j=i+1}^m (\mathrm{mult}_j (\lambda)- \delta_{j,k})\choose \mathrm{mult}_i(\lambda) - \delta_{i,k})}.
\end{align*}

We use induction on $l$. The claim is true for $l=0$. Assume it is true for some $l \geq 0$. Then 
\begin{align}
E(l)& =F(l) {m-l  - \sum_{j=l+2}^m \mathrm{mult}_j(\lambda)\choose \mathrm{mult}_{l+1}(\lambda)}   G(l+1)\label{term 1} \\ 
&+F(l) {m-l - \sum_{j=l+2}^m (\mathrm{mult}_j (\lambda))\choose \mathrm{mult}_{l+1}(\lambda) - 1)} G(l+1)\label{term 2}  \\ 
&+H(l+1)\nonumber
\end{align}

Adding the terms at lines \eqref{term 1} and \eqref{term 2}  using the identity 
\[
{x+1 \choose n} = {x \choose n}+ {x \choose n-1}
\]
yields $E(l+1)$. This completes the proof.
 \end{proof}
\begin{definition} 
Let $P(r,k)$ denote the set  of subsets of $[r]$ of size $k$. Let $x_i, 1 \leq i \leq r$ be a set of indeterminates. Given a subset $U \in P(r,k)$ with 
\[
U = (U(1), \ldots, U(k)),
\]
 let 
\[
x(U) = \prod_{i =1}^k (x_{U(i)}-i+1)
\]
\end{definition} 

\begin{lemma} \label{l s exp 2}
For an integer $r\geq 0$,
\begin{equation} \label{s exp 2}
\prod_{i=1}^r (y+x_i-i+1) = \sum_{i=0}^r (y)_i \sum_{U \in P(r,r-i)} x(U).
\end{equation}
\end{lemma}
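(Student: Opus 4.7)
\textbf{Proof plan for Lemma \ref{l s exp 2}.} The plan is to prove the identity by induction on $r$, with the inductive step driven by the standard falling-factorial identity $(y)_i \cdot y = (y)_{i+1} + i(y)_i$, combined with a Pascal-style splitting of subsets $U' \in P(r+1,\cdot)$ according to whether $r+1 \in U'$.

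The base case $r=0$ is trivial: both sides equal $1$ (the empty product on the left, and $(y)_0 \, x(\emptyset) = 1$ on the right). For the inductive step, assume the identity at level $r$. Since the $(r+1)$-st factor of the left-hand side is $y + x_{r+1} - (r+1) + 1 = y + x_{r+1} - r$, I will multiply the inductive hypothesis by this factor and show the result equals the right-hand side at level $r+1$.

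The key combinatorial step is the decomposition of the right-hand side at level $r+1$. For each $U' \in P(r+1, r+1-i)$, listed in increasing order, I split into two cases:
\begin{itemize}
\item If $r+1 \in U'$, write $U' = U \cup \{r+1\}$ with $U \in P(r, r-i)$. Since $r+1$ is the last element in the increasing listing, its position is $r+1-i$, so
\[
x(U') = x(U) \cdot \bigl(x_{r+1} - (r+1-i) + 1\bigr) = x(U)\,(x_{r+1} - r + i).
\]
\item If $r+1 \notin U'$, then $U' \in P(r, r+1-i) = P(r, r-(i-1))$, and $x(U')$ is unchanged.
\end{itemize}
After re-indexing the second family by $i' = i - 1$, the right-hand side at level $r+1$ becomes
\[
\sum_{i=0}^{r} \sum_{U \in P(r, r-i)} x(U)\,\Bigl[(x_{r+1} - r + i)(y)_i + (y)_{i+1}\Bigr].
\]

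To finish, I use $(y)_i(y - i) = (y)_{i+1}$, equivalently $(y)_i \cdot y = (y)_{i+1} + i(y)_i$, to rewrite
\[
(y)_i\,(y + x_{r+1} - r) = (y)_{i+1} + (x_{r+1} - r + i)(y)_i.
\]
Substituting this identity back shows the displayed sum equals $(y + x_{r+1} - r)$ times the right-hand side of \eqref{s exp 2} at level $r$, which by the inductive hypothesis equals $(y + x_{r+1} - r) \prod_{i=1}^{r}(y + x_i - i + 1) = \prod_{i=1}^{r+1}(y + x_i - i + 1)$, completing the induction.

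The only subtle bookkeeping is at the boundary indices $i=0$ and $i=r+1$: when $i = r+1$, only the case $r+1 \notin U'$ contributes (giving $U' = \emptyset$), and when $i = 0$, only the case $r+1 \in U'$ contributes (giving $U' = [r+1]$). Both degenerate cases are automatically handled by the convention $P(r,k) = \emptyset$ for $k < 0$ or $k > r$, so no special treatment is needed. The main obstacle is thus really just verifying that the ``last factor'' of $x(U')$ is precisely $x_{r+1} - r + i$ when $r+1 \in U'$, which matches the coefficient appearing in the two-term expansion of $(y)_i(y + x_{r+1} - r)$ — a clean alignment that makes the induction go through.
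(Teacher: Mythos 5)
Your proposal is correct and follows essentially the same route as the paper: both proofs induct on $r$, both use the falling-factorial recursion $(y)_i\,(y+x_{r+1}-r) = (y)_{i+1} + (x_{r+1}-r+i)(y)_i$, and both match the two pieces of that expansion to the Pascal-style split of $P(r+1,\cdot)$ according to membership of $r+1$, with identical bookkeeping for $x(U')$. The only difference is direction of travel: you decompose the right-hand side at level $r+1$ down to $(y+x_{r+1}-r)$ times the level-$r$ sum, whereas the paper multiplies the inductive hypothesis by the extra factor and regroups the resulting products by the index of $(y)_i$; these are the same calculation read in opposite directions.
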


\begin{proof}
 We use induction on $r$. Equation \eqref{s exp 2} is true for $r=0$. Assume it is true for some $r \geq 0$. Then 
 \[
 \prod_{i=1}^{r+1} (y+x_i-i+1)  = (\sum_{i=0}^r (y)_i \sum_{U \in P(r,r-i)} x(U)) (y+x_{r+1}-r)
 \]
 Now for $0 \leq i \leq r$
 \begin{align*}
&((y)_i \sum_{U \in P(r,r-i)} x(U)) (y - i+ x_{r+1}-(r-i))= (y)_{i+1} \sum_{U \in P(r,r-i)} x(U) + (y)_i \sum_{U \in P(r,r-i)} x(U) (x_{r+1}-(r-i))\\ 
 &= (y)_{i+1} \sum_{\substack{U \in P(r+1,r+1-(i+1))\\ r+1 \notin U}} x(U) + (y)_i \sum_{\substack{U \in P(r+1,r+1-i) \\ r+1 \in U}} x(U) 
 \end{align*}
 Thus the combined coefficient of $(y)_i$ is 
 \[
\sum_{U \in P(r+1,r+1-i) } x(U). 
 \]
 This completes the proof.
\end{proof}

\subsection{An encoding of simple directed graphs}

\begin{remark} \label{bijection}
We construct a bijection between DSG($d$) and a sequence of ``size-bounded" subsets of $[1,d]$ defined in Definition \ref{d sb}. This bijection yields an injection of the terms on the right side of \eqref{simple graph gen} into the left side. The remaining terms on the left side then cancel. 
\end{remark}

\begin{definition} \label{d sb}
For an integer $r \geq 1$, let $\sigma$ denote a sequence of subsets of $[r]$
\[
\sigma = (\sigma(1) \ldots \sigma(k)).
\]
We say that $\sigma$ is \emph{size-bounded} if $|\sigma(i)|\leq i$ for each $i$. Let $\mathcal{S}(k,r)$ denote the set of of such size-bounded sequences of length $k$. For an integer $j$, define the integer $l_j(\sigma)$ to be 
0 if $j \notin \sigma(i)$ for all $i$, and otherwise to be the smallest $i$ such that $j \in \sigma(i)$. For any set $U$ and element $x$, let $U - \{x\}$ denote the usual set subtraction, where 
\[
U - \{x\} = U
\] 
if $x \notin U$. 

Define the mapping $R_j$ 
\[
R_j : \mathcal{S}(k,r) \rightarrow \mathcal{S}(k-1,r)
\]
by 
\[ 
R_j(\sigma) = \begin{cases} 
(\sigma(1) \ldots, \sigma(k-1)) \text{ if $l_j(\sigma) = 0$ } \\
(\sigma(1)\ldots \sigma(l_j(\sigma) -1), \sigma(l_j(\sigma) +1) - \{ j\}, \ldots,  \sigma(k) - \{ j\}) \text{ otherwise }.
\end{cases}
\]
 Define the subset $\mathcal{B}(k,r) \subset \mathcal{S}(k,r)$ to consist of those $\sigma$ such that 
\[
R_i \circ R_{i+1} \circ \ldots \circ R_{r} (\sigma)
\] 
is size-bounded for each $i, 1 \leq i \leq r$. Define both $\mathcal{B}(0,1)$ and $\mathcal{S}(0,1)$ to consist of the empty sequence. 

Let $t$ denote a sequence of nonnegative integers 
\[
t = (t(1), \ldots, t(r)).
\]
Let $T(r)$ denote the set of such sequences of length $r$. Let $\rho_1$ be the mapping 
\[
\rho_1:  \mathcal{S}(k,r) \rightarrow T(r)
\] 
defined by 
\[
\rho_1(\sigma)(j) = \sum_{i=1}^k  \emph{\textbf{1}}(j \in \sigma(i))
\]
 \end{definition} 

\begin{definition} 
For an integer $r \geq 0$, let $A(r)$ denote the set of adjacency matrices of directed graphs on $r$ vertices that have no multiples edges and no 1- or 2-cycles. For example, if there is an edge from vertex $i$ to vertex $j$ in graph $G$, then the adjacency matrix $M$ of $G$ has $M_{i,j}=1$ and $M_{j,i}=0$. Let $\rho_2$ to be the mapping  
\[
\rho_2: A(r) \rightarrow \mathcal{T}(r)
\]
defined by 
\[
\rho_2(M)(j) = \sum_{i=1}^r M_{i,j}.
\]
\end{definition}

\begin{theorem} 
For an integer $r \geq 1$, there is a bijection $f$ 
\[
f:  A(r) \rightarrow \mathcal{B}(r-1,r) 
\]
such that 
\[
\rho_2(M) = \rho_1(f(M)).
\]
\end{theorem}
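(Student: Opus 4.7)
The plan is to prove the theorem by induction on $r$, with the bijection $f$ constructed recursively in parallel with a matching analysis of the ``remove vertex $r$'' operations on both sides. The base case $r = 1$ is immediate, pairing the empty $1 \times 1$ adjacency matrix with the empty sequence. For the inductive step, the first key observation is a structural decomposition of $\mathcal{B}(r-1, r)$: an element $\sigma \in \mathcal{S}(r-1, r)$ lies in $\mathcal{B}(r-1, r)$ if and only if $R_r(\sigma)$, viewed (after identifying subsets of $[r-1]$ with subsets of $[r]$ not containing $r$) as an element of $\mathcal{S}(r-2, r-1)$, lies in $\mathcal{B}(r-2, r-1)$. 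This follows because for $i < r$ one has $R_i \circ \cdots \circ R_r(\sigma) = R_i \circ \cdots \circ R_{r-1}(R_r(\sigma))$, so the iterated conditions defining $\mathcal{B}(r-1, r)$ reduce, via $R_r$, to those defining $\mathcal{B}(r-2, r-1)$. Combined with the inductive bijection $f \colon A(r-1) \to \mathcal{B}(r-2, r-1)$, extending $f$ to $A(r)$ reduces to matching the fibers of $R_r$ with the fibers of the map $A(r) \to A(r-1)$ sending a matrix to its principal submatrix obtained by deleting vertex $r$.

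Both fibers have size exactly $3^{r-1}$: on the matrix side because each $j \in [r-1]$ admits three mutually exclusive states for the pair $(j, r)$ (no edge, $j \to r$, or $r \to j$); on the sequence side because an element of $R_r^{-1}(\sigma')$ is determined by a triple $(l, S, T)$, where $l \in \{0, 1, \ldots, r-1\}$ is the position of the first block of $\sigma$ containing $r$ (with $l = 0$ indicating that $r$ is absent from $\sigma$), $S \subseteq [r-1]$ is either the set of elements sharing the newly inserted block with $r$ (when $l \geq 1$, with $|S| \leq l - 1$) or the new last block appended to $\sigma'$ (when $l = 0$), and $T \subseteq \{l+1, \ldots, r-1\}$ records which later blocks receive $r$ (with $T = \emptyset$ when $l = 0$). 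A hockey-stick computation gives $2^{r-1} + \sum_{l=1}^{r-1} 2^{r-1-l} \sum_{k=0}^{l-1} \binom{r-1}{k} = 3^{r-1}$. I would define $f$ on a fiber by setting $S = O_r$ (the out-neighborhood of $r$) and, when $l \geq 1$, matching $\{l\} \cup T$ with $I_r$ (the in-neighborhood) via the order-preserving identification of $[r-1] \setminus O_r$ with $\{|O_r|+1, \ldots, r-1\}$. By construction, the set of positions of $\sigma$ containing $r$ has size $|I_r| = \rho_2(M)(r)$, and for each $j \in [r-1]$ the only new appearance of $j$ over $\sigma'$ is the indicator $[j \in S] = [j \in O_r]$, matching the new in-edge $[r \to j]$; combined with the inductive identity $\rho_1(\sigma') = \rho_2(M')$, this gives $\rho_1(\sigma) = \rho_2(M)$.

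The main obstacle is the fiber-level matching itself: verifying that the constraints on $(l, S, T)$, namely $|S| \leq l - 1$, $T \subseteq \{l+1, \ldots, r-1\}$, and the admissible range of $l$, translate under the proposed order-preserving identification exactly into the disjointness $I_r \cap O_r = \emptyset$. The combinatorial backbone is the hockey-stick identity $\sum_{l = |O_r|+1}^{r - |I_r|} \binom{r-1-l}{|I_r|-1} = \binom{r-1-|O_r|}{|I_r|}$, which counts the valid $(l, T)$-shapes by the $|I_r|$-subsets of $[r-1] \setminus O_r$. Once this canonical identification is shown to be well defined, independent of auxiliary choices, and compatible with the inductive structure, bijectivity of $f$ and the degree-preservation equation $\rho_1 \circ f = \rho_2$ follow directly from the parametrization.
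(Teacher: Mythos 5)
Your approach is essentially the one the paper takes: induct on the number of vertices, and match the fiber of the vertex-deletion map $A(r) \to A(r-1)$ with the fiber of $R_r$ over $\mathcal{B}(r-2,r-1)$ by encoding the out-neighborhood of the new vertex as the newly inserted block and the in-neighborhood as the positions (at and after the insertion point) that pick up the new vertex. What you flag as the ``main obstacle''---verifying that the constraints on $(l,S,T)$ translate exactly into $I_r \cap O_r = \emptyset$ and that the identification is well defined---is precisely what the paper carries out by an explicit case split: no in-edges (append $U = O_{r+1}$ as the last block), versus some in-edges (insert $U \cup \{r+1\}$ at position $|U| + i_1$, where $i_1$ is determined through the increasing map $g$ from $[r-|U|]$ to $[r]\setminus U$, i.e.\ your order-preserving identification, and add $r+1$ to the later blocks indexed by the remaining in-edges). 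The paper then checks $\rho_1(\sigma') = \rho_2(M)$, $R_{r+1}(\sigma')=\sigma$, and size-boundedness directly, and gives the explicit inverse (read off the bottom row of $M$ from $\sigma(i_1)$ or $\sigma(r)$, and the rightmost column from which later blocks contain $r+1$). Your observation that both fibers have cardinality $3^{r-1}$, and the accompanying hockey-stick computation, are a reasonable consistency check not present in the paper, but are dispensable once the explicit fiber bijection is written down; as submitted your argument stops short of that explicit construction, which is where the actual content lies.
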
 
\begin{proof}
We define $f$ inductively. For $r=1$, there is only one such adjacency matrix $M = [ 0 ]$, and $f(M)$ is the empty sequence. 

For an $r \geq 1$, assume $f$ is defined on $A(r)$. Let $M \in A(r+1)$. Let $M_0$ denote the $r\times r$ submatrix of $M$ obtained by taking the first $r$ rows and $r$ columns of $M$. By the induction hypothesis, $f(M_0)$ is defined; let $\sigma = f(M_0)$. Define the subset $U \in [r]$ by  
\[
U = \{j : M_{r+1,j}=1  \}.
\]

Suppose $M_{i,r+1}=0$ for all $i$. Then define $\sigma'$ by 
\[
 \sigma'(i) = 
 \begin{cases} 
 \sigma(i)  \text{ if $i \leq r-1$} \\
 U \text{ if $i = r$} 
 \end{cases}
 \]
 and define $f(M) = \sigma'$. 
 
Suppose there are $n\geq 1$ 1's in the $(r+1)$-column of $M$. Let $V$ be the sequence 
\[
v = ( v(1), \dots, v(r -|U| ))
\]
such that $v(i) =M_{g(i), r+1}$ where $g(i)$ is the unique increasing function from $[r-|U|]$ to $[r] - \{ U\}$. 
Let $i_1$ be the smallest $i$ such that $v(i)=1$. 
For $1 \leq i \leq r$ define $\sigma'$ by 
\[
\sigma'(i) =\begin{cases} 
\sigma(i) \text{ if $i < |U|+i_1$ } \\ 
U \cup \{r+1\} \text{ if $i = |U|+i_1$ } \\
\sigma(i-1) \text{ if $i > |U|+i_1$ and $v(i-|U|)=0$ } \\ 
\sigma(i-1) \cup \{ r+1\} \text{ if $i > i_1$ and $v(i-|U|)=1$ }
\end{cases} 
\]
 Then define $f(M) = \sigma'$. It is straightforward to check that $\rho_1(\sigma') = \rho_2(M)$. By construction  $R_{r+1}(\sigma') = \sigma$, and by the induction hypothesis  $\sigma \in \mathcal{B}(r-1,r)$, so $\sigma' \in \mathcal{B}(r,r+1) $. 
 
We prove that $f$ is a bijection also by induction. It is a bijection for $r=1$. Assume it is for some $r \geq 1$. Given $\sigma \in \mathcal{B}(r,r+1)$ we can construct the bottom row and rightmost column of the adjacent matrix as follows. If there is no $i$ such that $r+1 \in \sigma(i)$, then the bottom row is determined by $\sigma(r)$, and the rightmost column consists of all 0's. Otherwise the bottom row is determined by $\sigma(i_1)$, where $i_1$ is the smallest $i$ such that $r+1 \in \sigma(i)$, and the rightmost column is determined by the indices $i$ of those $\sigma(i)$ that contain the remaining $(r+1)$'s. The upper left $r \times r$ submatrix is then determined inductively. This completes the proof.
\end{proof}

\section{Further Work} 
 \begin{enumerate} [label = $\bullet$]
 
\item Extend the bijection of Remark \ref{bijection} to the remaining terms. 

\item Find analogues of the factored formulas of \cite{DeFranco 2} for formal zeros of multi variable polynomials. 

\item Find NRS-like algorithms for the formal zeros of \cite{DeFranco 2} faith integer $\beta > 1$. 

\end{enumerate}

\end{document}